\newtheorem{theorem}{Theorem}[section]
\newtheorem{lemma}[theorem]{Lemma}
\theoremstyle{definition}
\newtheorem{definition}[theorem]{Definition}
\theoremstyle{remark}
\newtheorem{remark}[theorem]{Remark}
\numberwithin{equation}{section}
\theoremstyle{plain}
\newtheorem{condition}{Condition}
\newtheorem{corollary}{Corollary}
\newtheorem{proposition}{Proposition}
\begin{document}
\title[High-Frequency Tail Index Estimation by Nearly Tight Frames]{%
High-Frequency Tail Index Estimation by Nearly Tight Frames}
\author{Claudio Durastanti}
\thanks{ Research partially supported by ERC Grant n. 277742 Pascal}
\address[C. Durastanti]{ University of Rome "Tor Vergata"}
\email[C. Durastanti]{ durastan@mat.uniroma2.it}
\author{Xiaohong Lan}
\address[X. Lan]{ University of Science and Technology of China (Hefei,
Anhui, China, 230026)}
\email[X. Lan]{ xhlan@ustc.edu.cn}
\subjclass[2010]{Primary 62M15, 62M30; Secondary 60G60, 42C40}
\date{Febbruary 2, 2013}
\keywords{Spherical Random Fields, high frequency asymptotics, Whittle
likelihood, mexican needlets, parametric and semiparametric estimates }

\begin{abstract}
This work develops the asymptotic properties (weak consistency and
Gaussianity), in the high-frequency limit, of approximate maximum likelihood
estimators for the spectral parameters of Gaussian and isotropic spherical
random fields. The procedure we used exploits the so-called mexican needlet
construction by Geller and Mayeli in \cite{gm2}. Furthermore, we propose a
plug-in procedure to optimize the precision of the estimators in terms of
asymptotic variance.
\end{abstract}

\maketitle

\section{Introduction}

The aim of this paper is to investigate the asymptotic behaviour of a
Whittle-like approximate maximum likelihood estimates of the spectral
parameters (e.g., the \emph{spectral index}) of isotropic Gaussian random
fields defined on the unit sphere $\mathbb{S}^{2}$. We employ a procedure
based on the so-called mexican needlet construction by Geller and Mayeli in 
\cite{gm2}. Furthermore, we develop a plug-in procedure aimed to merge and
to optimize these results with the achievements pursued in \cite{dlm}, \cite%
{dlm2}, see also \cite{durastanti}, where the asymptotic behaviour of
Whittle-like estimates were studied respectively in the harmonic and
standard needlet analysis frameworks.

Under the hypothesis of Gaussianity, fixing smoothness conditions on the
behaviour of the angular power spectrum, we pursue weak consistency and
central limit theorem allowing for feasible inference. From the technical
point of view, the asymptotic framework we use here is rather different from
the usual, being based on observations collected at higher and higher
frequencies on a fixed-domain (i. e. the unit sphere). In this sense, this
work can be related to the area of fixed-domain asymptotics (see for
instance \cite{anderes}, \cite{loh}); on the other hand, as for \cite{dlm}
and \cite{dlm2}, some of the techniques used here are close to those adopted
by \cite{Robinson} to analyze the asymptotic behaviour of the semiparametric
estimates of the long memory parameter for the covariance of stationary
processes. In terms of the angular power spectrum, we shall also focus on
semiparametric models where only the high-frequency/small-scale behaviour of
the random field is constrained. In particular, we consider both full-band
and narrow-band estimates, where the latter allow unbiased estimation under
more general assumption, by paying the price of a slower rate of convergence
if compared to the former.

This investigation, as many others regarding statistical inference on
spherical random fields, is strongly motivated by practical applications,
especially in cosmology and astrophysics (see for instance \cite{marpecbook}
and the references therein). For instance, as described in \cite{dode2004}
and \cite{cama}, satellite missions such as \emph{WMAP} and \emph{Planck}
are now providing huge datasets on Cosmic Microwave Background (CMB)
radiation, usually assumed to be a realization of an isotropic, Gaussian
spherical random field: the issues concerning parameter estimation have been
considered by many applied papers (see \cite{hamann}, \cite{Larson} for a
review), but in our knowledge, until now, rigorous asymptotic results are
still missing in literature. We however refer also to \cite{bkmpAoS}, \cite%
{fay08}, \cite{glm}, \cite{pbm06}, \cite{pietrobon1}, \cite{marpec2} for
further theoretical and applied results on angular power spectrum estimation
in nonparametric settings, and to \cite{kerkyphampic}, \cite{kim}, \cite%
{kimkoo}, \cite{kookim}, \cite{leonenko2}, \cite{ejslan} and \cite%
{marpecbook} for further results on statistical inference for spherical
random fields or wavelets applied to CMB radiation.

Another result we work out in this paper concerns the formulation of a
plug-in procedure which combines the application of the asymptotic results
here attained with those described in \cite{dlm} and \cite{dlm2}, where the
authors proved that weak consistency and central limit theorem can be
achieved respectively by standard Fourier and standard spherical needlet
analysis. In \cite{dlm}, the authors themselves have put in evidence that,
if the asymptotic achievements are better with respect to those obtain in
needlet framework in terms of precision of the estimates (e.g. their
asymptotic variance is smaller), in many practical circumstances the
implementation of spherical harmonics estimates may present some
difficulties, due to their lack of localization in real space. The presence
of unobserved regions on the sphere (common situation in the case of
Cosmological applications), can indeed make their implementation infeasible,
and spherical harmonics exclude the possibility of separate estimation on
different hemispheres, as considered for instance by \cite{bkmpBer}, \cite%
{pietrobon2}. In view of these issues, in \cite{dlm2}, the authors
investigated the Whittle-like procedures to a spherical wavelet framework,
in order to exploit the double-localization properties (in real and harmonic
space) of such constructions, at the cost of a smaller precision in term of
convergence in law of the estimates. They focussed their attention on
spherical needlets, second-generation wavelets on the sphere, introduced in
2006 by \cite{npw1} and \cite{npw2}, and very extensively exploited both in
the statistical literature and for astrophysical applications in the last
few years: for instance, their stochastic properties are developed in \cite%
{bkmpAoS}, \cite{bkmpBer}, \cite{bkmpAoSb} \cite{ejslan}, \cite{spalan} and 
\cite{mayeli}. More recently, needlets have been generalized in different
ways: we cite spin needlets (see \cite{gelmar}), and mixed needlets (cfr. 
\cite{gelmar2010}), which represent the natural generalization to the case
of spin fiber bundles, again developed in view of Cosmological applications
such as weak gravitational lensing and the polarization of the Cosmic
Microwave Background (CMB) radiation (see for instance \cite{bkmpAoS}, \cite%
{cama}, \cite{dmg}, \cite{fay08}, \cite{glm}, \cite{ghmkp}, \cite{mpbb08}, 
\cite{pietrobon1}, \cite{pietrobon2}, \cite{rudjord2}). On the other hand,
needlets have been generalized to an unbounded support in the frequency
domain by \cite{gm1}, \cite{gm2} and \cite{gm3}, the so-called Mexican
needlets. In this case, as we will describe in details below, even if the
support in frequency domain is unbounded, the form of the weight function,
depending on the scale parameter $p$, is such that for each wavelet there is
a small numbers of frequencies which give a contribution substantially far
from zero , while in the real domain the same weight function allows a
closer localization than the one related to standard spherical needlets. In
particular this double localization depends on the value of $p$ or, better,
on its distance from the spectral index, allowing these estimates to be more
efficient than the ones obtained with standard needlets. Our idea,
therefore, is to build a plug-in procedure on two steps, the first step
being to estimate approximately the value of the spectral index by standard
needlets and the second step providing a estimation with mexican needlets,
whereas the value of the scale parameter $p$ will allow a more efficent
estimator.

The plan of the paper is as follows: in Section \ref{sec: randomfields}, we
will recall some background material on mexican needlet analysis for
spherical isotropic random fields; in Section \ref{sec: needwhittle} we will
introduce and describe the Whittle-like minimum contrast estimators, while
in Section \ref{asympt} we shall establish the asymptotic results on these
estimators.\ In Section \ref{narrow} we present results on narrow band
estimates, while in Section\ \ref{plugin} we will describe the plug-in
procedure mentioned above. Finally, the appendix collects some analytical
and statistical auxiliary results.

\section{Random fields and mexican needlets\label{sec: randomfields}}

In this Section we will introduce the mexican needlet framework (for more
details, cfr. \cite{gm2}) and its application to the study of the isotropic,
Gaussian random fields on the sphere. First of all, consider the set of
spherical harmonics $\left\{ Y_{lm}:l\geq 0,m=-l,...,l\right\} $. As
well-known, it represents an orthonormal basis for the class of
square-integrable functions on the unit sphere space $L^{2}\left( \mathbb{S}%
^{2}\right) $: the spherical harmonics are defined as the eigenfunctions of
the spherical Laplacian $\Delta _{S^{2}}$ corresponding to eigenvalues $%
-l(l+1)$ (see, for more details and analytic expressions, \cite{adler} \cite%
{steinweiss}, \cite{VMK}, \cite{marpecbook} and, for extensions, \cite{leosa}%
, \cite{mal}). The mexican needlets are defined in \cite{gm2} as%
\begin{equation}
\psi _{jk;p}\left( x\right) :=\sqrt{\lambda _{jk}}\sum_{l\geq 1}f_{p}\left( 
\frac{l}{B^{j}}\right) \sum_{m=-l}^{l}\overline{Y}_{lm}\left( x\right)
Y_{lm}\left( \xi _{jk}\right) \text{ ,}  \label{mexneed}
\end{equation}%
where%
\begin{equation}
f_{p}\left( x\right) =x^{2p}\exp \left( -x^{2}\right) \text{ .}
\label{weightfunc}
\end{equation}%
Observe that $\left\{ \xi _{jk}\right\} $ is a set of cubature points on the
sphere, indexed by resolution level index $j$ and the cardinality of the
point over the fixed resolution level $k$, while $\lambda _{jk}>0$
corresponds to the weight associated to any $\xi _{jk}$. The scalar $N_{j}$
denotes the number of cubature points for a given level $j$ (cfr. \cite{npw1}%
, \cite{npw2}, see also e.g. \cite{gm2} and \cite{marpecbook}), chosen to
satisfy the following 
\begin{equation}
\lambda _{jk}\approx B^{-2j}\text{ },\text{ }N_{j}\approx B^{2j}\text{ ,}
\label{Njdef}
\end{equation}%
where by $a\approx b$, we mean that there exists $c_{1},c_{2}>0$ such that $%
c_{1}a\leq b\leq c_{2}a$. Below, we shall assume for notational simplicity,
as in \cite{dlm2},\ that there exists a positive constant $c_{B}$ such that $%
N_{j}=c_{B}B^{2j}$ for all resolution levels $j$. In practice, cubature
points and weights can be identified with those evaluated by common packages
such as HealPix (see for instance \cite{bkmpAoS}, \cite{GLESP}, \cite%
{HEALPIX}).

Considering $L_{l}(\left\langle x,y\right\rangle )=\sum_{m=-l}^{l}\overline{Y%
}_{lm}\left( x\right) Y_{lm}\left( y\right) $ as a projection operator, the
definition (\ref{mexneed}) corresponds to a weighted convolution with a
weight function (\ref{weightfunc}): mexican needlets can be considered as an
extension of the spherical standard needlets, proposed in \cite{npw1}, \cite%
{npw2}, see also \cite{bkmpAoSb}, \cite{dlm}, \cite{marpecbook}. The main
difference between these two kinds of wavelets concerns their dependence on
frequencies. In fact while standard needlets have a compact frequency
support (see again \cite{npw1}, \cite{npw2}), each mexican needlet is
defined on the whole frequency range. In \cite{gm2}, mexican needlets are
proved to form a nearly tight frame, differently from the standard needlets
which describe a tight frame and, as consequence, are characterized by an
exact reconstruction formula (see again \cite{npw1}).

Consider now a zero-mean, isotropic Gaussian random fields $T:\mathbb{S}%
^{2}\times \Omega \rightarrow \mathbb{R}$; it is a well known fact that for
every $g\in SO\left( 3\right) $ and $x\in \mathbb{S}^{2}$, a field $T\left(
\cdot \right) $ is isotropic if and only if%
\begin{equation*}
T\left( x\right) \overset{d}{=}T\left( gx\right) \text{ ,}
\end{equation*}%
where the equality holds in the sense of processes (see \cite{marpec2}, \cite%
{marpecbook}), and that (see e.g. \cite{marpecbook}) the following spectral
representation holds: 
\begin{equation*}
T\left( x\right) =\sum_{l\geq 0}\sum_{m=-l}^{l}a_{lm}Y_{lm}\left( x\right) 
\text{ , \ }a_{lm}=\int_{\mathbb{S}^{2}}T\left( x\right) \overline{Y}%
_{lm}\left( x\right) dx\text{ .}
\end{equation*}%
Note that this equality holds in both $L^{2}\left( \mathbb{S}^{2}\times
\Omega ,dx\otimes \mathbb{P}\right) $ and $L^{2}\left( \mathbb{P}\right) $
senses for every fixed $x\in \mathbb{S}^{2}$. For an isotropic Gaussian
field, the spherical harmonics coefficients $a_{lm}$ are Gaussian complex
random variables such that%
\begin{equation*}
\mathbb{E}\left( a_{lm}\right) =0\text{ , }\mathbb{E}\left( a_{lm}\overline{a%
}_{l_{1}m_{1}}\right) =\delta _{l}^{l_{1}}\delta _{m}^{m_{1}}C_{l}\text{ .}
\end{equation*}%
The angular power spectrum $\left\{ C_{l}\text{ , }l=1,2,3,...\right\} $
fully characterizes the dependence structure under Gaussianity. Properties
of the spherical harmonics coefficients under Gaussianity and isotropy are
discussed for instance by \cite{bm}, \cite{marpecbook}; here we recall that 
\begin{equation*}
\sum_{m=-l}^{l}\left\vert a_{lm}\right\vert ^{2}\sim C_{l}\times \chi
_{2l+1}^{2}\text{ .}
\end{equation*}%
Hence, given a realization of the random field, an estimator of the angular
power spectrum can be defined as:%
\begin{equation*}
\widehat{C}_{l}=\frac{1}{2l+1}\sum_{m=-l}^{l}\left\vert a_{lm}\right\vert
^{2}\text{ ,}
\end{equation*}%
the empirical angular power spectrum. It is immediately observed that%
\begin{equation}
\mathbb{E}\left( \widehat{C}_{l}\right) =\frac{1}{2l+1}%
\sum_{m=-l}^{l}C_{l}=C_{l}\text{ , }Var\left( \frac{\widehat{C}_{l}}{C_{l}}%
\right) =\frac{2}{2l+1}\rightarrow 0\text{ for }l\rightarrow +\infty \text{ .%
}  \label{powest2}
\end{equation}

As in \cite{dlm2}, we introduce the following regularity condition on the
angular power spectrum:

\begin{condition}[Regularity]
\label{REGULNEED0}The random field $T(x)$ is Gaussian and isotropic with
angular power spectrum $C_{l}$ so that for all $B>1$, there exist $\alpha
_{0}>2$, $c_{0}>0$ such that:%
\begin{equation}
C_{l}=l^{-\alpha _{0}}G\left( l\right) >0,\text{ for all }l\in \mathbb{N}%
\text{ ,}  \label{Cl-reg1}
\end{equation}%
where $c_{0}^{-1}\leq G\left( l\right) \leq c_{0}$\ for all $l\in N$ , and
for every $r\in \mathbb{N}$, there exists $c_{r}>0$ such that:%
\begin{equation*}
\left\vert \frac{d^{r}}{du^{r}}G\left( u\right) \right\vert \leq c_{r}u^{-r}%
\text{ , }\in \left( 0,+\infty \right) \text{ .}
\end{equation*}
\end{condition}

This assumption is fulfilled by popular physical models, for instance in a
CMB framework the \emph{Sachs-Wolfe} power spectrum, which is the leading
model for fluctuations of the primordial gravitational potential, takes the
form (\ref{Cl-reg1}), see for instance \cite{dode2004}.

First of all, we stress that Condition \ref{REGULNEED0} implies the
following Condition \ref{condspalan}, given in \cite{spalan}.

\begin{condition}
\label{condspalan}Condition \ref{REGULNEED0} holds and, moreover, there
exist $\alpha _{0}>2$ and a sequence of functions $\left\{ g_{j}\left( \cdot
\right) \right\} _{j=1,2,...}$ such that:%
\begin{equation}
C_{l}=l^{-\alpha _{0}}g_{j}\left( \frac{l}{B^{j}}\right) >0\text{, for all }%
B^{j-1}<l<B^{j+1}\text{, }j=1,2...  \label{Cl-reg2}
\end{equation}%
where $c_{0}^{-1}\leq g_{j}\leq c_{0}$\ for all $j\in N$ , and for every $%
r=0,...,Q$, $Q\in \mathbb{N}$, there exists $c_{r}>0$ such that:%
\begin{equation*}
\sup_{j}\sup_{B^{j-1}<u<B^{j+1}}\left\vert \frac{d^{r}}{du^{r}}g_{j}\left(
u\right) \right\vert \leq c_{r}.
\end{equation*}
\end{condition}

As an example, consider%
\begin{equation*}
C_{l}=l^{-\alpha }\frac{P\left( l\right) }{Q\left( l\right) }\text{ ,}
\end{equation*}%
where $P\left( l\right) =\sum_{i=1}^{p}c_{p,i}l^{i}$ and $Q\left( l\right)
=\sum_{i=1}^{q}c_{q,i}l^{i}$ are positive polynomials of degree $p$ and $q$
respectively, so that $\alpha _{0}=\alpha -p+q>2$., so that%
\begin{eqnarray*}
C_{l} &=&l^{-\alpha +p-q}\frac{c_{p,p}}{c_{q,q}}\frac{1+\frac{c_{p,p-1}}{%
c_{p,p}}\frac{1}{l}+\frac{c_{p,p-2}}{c_{p,p}}\frac{1}{l^{2}}+...}{1+\frac{%
c_{q,q-1}}{c_{q,q}}\frac{1}{l}+\frac{c_{q,q-2}}{c_{q,q}}\frac{1}{l^{2}}+...}
\\
&=&l^{-\alpha +p-q}\frac{c_{p,p}}{c_{q,q}}\frac{1+\frac{1}{Bj}\frac{c_{p,p-1}%
}{c_{p,p}}\frac{B^{j}}{l}+\frac{1}{B^{2j}}\frac{c_{p,p-2}}{c_{p,p}}\left( 
\frac{B^{j}}{l}\right) ^{2}+......}{1+\frac{1}{Bj}\frac{c_{q,q-1}}{c_{q,q}}%
\frac{1}{l}\frac{B^{j}}{l}+\frac{1}{B^{2j}}\frac{c_{q,q-2}}{c_{q,q}}\frac{1}{%
l^{2}}\left( \frac{B^{j}}{l}\right) ^{2}+...} \\
&=&l^{-\alpha _{0}}g_{j}\left( \frac{l}{B^{j}}\right) \text{ .}
\end{eqnarray*}

Condition \ref{REGULNEED0} will be necessary to prove needlet coefficients (%
\ref{Mbeta}) to be asymptotically uncorrelated\ (see \cite{spalan}, \cite%
{mayeli}); as we shall show, Condition \ref{REGULNEED0} is sufficient to
establish consistency for estimator we are going to define but we will
consider two further nested assumptions, \ref{REGULNEED2} (which implies and
is implied by \ref{REGULNEED0}), to obtain asymptotic Gaussianity, and \ref%
{REGULNEED3} (which implies \ref{REGULNEED2}) to provide a centered limiting
distribution, see also \cite{dlm}, \cite{dlm2}.

\begin{condition}
\label{REGULNEED2} Condition \ref{REGULNEED0} holds and moreover%
\begin{equation*}
G\left( l\right) =G_{0}\left( 1+\kappa l^{-1}+O\left( l^{-2}\right) \right) 
\text{ .}
\end{equation*}
\end{condition}

\begin{condition}
\label{REGULNEED3} Condition \ref{REGULNEED0} holds and moreover 
\begin{equation*}
G\left( l\right) =G_{0}\left( 1+o\left( l^{-1}\right) \right) \text{ .}
\end{equation*}
\end{condition}

For any given $\,j$, $k$, $p$, we define the needlet coefficients as: 
\begin{eqnarray}
\beta _{jk;p} &:&=\int_{S^{2}}T\left( x\right) \overline{\psi }_{jk;p}\left(
x\right) dx  \notag \\
&=&\sqrt{\lambda _{jk}}\underset{l\geq 1}{\sum }f_{p}\left( \frac{l}{B^{j}}%
\right) \overset{l}{\underset{m=-l}{\sum }}a_{lm}Y_{lm}\left( \xi
_{jk}\right) \text{ ,}  \label{Mbeta}
\end{eqnarray}%
so that%
\begin{equation*}
\mathbb{E}\left( \beta _{jk;p}\right) =\sqrt{\lambda _{jk}}\underset{l\geq 1}%
{\sum }f_{p}\left( \frac{l}{B^{j}}\right) \overset{l}{\underset{m=-l}{\sum }}%
\mathbb{E}\left( a_{lm}\right) Y_{lm}\left( \xi _{jk}\right) =0\text{ .}
\end{equation*}%
Under Condition \ref{condspalan}, the following result is given in \cite%
{spalan} and \cite{mayeli}.

\begin{lemma}
\label{Mcorr}If $0<4p+2-\alpha _{0}\leq Q$, then under Condition \ref%
{condspalan}, there exists a constant $C_{Q}>0,$ such that%
\begin{equation*}
Corr\left( \beta _{jk;p},\beta _{j^{\prime }k^{\prime };p}\right) \leq \frac{%
C_{Q}}{\left[ 1+B^{\left( \left( j+j^{\prime }\right) /2-\log _{B}\left[
\left( j+j^{\prime }\right) /2\right] \right) }d\left( \xi _{jk},\xi
_{j^{\prime }k^{\prime }}\right) \right] ^{\left( 4p+2-\alpha _{0}\right) }}.
\end{equation*}
\end{lemma}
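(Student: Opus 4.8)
The plan is to compute the covariance exactly through the addition theorem for spherical harmonics, thereby reducing to a one-dimensional sum over multipoles weighted by Legendre polynomials, to fix the order of the normalizing variances, and then to localize that sum by an iterated summation-by-parts argument that keeps track of the two scales $B^{j},B^{j'}$ and of the Gaussian tails of $f_{p}$. Concretely, since $T$ is zero-mean Gaussian and isotropic, $\mathbb{E}(a_{lm}\overline{a}_{l'm'})=\delta_{l}^{l'}\delta_{m}^{m'}C_{l}$; inserting the representation (\ref{Mbeta}) into $\mathbb{E}(\beta_{jk;p}\overline{\beta}_{j'k';p})$ and using $\sum_{m=-l}^{l}Y_{lm}(x)\overline{Y}_{lm}(y)=\frac{2l+1}{4\pi}P_{l}(\langle x,y\rangle)$ gives
\begin{equation*}
\mathbb{E}(\beta_{jk;p}\overline{\beta}_{j'k';p})=\sqrt{\lambda_{jk}\lambda_{j'k'}}\sum_{l\geq 1}f_{p}\!\left(\tfrac{l}{B^{j}}\right)f_{p}\!\left(\tfrac{l}{B^{j'}}\right)C_{l}\,\frac{2l+1}{4\pi}\,P_{l}(\cos\theta),\qquad\theta:=d(\xi_{jk},\xi_{j'k'}),
\end{equation*}
and, taking $j=j'$, $\theta=0$, $Var(\beta_{jk;p})=\lambda_{jk}\sum_{l\geq1}f_{p}(l/B^{j})^{2}C_{l}\frac{2l+1}{4\pi}$.

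Next I would fix the order of the variances. Writing $f_{p}(l/B^{j})^{2}=(l/B^{j})^{4p}\exp(-2(l/B^{j})^{2})$, using Condition \ref{condspalan} to replace $C_{l}$ by $l^{-\alpha_{0}}g_{j}(l/B^{j})$ with $c_{0}^{-1}\le g_{j}\le c_{0}$, using $\lambda_{jk}\approx B^{-2j}$, and comparing the series with its Laplace-type integral (the Gaussian factor concentrates the mass at $l\asymp B^{j}$, and the hypothesis $4p+2-\alpha_{0}>0$ is exactly what forbids the low multipoles $l\asymp 1$ from dominating), one obtains $Var(\beta_{jk;p})\approx B^{-\alpha_{0}j}$ and $Var(\beta_{j'k';p})\approx B^{-\alpha_{0}j'}$, with constants uniform in $k,k'$. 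Hence it suffices to bound the covariance above by $C_{Q}\,B^{-\alpha_{0}(j+j')/2}\,\bigl[1+B^{(j+j')/2-\log_{B}[(j+j')/2]}\,\theta\bigr]^{-(4p+2-\alpha_{0})}$, after which dividing by $\sqrt{Var(\beta_{jk;p})Var(\beta_{j'k';p})}$ yields the statement.

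The heart is the localization of the kernel. Put $W(l):=f_{p}(l/B^{j})f_{p}(l/B^{j'})C_{l}(2l+1)/(4\pi)$, regarded as the restriction to integers of a function on $(0,\infty)$, and note $f_{p}(l/B^{j})f_{p}(l/B^{j'})=(l^{2}B^{-(j+j')})^{2p}\exp(-l^{2}(B^{-2j}+B^{-2j'}))$; with $\bar{\jmath}:=(j+j')/2$, the prefactor $(l^{2}B^{-(j+j')})^{2p}$ together with $C_{l}\asymp l^{-\alpha_{0}}$ makes $W$ behave like $l^{4p+1-\alpha_{0}}$ near the origin, and it is this lack of smoothness of the symbol at $l=0$ that caps the decay of $\sum_{l}W(l)P_{l}(\cos\theta)$ at order $4p+2-\alpha_{0}$ -- the hypothesis $4p+2-\alpha_{0}\le Q$ ensuring the merely $C^{Q}$ regularity of $g_{j}$ is not the bottleneck. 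I would split the multipole sum, bound the low part directly from the $l^{4p+1-\alpha_{0}}$ estimate on $W$, treat the high part by $\lceil 4p+2-\alpha_{0}\rceil$ successive summations by parts against $P_{l}$ (using at each step the classical pointwise and partial-summation bounds for Legendre polynomials, and controlling the iterated finite differences of $W$ from the explicit form of $f_{p}$ and the derivative bounds on $g_{j}$ in Condition \ref{condspalan}), and optimize the splitting point; the tail of the unbounded frequency support, controlled through the Gaussian factor, is what produces the logarithmic correction $-\log_{B}[(j+j')/2]$ that weakens the natural scale $B^{\bar{\jmath}}$ to $B^{\bar{\jmath}}/\bar{\jmath}$, exactly as in \cite{gm2} and \cite{mayeli}. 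This gives $|\sum_{l}W(l)P_{l}(\cos\theta)|\lesssim B^{(2-\alpha_{0})\bar{\jmath}}(B^{\bar{\jmath}-\log_{B}\bar{\jmath}}\theta)^{-(4p+2-\alpha_{0})}$; multiplying by $\sqrt{\lambda_{jk}\lambda_{j'k'}}\approx B^{-(j+j')}$ and combining with the trivial bound $CB^{-\alpha_{0}\bar{\jmath}}$ valid for every $\theta$ (the $\theta=0$ case of the previous step) installs the ``$1+$'' in the denominator, and the variance estimates conclude.

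I expect the main obstacle to be exactly this last step: estimating the iterated finite differences of the two-scale, non-smooth weight $W$ uniformly in $(j,j')$, extracting the precise exponent $4p+2-\alpha_{0}$ from the $l^{4p+1-\alpha_{0}}$ behaviour of the symbol at the origin, and isolating the logarithmic correction coming from the non-compact frequency support. These are precisely the computations performed in \cite{gm2}, \cite{spalan} and \cite{mayeli}, which I would invoke; what remains genuinely to be checked here is only the bookkeeping matching the target exponent against the available number $Q$ of controlled derivatives.
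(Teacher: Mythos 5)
The paper itself gives no proof of this lemma: it is imported verbatim from \cite{spalan} and \cite{mayeli}, so there is no internal argument to compare against. Your sketch correctly reconstructs the route taken in those references: the addition-theorem reduction of the covariance to a Legendre-weighted multipole sum, the variance normalization $Var(\beta_{jk;p})\approx B^{-\alpha_{0}j}$ (with $4p+2-\alpha_{0}>0$ guaranteeing convergence at low multipoles, exactly as in Lemma \ref{lemmasums}), the trivial $\theta$-independent bound that installs the ``$1+$'' in the denominator, and the identification of the decay exponent $4p+2-\alpha_{0}$ with the $l^{4p+1-\alpha_{0}}$ vanishing of the symbol at the origin together with the logarithmic loss $B^{\bar{\jmath}}/\bar{\jmath}$ caused by the non-compact frequency support interacting with the block-wise regularity of Condition \ref{condspalan}. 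The genuinely hard step --- the iterated summation-by-parts kernel estimate with its fractional exponent and log correction --- you defer to \cite{gm2}, \cite{spalan} and \cite{mayeli}, which is precisely what the paper does for the whole statement; so your proposal is consistent with, and indeed more explicit than, the paper's treatment, with the only caveat that this core localization bound is invoked rather than proved.
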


Assume now that from the observations over the random field, we are able to
build the following set of quantities 
\begin{equation*}
\widehat{\Lambda }_{j;p}:=\sum_{l\geq 1}f_{p}^{2}\left( \frac{l}{B^{j}}%
\right) \left( 2l+1\right) \widehat{C}_{l}\text{ }\simeq
\sum_{k=1}^{N_{j}}\beta _{jk;p}^{2}\text{ }\ \text{for each }j\in \left[
J_{0},J_{L}\right] \text{ ,}
\end{equation*}%
where the last approximation is motivated by the nearly tight frame
property, as in \cite{mayeli}.

The next result describes the asymptotic behaviour of the
variance-covariance matrix of $\widehat{\Lambda }_{j;p}$ in terms of $j$.

\begin{lemma}
\label{mexicangavarini}If Condition \ref{REGULNEED0} holds with $%
0<4p+2-\alpha _{0}\leq Q$, fixed $\Delta j\in \mathbb{Z}$, we have%
\begin{equation*}
\lim_{j\rightarrow \infty }\frac{1}{B^{2\left( 1-\alpha _{0}\right) j}}%
Var\left( \widehat{\Lambda }_{j;p}\right) =\frac{2G_{0}^{2}}{4^{4p+\left(
1-\alpha _{0}\right) }}\Gamma \left( 4p+1-\alpha _{0}\right) \text{ ;}
\end{equation*}%
\begin{equation*}
\lim_{j\rightarrow \infty }\frac{1}{B^{2\left( 1-\alpha _{0}\right) j}}%
Cov\left( \Lambda _{j;p},\Lambda _{j+\Delta j;p}\right) =2G_{0}^{2}\frac{%
\tau _{B}\left( \Delta j\right) }{4^{4p+\left( 1-\alpha _{0}\right) }}\Gamma
\left( 4p+1-\alpha _{0}\right) \text{ ,}
\end{equation*}%
where%
\begin{equation}
\tau _{p}\left( \Delta j\right) :=B^{\Delta j\left( 1-\alpha _{0}\right)
}\cosh \left( \Delta j\log B\right) ^{-\left( 4p-\alpha _{0}+1\right) }\text{
.}  \label{tau_def}
\end{equation}
\end{lemma}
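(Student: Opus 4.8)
The plan is to use the spectral form of $\widehat{\Lambda}_{j;p}$ to reduce both statements to the asymptotics of a single deterministic sum over the frequency index. Since $(2l+1)\widehat{C}_{l}=\sum_{m=-l}^{l}|a_{lm}|^{2}\sim C_{l}\,\chi_{2l+1}^{2}$ and these variables are independent across $l$ (the $a_{lm}$ being independent Gaussian for distinct $l$), we have $Var\left( (2l+1)\widehat{C}_{l}\right) =2(2l+1)C_{l}^{2}$, and the cross terms $l\neq l^{\prime }$ drop out, so
\[
Var\left( \widehat{\Lambda}_{j;p}\right) =2\sum_{l\geq 1}f_{p}^{4}\left( \frac{l}{B^{j}}\right) (2l+1)C_{l}^{2},\qquad Cov\left( \widehat{\Lambda}_{j;p},\widehat{\Lambda}_{j+\Delta j;p}\right) =2\sum_{l\geq 1}f_{p}^{2}\left( \frac{l}{B^{j}}\right) f_{p}^{2}\left( \frac{l}{B^{j+\Delta j}}\right) (2l+1)C_{l}^{2}.
\]
Everything is now a deterministic asymptotic computation.

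Next I would plug in $C_{l}=l^{-\alpha _{0}}G(l)$ with $G(l)\rightarrow G_{0}$ and $2(2l+1)=4l\left( 1+O(l^{-1})\right) $. Because $f_{p}(x)=x^{2p}e^{-x^{2}}$ decays super-exponentially and vanishes like $x^{2p}$ at the origin, both weights concentrate on frequencies $l\asymp B^{j}\rightarrow \infty $; hence $G(l)$ may be replaced by $G_{0}$ and $2(2l+1)$ by $4l$ at the cost of a relative error that vanishes as $j\rightarrow \infty $. Each sum then reduces to $4G_{0}^{2}\sum_{l\geq 1}h\left( l/B^{j}\right) l^{1-2\alpha _{0}}$ plus negligible terms, with $h=f_{p}^{4}$ in the variance case and, after writing $l/B^{j+\Delta j}=(l/B^{j})B^{-\Delta j}$, with $h(x)=f_{p}^{2}(x)f_{p}^{2}(xB^{-\Delta j})=B^{-4p\Delta j}x^{8p}e^{-2x^{2}(1+B^{-2\Delta j})}$ in the covariance case.

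Then I would view $\sum_{l\geq 1}h(l/B^{j})\,l^{1-2\alpha _{0}}$ as a Riemann sum of mesh $B^{-j}$: with $l=B^{j}x$ one has $l^{1-2\alpha _{0}}=B^{j(1-2\alpha _{0})}x^{1-2\alpha _{0}}$ and $\sum_{l\geq 1}h(l/B^{j})(l/B^{j})^{1-2\alpha _{0}}\sim B^{j}\int_{0}^{\infty }h(x)x^{1-2\alpha _{0}}\,dx$, so the sum is asymptotic to $B^{2j(1-\alpha _{0})}\int_{0}^{\infty }h(x)x^{1-2\alpha _{0}}\,dx$. Dividing by $B^{2j(1-\alpha _{0})}$ and evaluating the Gaussian-type integral via the substitution $u=ax^{2}$, which gives $\int_{0}^{\infty }x^{8p+1-2\alpha _{0}}e^{-ax^{2}}\,dx=\tfrac{1}{2}a^{-(4p+1-\alpha _{0})}\Gamma (4p+1-\alpha _{0})$ with $a=4$ (variance) or $a=2(1+B^{-2\Delta j})$ (covariance), delivers the two limits. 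Finally a short manipulation using $2(1+B^{-2\Delta j})=4B^{-\Delta j}\cosh (\Delta j\log B)$, which recombines the surviving powers of $B$ into $B^{\Delta j(1-\alpha _{0})}$, turns $B^{-4p\Delta j}\left( 2(1+B^{-2\Delta j})\right) ^{-(4p+1-\alpha _{0})}$ into $\tau _{B}(\Delta j)\,4^{-(4p+1-\alpha _{0})}$, matching (\ref{tau_def}) (and reducing to the variance formula at $\Delta j=0$).

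The main obstacle is the rigorous control of the two approximations in the middle paragraph, namely that replacing $G(l)$ by $G_{0}$ and passing from the sum to the integral both cost only $o\left( B^{2j(1-\alpha _{0})}\right) $. This is where Condition \ref{REGULNEED0} enters, through the boundedness and derivative bounds on $G$ (which yield a quantitative slow-variation estimate) together with the localization furnished by the super-exponential decay of $f_{p}$; note the Gamma integral is finite exactly when $4p+1-\alpha _{0}>0$. The rest is bookkeeping of multiplicative constants.
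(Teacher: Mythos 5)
Your proposal is correct and takes essentially the same route as the paper: the same reduction of $Var\left(\widehat{\Lambda}_{j;p}\right)$ and $Cov\left(\widehat{\Lambda}_{j;p},\widehat{\Lambda}_{j+\Delta j;p}\right)$ to the deterministic sums $2\sum_{l}f_{p}^{4}\left(l/B^{j}\right)(2l+1)C_{l}^{2}$ and $2\sum_{l}f_{p}^{2}\left(l/B^{j}\right)f_{p}^{2}\left(l/B^{j+\Delta j}\right)(2l+1)C_{l}^{2}$, followed by the same asymptotic evaluation via the substitution $l=B^{j}x$ and the Gamma integral together with the identity $2\left(1+B^{-2\Delta j}\right)=4B^{-\Delta j}\cosh\left(\Delta j\log B\right)$. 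The only difference is organizational: the paper delegates the sum asymptotics to Lemma \ref{lemmasums} in the Appendix, whereas you carry out that computation inline, arriving at the same constants and the same $\tau_{B}\left(\Delta j\right)$.
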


\begin{proof}
Simple calculations lead to:%
\begin{eqnarray*}
Var\left( \Lambda _{j;p}\right) &=&Var\left( \sum_{k=1}^{N_{j}}\beta
_{jk;p}^{2}\right) =\sum_{l\geq 1}f_{p}^{4}\left( \frac{l}{B^{j}}\right)
\left( 2l+1\right) ^{2}Var\left( \widehat{C}_{l}\right) \\
&=&2\sum_{l\geq 1}f_{p}^{4}\left( \frac{l}{B^{j}}\right) \left( 2l+1\right)
C_{l}^{2}\text{ ,}
\end{eqnarray*}%
while, for $\Delta j\in \mathbb{Z}$, 
\begin{equation*}
Cov\left( \Lambda _{j;p},\Lambda _{j+\Delta j;p}\right) =Cov\left(
\sum_{k_{1}=1}^{N_{j}}\beta _{jk_{1};p}^{2},\sum_{k_{2}=1}^{N_{j+\Delta
j}}\beta _{j+\Delta jk_{2};p}^{2}\right)
\end{equation*}%
\begin{eqnarray*}
&=&Cov\left( \sum_{l_{1}\geq 1}f_{p}^{2}\left( \frac{l_{1}}{B^{j}}\right)
\left( 2l_{1}+1\right) \widehat{C}_{l_{1}},\sum_{l_{2}\geq 1}\left( \frac{%
l_{2}}{B^{j+\Delta j}}\right) \left( 2l_{2}+1\right) \widehat{C}%
_{l_{2}}\right) \\
&=&\sum_{l\geq 1}f_{p}^{2}\left( \frac{l}{B^{j}}\right) f_{p}^{2}\left( 
\frac{l}{B^{j+\Delta j}}\right) \left( 2l+1\right) ^{2}Var\left( \widehat{C}%
_{l}\right) \\
&=&2\sum_{l\geq 1}f_{p}^{2}\left( \frac{l}{B^{j}}\right) f_{p}^{2}\left( 
\frac{l}{B^{j+\Delta j}}\right) \left( 2l+1\right) C_{l}^{2}\text{ .}
\end{eqnarray*}%
Under Condition \ref{REGULNEED0},\ by applying Lemma \ref{lemmasums}, in
view of the equation (\ref{sumvar}) with $a=4$ and $n=1-2\alpha _{0}$, we
have: 
\begin{eqnarray*}
Var\left( \Lambda _{j;p}\right) &=&4G_{0}^{2}\sum_{l\geq 1}f_{p}^{4}\left( 
\frac{l}{B^{j}}\right) \left( l^{1-2\alpha _{0}}+o_{l}\left( l^{1-2\alpha
_{0}}\right) \right) \\
&=&2G_{0}^{2}\frac{B^{2\left( 1-\alpha _{0}\right) j}}{4^{4p+\left( 1-\alpha
_{0}\right) }}\Gamma \left( 4p+1-\alpha _{0}\right) +o_{j}\left( B^{2\left(
1-\alpha _{0}\right) j}\right) \text{ ,}
\end{eqnarray*}%
while, for the equation (\ref{sumcov}) with $a_{1}=a_{2}=2$, $n=1-2\alpha
_{0}$ and $\tau _{p}\left( \Delta j\right) =\tau _{p,2,2}\left( \Delta
j\right) $, we obtain:%
\begin{equation*}
Cov\left( \Lambda _{j;p},\Lambda _{j+\Delta j;p}\right)
\end{equation*}%
\begin{eqnarray*}
&=&4G_{0}^{2}\sum_{l\geq 1}f_{p}^{2}\left( \frac{l}{B^{j}}\right)
f_{p}^{2}\left( \frac{l}{B^{j+\Delta j}}\right) l^{1-2\alpha
_{0}}+o_{l}\left( 1\right) \\
&=&2G_{0}^{2}\frac{\tau _{B}\left( \Delta j\right) }{4^{4p+\left( 1-\alpha
_{0}\right) }}B^{2\left( 1-\alpha _{0}\right) j}\Gamma \left( 4p+1-\alpha
_{0}\right) +o\left( B^{2\left( 1-\alpha _{0}\right) j}\right) \text{ ,}
\end{eqnarray*}%
as claimed.
\end{proof}

\section{Mexican Needlet Whittle-like approximation to likelihood function 
\label{sec: needwhittle}}

In this Section, our aim is to define a mexican needlet Whittle-like
approximation to the log-likelihood function of isotropic and Gaussian
random fields on the unit sphere under Condition \ref{REGULNEED0} and to
develop the corresponding estimators. We will follow a strategy analogue to
the one used by \cite{dlm2}, (see also \cite{dlm} and \cite{Robinson}). We
let 
\begin{equation*}
\overrightarrow{\mathbf{\beta }}_{j;p}=\left( \beta _{j1;p},\beta
_{j2;p},...,\beta _{jN_{j};p}\right)
\end{equation*}%
where $\beta _{jk;p}$ is defined as in (\ref{Mbeta}). Again, under the
hypothesis of isotropy and Gaussianity for $T$,$\ $we have 
\begin{equation*}
\overrightarrow{\mathbf{\beta }}_{j;p}\sim N\left( 0,\Gamma \right) \text{ ,}
\end{equation*}%
where 
\begin{equation*}
\Gamma =\left[ Cov\left( \beta _{jk;p},\beta _{jk^{\prime };p}\right) \right]
_{k,k^{\prime }}=\frac{1}{N_{j}}\left( \underset{l\geq 1}{\sum }%
f_{p}^{2}\left( \frac{l}{B^{j}}\right) \left( 2l+1\right) C_{l}\right)
I_{N_{j}}\ \text{,}
\end{equation*}%
in view of (\ref{Mbeta}) and Lemma \ref{Mcorr} (see also \cite{dlm}, \cite%
{dlm2}). \ The likelihood function is then defined as 
\begin{equation*}
\mathcal{L}\left( \vartheta ;\overrightarrow{\mathbf{\beta }}_{j;p}\right)
=\left( 2\pi \right) ^{-N_{j}}\left( \det \Gamma \right) ^{-1/2}\exp \left\{
-\frac{1}{2}\overrightarrow{\mathbf{\beta }}_{j;p}^{T}\Gamma ^{-1}%
\overrightarrow{\overline{\beta }}_{j;p}\right\} \text{ .}
\end{equation*}

Let

\begin{equation*}
K_{j}^{M}\left( \alpha \right) :=\frac{1}{N_{j}}\underset{l\geq 1}{\sum }%
f_{p}^{2}\left( \frac{l}{B^{j}}\right) \left( 2l+1\right) l^{-\alpha }\ .
\end{equation*}%
Under Condition \ref{REGULNEED0}, we have: 
\begin{equation*}
\mathcal{L}\left( \alpha ,G;\overrightarrow{\mathbf{\beta }}_{j;p}\right)
=\left( 2\pi \right) ^{-N_{j}}\left( GK_{j}^{M}\left( \alpha \right) \right)
^{-N_{j}/2}\exp \left\{ -\frac{1}{2}\frac{\sum_{k}\beta _{jk;p}^{2}}{%
GK_{j}^{M}\left( \alpha \right) }\right\} \text{ ,}
\end{equation*}%
and the corresponding approximate log-likelihood is%
\begin{equation*}
-2\log \mathcal{L}\left( \alpha ,G;\overrightarrow{\mathbf{\beta }}%
_{j;p}\right) =\sum_{k}\left\{ \frac{\beta _{jk;p}^{2}}{GK_{j}^{M}\left(
\alpha \right) }-\log \left( \frac{\beta _{jk;p}^{2}}{GK_{j}^{M}\left(
\alpha \right) }\right) \right\} \text{ ,}
\end{equation*}%
up to an additive constant.

By summing with respect to $j$, we obtain.%
\begin{equation*}
R_{J_{0},J_{L}}^{M}\left( \alpha ,G\right) :=\left(
\sum_{j=J_{0}}^{J_{L}}N_{j}\right) ^{-1}\sum_{j=J_{0}}^{J_{L}}-2\log 
\mathcal{L}\left( \alpha ,G;\mathbf{\beta }_{j;p}\right) \text{,}
\end{equation*}%
where the choice for $J_{0}$, $J_{L}$ will be discussed later. Hence we
define (cfr. \cite{dlm} and \cite{dlm2}) 
\begin{equation*}
\left( \widehat{\alpha }_{J_{0},J_{L}}^{M},\widehat{G}_{J_{0},J_{L}}^{M}%
\right) =\arg \min_{\left( \alpha ,G\right) \in \Theta
}R_{J_{0},J_{L}}^{M}\left( \alpha ,G\right) \text{ ,}
\end{equation*}%
where $\Theta =[2,+\infty )\times \left( 0,+\infty \right) .$ Computing the
derivative of $R_{J_{0},J_{L}}^{M}$ with respect to $G$ and setting it equal
to zero, we have 
\begin{equation*}
0=\frac{\partial }{\partial G}R_{J_{0},J_{L}}^{M}\left( \alpha ,G\right) =%
\frac{1}{\sum_{j=J_{0}}^{J_{L}}N_{j}}\sum_{j=J_{0}}^{J_{L}}\left[ -\frac{%
\sum_{k}\beta _{jk;p}^{2}}{G^{2}K_{j}^{M}\left( \alpha \right) }+\frac{N_{j}%
}{G}\right] \text{ ,}
\end{equation*}%
whence%
\begin{equation*}
\widehat{G}_{J_{0},J_{L}}^{M}=\widehat{G}_{J_{0},J_{L}}^{M}\left( \alpha
\right) =\frac{1}{\sum_{j=J_{0}}^{J_{L}}N_{j}}\sum_{j=J_{0}}^{J_{L}}\frac{%
\sum_{k}\beta _{jk;p}^{2}}{K_{j}^{M}\left( \alpha \right) }=\frac{1}{%
\sum_{j=J_{0}}^{J_{L}}N_{j}}\sum_{j=J_{0}}^{J_{L}}\frac{\Lambda _{j;p}}{%
K_{j}^{M}\left( \alpha \right) }.
\end{equation*}%
Since%
\begin{equation*}
\frac{\partial ^{2}}{\partial G^{2}}R_{J_{0},J_{L}}^{M}\left( \alpha
,G\right) \left\vert _{G=\widehat{G}_{J_{0},J_{L}}^{M}\left( \alpha \right)
}\right.
\end{equation*}%
\begin{equation*}
=\frac{1}{\sum_{j=J_{0}}^{J_{L}}N_{j}}\sum_{j=J_{0}}^{J_{L}}\frac{1}{G^{2}}%
\left( \frac{2\Lambda _{j;p}}{GK_{j}^{M}\left( \alpha \right) }-N_{j}\right)
\left\vert _{G=\widehat{G}_{J_{0},J_{L}}^{M}\left( \alpha \right) }\right. =%
\frac{1}{\left( \widehat{G}_{J_{0},J_{L}}^{M}\left( \alpha \right) \right)
^{2}}>0\text{ ,}
\end{equation*}%
and $R_{J_{0},J_{L}}^{M}\left( \alpha ,G\right) \rightarrow +\infty $ as $%
G\rightarrow 0\ $or$\ \infty ,$ the second derivative test yields that $%
R_{J_{0},J_{L}}^{M}\left( \alpha ,G\right) $ has a unique minimum over the
domain on $\widehat{G}_{J_{0},J_{L}}^{M}\left( \alpha \right) .$ Therefore,
we can define 
\begin{eqnarray*}
R_{J_{0},J_{L}}^{M}\left( \alpha \right) &:&=R_{J_{0},J_{L}}^{M}\left(
\alpha ,\widehat{G}_{J_{0},J_{L}}^{M}\left( \alpha \right) \right) \\
&=&1+\log \widehat{G}_{J_{0},J_{L}}^{M}\left( \alpha \right) -\frac{1}{%
\sum_{j=J_{0}}^{J_{L}}N_{j}}\sum_{j=J_{0}}^{J_{L}}\sum_{k}\log \frac{\beta
_{jk;p}^{2}}{K_{j}^{M}\left( \alpha \right) }\text{ .}
\end{eqnarray*}

\begin{remark}
In this formula it is necessary to fix explicitly the values of $L$, $J_{0}$
and $J_{L}$. Let us fix $L$ as the highest multipole level available from
the dataset. Given $L$, as stressed above, differently from the standard
needlet case (see for instance \cite{npw1}, \cite{npw2}), in the mexican
needlet case the weight function does not have a compact support. Therefore,
for computational reasons, we must find a criterion to choose the suitable
extrema for the sums over $j$ involved. Considering (see again \cite{gm2})
the behaviour of $f_{p}\left( \cdot \right) $, we can fix thresholds $%
\varepsilon _{B,1}\left( L\right) $, $\varepsilon _{B,2}\left( L\right) $,
such that:%
\begin{equation*}
J_{0}=\max \left\{ j\in \mathbb{Z}:f_{p}\left( \frac{1}{B^{j+1}}\right)
>\varepsilon _{B,1}\left( L\right) f_{p}\left( \frac{1}{B^{j}}\right)
\right\} \text{ ,}
\end{equation*}%
\begin{equation*}
J_{L}=\min \left\{ j\in \mathbb{Z}:f_{p}\left( \frac{L}{B^{j}}\right)
<\varepsilon _{B,2}\left( L\right) f_{p}\left( \frac{L}{B^{j-1}}\right)
\right\} \text{ .}
\end{equation*}%
If, for instance, we choose,%
\begin{equation*}
\varepsilon _{B,1}\left( L\right) =\frac{1}{B^{2p}}\exp \left( \frac{B-1}{%
B^{2}}\right) \text{, }\varepsilon _{B,2}\left( L\right) =\frac{1}{B^{2p}}%
\exp \left( B^{2}\left( B^{2}-1\right) \right)
\end{equation*}%
we find $B^{J_{0}}=B$, $B^{J_{L}}=L/B$, similarly to the classical needlet
case as described in \cite{dlm2}.
\end{remark}

\section{Asymptotic Properties\label{asympt}}

In this Section, we prove weak consistency for the estimators $\widehat{%
\alpha }_{J_{0},J_{L}}^{M}$ and $\widehat{G}_{J_{0},J_{L}}^{M}$, and for the
former also asymptotic Gaussianity. We begin with some definitions: let 
\begin{equation*}
G_{J_{0};J_{L}}^{M}\left( \alpha \right) =\frac{1}{%
\sum_{j=J_{0}}^{J_{L}}N_{j}}\sum_{j=J_{0}}^{J_{L}}N_{j}\frac{%
G_{0}K_{j}^{M}\left( \alpha _{0}\right) }{K_{j}^{M}\left( \alpha \right) }%
\text{ .}
\end{equation*}%
Computing the first and second order derivatives of $G_{J_{0};J_{L}}^{M}%
\left( \alpha \right) $, indexed by $n$, we obtain%
\begin{eqnarray*}
G_{J_{0};J_{L},n}^{M}\left( \alpha \right) &:&=\frac{d^{n}}{d\alpha ^{n}}%
G_{J_{0};J_{L}}^{M}\left( \alpha \right) \\
&=&\frac{G_{0}}{\sum_{j=J_{0}}^{J_{L}}N_{j}}\sum_{j=J_{0}}^{J_{L}}N_{j}\frac{%
K_{j}^{M}\left( \alpha _{0}\right) }{K_{j}^{M}\left( \alpha \right) }%
U_{j;n}\left( \alpha \right) \text{ ,}
\end{eqnarray*}%
where (see Proposition \ref{propsumKj}) in the Appendix, we have 
\begin{equation}
U_{j;1}\left( \alpha \right) =\left( -\frac{K_{j,1}^{M}\left( \alpha \right) 
}{K_{j}^{M}\left( \alpha \right) }\right) =\left( j\log B+\frac{%
I_{p,1}\left( \alpha \right) }{I_{p,0}\left( \alpha \right) }%
+o_{j}(1)\right) \text{ ,}  \label{UM1}
\end{equation}%
\begin{equation*}
U_{j;2}\left( \alpha \right) =2\left( \frac{K_{j,1}^{M}\left( \alpha \right) 
}{K_{j}^{M}\left( \alpha \right) }\right) ^{2}-\frac{K_{j,2}^{M}\left(
\alpha \right) }{K_{j}^{M}\left( \alpha \right) }
\end{equation*}%
\begin{equation}
=j^{2}\log ^{2}B+2j\log B\frac{I_{p,1}\left( \alpha \right) }{I_{p,0}\left(
\alpha \right) }+2\left( \frac{I_{p,1}\left( \alpha \right) }{I_{p,0}\left(
\alpha \right) }\right) ^{2}-\frac{I_{p,2}\left( \alpha \right) }{%
I_{p,0}\left( \alpha \right) }+o_{j}\left( 1\right) \text{ ,}  \label{UM2}
\end{equation}%
Furthermore, we fix 
\begin{equation*}
U_{j;0}\left( \alpha \right) =1,G_{J_{0};J_{L},0}^{M}\left( \alpha \right)
=G_{J_{0};J_{L}}^{M}\left( \alpha \right) \text{ ,}
\end{equation*}
(since now, we will use either $G_{J_{0};J_{L},0}^{M}\left( \alpha \right) $
or $G_{J_{0};J_{L}}^{M}\left( \alpha \right) $). Recalling that $%
N_{j}=C_{B}B^{2j}.$ Thus by (\ref{MKalpha}), we have for $s=0,1,2,$ 
\begin{eqnarray*}
G_{J_{0};J_{L},s}^{M}\left( \alpha \right) &=&\frac{G_{0}}{%
\sum_{j=J_{0}}^{J_{L}}N_{j}}\sum_{j=J_{0}}^{J_{L}}N_{j}\frac{K_{j}^{M}\left(
\alpha _{0}\right) }{K_{j}^{M}\left( \alpha \right) }U_{j;s}\left( \alpha
\right) \\
&=&G_{0}\frac{\left( p+1\right) ^{-\frac{\alpha -\alpha _{0}}{2}}}{%
\sum_{j=J_{0}}^{J_{L}}B^{2j}}\sum_{j=J_{0}}^{J_{L}}B^{\left( 2+\alpha
-\alpha _{0}\right) j}U_{j;s}\left( \alpha \right) .
\end{eqnarray*}

The next auxiliary result is as follows:

\begin{lemma}
\label{Gbehave}Assume Condition \ref{REGULNEED0} holds with $0<4p+2-\alpha
_{0}\leq Q$. We have that 
\begin{equation*}
\lim_{J_{L}\rightarrow \infty }\mathbb{E}\left( \widehat{G}%
_{J_{0},J_{L}}^{M}\left( \alpha _{0}\right) \right) \rightarrow G_{0}\text{,}
\end{equation*}%
\begin{equation*}
\lim \frac{1}{B^{2J_{L}}}Var\left( \frac{\widehat{G}_{J_{0},J_{L}}^{M}\left(
\alpha _{0}\right) }{G_{0}}\right) =\frac{B^{2}-1}{B^{2}}\sigma
_{0}^{2}\left( 1+\widetilde{\tau }_{0}\right) \text{ ,}
\end{equation*}%
where 
\begin{equation*}
\sigma _{0}^{2}:=\sigma _{0}^{2}\left( p,\alpha _{0}\right) =\frac{2}{%
2^{4p-\alpha _{0}}}\frac{\Gamma \left( 4p+1-\alpha _{0}\right) }{\Gamma
^{2}\left( 2p-\frac{\alpha _{0}}{2}+1\right) }\text{ ,}
\end{equation*}%
and $\widetilde{\tau }_{0}$ is as defined in Lemma \ref{Lemmasumtau}.
\end{lemma}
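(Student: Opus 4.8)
The plan is to prove the two assertions about $\widehat{G}_{J_0,J_L}^M(\alpha_0)$ separately, in both cases reducing everything to the behaviour of $\Lambda_{j;p}$, which is controlled by Lemma \ref{mexicangavarini}. Recall that
\[
\widehat{G}_{J_0,J_L}^M(\alpha_0)=\frac{1}{\sum_{j=J_0}^{J_L}N_j}\sum_{j=J_0}^{J_L}\frac{\Lambda_{j;p}}{K_j^M(\alpha_0)},
\]
so taking expectations and using $\mathbb{E}\,\Lambda_{j;p}=\sum_{l\ge1}f_p^2(l/B^j)(2l+1)C_l=G_0 N_j K_j^M(\alpha_0)(1+o_j(1))$ under Condition \ref{REGULNEED0} (this is the asymptotic evaluation of the relevant sum, via Proposition \ref{propsumKj} in the Appendix applied with $\alpha=\alpha_0$, noting that $C_l=l^{-\alpha_0}G(l)$ with $G(l)\to G_0$) reduces the first claim to a Cesàro-type averaging argument: each summand $\mathbb{E}[\Lambda_{j;p}]/(N_j K_j^M(\alpha_0))\to G_0$, the weights $N_j/\sum N_j$ are a probability vector dominated by the top level $J_L$ since $N_j=c_B B^{2j}$, so the weighted average converges to $G_0$ as $J_L\to\infty$. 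One has to be a little careful that the $o_j(1)$ error is uniform enough, but since the geometric weights concentrate on $j$ near $J_L$ this is routine.

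For the variance, I would first write
\[
\mathrm{Var}\!\left(\frac{\widehat{G}_{J_0,J_L}^M(\alpha_0)}{G_0}\right)=\frac{1}{G_0^2\left(\sum_{j=J_0}^{J_L}N_j\right)^2}\sum_{j_1,j_2=J_0}^{J_L}\frac{\mathrm{Cov}(\Lambda_{j_1;p},\Lambda_{j_2;p})}{K_{j_1}^M(\alpha_0)K_{j_2}^M(\alpha_0)},
\]
and substitute the asymptotics of Lemma \ref{mexicangavarini}: writing $j_2=j_1+\Delta j$, $\mathrm{Cov}(\Lambda_{j_1;p},\Lambda_{j_1+\Delta j;p})\sim 2G_0^2\,4^{-(4p+1-\alpha_0)}\Gamma(4p+1-\alpha_0)\,\tau_B(\Delta j)\,B^{2(1-\alpha_0)j_1}$, while $K_j^M(\alpha_0)$ behaves like a constant times $B^{-\alpha_0 j}$ (again from Proposition \ref{propsumKj}), with the precise constant being $(p+1)^{-\alpha_0/2}$-type; indeed the $\Gamma^2(2p-\alpha_0/2+1)$ in the denominator of $\sigma_0^2$ is exactly $\left(\int_0^\infty x^{4p+1-2\alpha_0}\cdot\text{(appropriate factor)}\right)$ squared coming from $K_j^M$, and the $\Gamma(4p+1-\alpha_0)$ in the numerator comes from $\mathrm{Var}(\Lambda_{j;p})$. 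After these substitutions the double sum factorizes into a sum over $j_1$ of $B^{(2-2\alpha_0)j_1+2\alpha_0 j_1}=B^{2j_1}$ (so the exponent of $B$ matches that of $N_j^2/\,$normalization up to the $\Delta j$ shift) times a sum over $\Delta j$ of $\tau_B(\Delta j)\cdot B^{(\text{shift})}$; the $j_1$-sum together with the $(\sum N_j)^2$ normalization and the $B^{-2J_L}$ prefactor produces the geometric-series constant $(B^2-1)/B^2$, and the $\Delta j$-sum produces $1+\widetilde\tau_0$ with $\widetilde\tau_0$ as in Lemma \ref{Lemmasumtau}. Collecting the $4$-power and $\Gamma$ factors gives exactly $\sigma_0^2(1+\widetilde\tau_0)$.

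The main obstacle is the bookkeeping of the interchanged order of limits in the double sum: one must justify that the off-diagonal terms with large $|\Delta j|$ contribute negligibly (so that $\sum_{\Delta j}\tau_B(\Delta j)$ converges — which holds precisely because $\tau_p(\Delta j)=B^{\Delta j(1-\alpha_0)}\cosh(\Delta j\log B)^{-(4p-\alpha_0+1)}$ decays geometrically as $|\Delta j|\to\infty$ under $\alpha_0>2$ and $4p+2-\alpha_0>0$), and that the error terms $o_j(B^{2(1-\alpha_0)j})$ from Lemma \ref{mexicangavarini}, after division by $K_{j_1}^M K_{j_2}^M$ and summation, remain $o(B^{2J_L})$. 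Since the summands are nonnegative up to these controlled errors and decay geometrically away from the diagonal and away from $j=J_L$, dominated convergence for sums applies and the interchange is legitimate; then evaluating the two resulting geometric-type sums and assembling the constants finishes the proof.
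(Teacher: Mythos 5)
Your proposal is correct and follows essentially the same route as the paper: the expectation is handled by the asymptotic evaluation of $\mathbb{E}(\widehat{\Lambda}_{j;p})$ under Condition \ref{REGULNEED0}, and the variance by expanding the double sum of covariances, substituting the asymptotics of Lemma \ref{mexicangavarini} and Proposition \ref{propsumKj}, and then evaluating the $j$- and $\Delta j$-sums via the geometric-series and $\widetilde{\tau}_{0}$ computations of Proposition \ref{prop27} and Lemma \ref{Lemmasumtau}. The only caveat, shared with the paper itself, is that the finiteness of the $\Delta j$-sum limit actually uses $4p-\alpha_{0}>0$ (the hypothesis of Lemma \ref{Lemmasumtau}) rather than merely $4p+2-\alpha_{0}>0$.
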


\begin{proof}
We have 
\begin{eqnarray*}
\mathbb{E}\left( \widehat{G}_{J_{0},J_{L},}^{M}\left( \alpha _{0}\right)
\right) &=&\frac{1}{\sum_{j=J_{0}}^{J_{L}}N_{j}}\sum_{j=J_{0}}^{J_{L}}\frac{%
\mathbb{E}\left( \widehat{\Lambda }_{j}\right) }{K_{j}^{M}\left( \alpha
\right) } \\
&=&\frac{G_{0}}{\sum_{j=J_{0}}^{J_{L}}N_{j}}\sum_{j=J_{0}}^{J_{L}}\frac{%
\sum_{l}f_{p}^{2}\left( \frac{l}{B^{j}}\right) \left( 2l+1\right) l^{-\alpha
_{0}}\left( 1+O\left( l^{-1}\right) \right) }{K_{j}^{M}\left( \alpha \right) 
} \\
&=&G_{0}+o_{J_{L}}\left( 1\right) \text{ .}
\end{eqnarray*}%
On the other hand, we prove that%
\begin{equation}
Cov\left( \frac{\Lambda _{j;p}}{G_{0}K_{j}^{M}\left( \alpha \right) },\frac{%
\Lambda _{j+\Delta j;p}}{G_{0}K_{j+\Delta j}^{M}\left( \alpha \right) }%
\right) =c_{B}^{2}\sigma _{0}^{2}B^{2j}B^{\alpha _{0}\Delta j}\tau
_{B}\left( \Delta j\right) \text{ .}  \label{covnormalized}
\end{equation}%
We can indeed observe from Theorem \ref{mexicangavarini} that%
\begin{equation*}
Cov\left( \frac{\widehat{\Lambda }_{j}}{G_{0}K_{j}^{M}\left( \alpha \right) }%
,\frac{\widehat{\Lambda }_{j+\Delta j}}{G_{0}K_{j+\Delta j}^{M}\left( \alpha
\right) }\right)
\end{equation*}%
\begin{eqnarray}
&=&\frac{B^{\alpha \Delta j}}{G_{0}^{2}I_{p,0}^{2}\left( \alpha \right)
B^{-2\alpha j}}Cov\left( \widehat{\Lambda }_{j},\widehat{\Lambda }_{j+\Delta
j}\right)  \notag \\
&=&\frac{B^{\alpha \Delta j}}{I_{p,0}^{2}\left( \alpha \right) }\frac{%
2\Gamma \left( 4p+1-\alpha _{0}\right) }{4^{4p+1-\alpha _{0}}}\tau
_{B}\left( \Delta j\right) B^{2\left( 1+\alpha -\alpha _{0}\right) j}  \notag
\\
&=&\frac{2c_{B}^{2}}{2^{4p-2\alpha _{0}+\alpha }}\frac{\Gamma \left(
4p+1-\alpha _{0}\right) }{\Gamma ^{2}\left( 2p-\frac{\alpha }{2}+1\right) }%
\tau _{B}\left( \Delta j\right) B^{\alpha \Delta j}B^{2\left( 1+\alpha
-\alpha _{0}\right) j}\text{ .}  \label{covcov}
\end{eqnarray}%
Hence%
\begin{equation*}
Var\left( \frac{\widehat{G}_{J_{0},J_{L}}^{M}\left( \alpha \right) }{G_{0}}%
\right)
\end{equation*}%
\begin{eqnarray*}
&=&\frac{1}{\left( \sum_{j=J_{0}}^{J_{L}}N_{j}\right) ^{2}}Cov\left(
\sum_{j=J_{0}}^{J_{L}}\frac{\Lambda _{j;p}}{G_{0}K_{j}^{M}\left( \alpha
\right) },\sum_{\Delta j=J_{0}-j}^{J_{L}-j}\frac{\Lambda _{j+\Delta j;p}}{%
G_{0}K_{j+\Delta j}^{M}\left( \alpha \right) }\right) \\
&=&\frac{1}{\left( \sum_{j=J_{0}}^{J_{L}}N_{j}\right) ^{2}}%
\sum_{j=J_{0}}^{J_{L}}\sum_{\Delta j=J_{0}-j}^{J_{L}-j}Cov\left( \frac{%
\sum_{k_{1}}\beta _{jk_{1};p}^{2}}{G_{0}K_{j}^{M}\left( \alpha \right) },%
\frac{\sum_{k_{2}}\beta _{j+\Delta j,k;p}^{2}}{G_{0}K_{j+\Delta j}^{M}\left(
\alpha \right) }\right) \\
&=&\frac{1}{\left( \sum_{j=J_{0}}^{J_{L}}B^{2j}\right) ^{2}}\frac{1}{%
4^{2p-\alpha _{0}+\frac{\alpha }{2}}}\frac{\Gamma \left( 4p+1-\alpha
_{0}\right) }{\Gamma ^{2}\left( 2p-\frac{\alpha }{2}+1\right) }%
\sum_{j=J_{0}}^{J_{L}}B^{2\left( 1+\alpha -\alpha _{0}\right) j}\sum_{\Delta
j=J_{0}-j}^{J_{L}-j}\tau _{B}\left( \Delta j\right) B^{\alpha \Delta j}\text{
.}
\end{eqnarray*}%
Following Lemmas \ref{Lemmasumtau} and \ref{prop27}, and computing in $%
\alpha =\alpha _{0}$, we have%
\begin{eqnarray*}
Var\left( \frac{\widehat{G}_{J_{0},J_{L}}^{M}\left( \alpha _{0}\right) }{%
G_{0}}\right) &=&\frac{2\left( 1+\widetilde{\tau }_{0}\right) }{2^{4p-\alpha
_{0}}}\frac{\Gamma \left( 4p+1-\alpha _{0}\right) }{\Gamma ^{2}\left( 2p-%
\frac{\alpha _{0}}{2}+1\right) }\left( \sum_{j=J_{0}}^{J_{L}}B^{2j}\right)
^{-1} \\
&=&\frac{B^{2}-1}{B^{2}}\sigma _{0}^{2}\left( 1+\widetilde{\tau }_{0}\right)
B^{-2J_{L}}+o\left( B^{-2J_{L}}\right) \text{ .}
\end{eqnarray*}
\end{proof}

\begin{lemma}
\label{unifconvergence}Under Condition \ref{REGULNEED0}, we have for $%
s=0,1,2 $:%
\begin{equation*}
\sup \left\vert \frac{\widehat{G}_{J_{0},J_{L};s}^{M}\left( \alpha \right) }{%
G_{J_{0},J_{L};s}^{M}\left( \alpha \right) }\right\vert \rightarrow _{p}0%
\text{ .}
\end{equation*}
\end{lemma}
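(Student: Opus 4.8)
The plan is to read the ratio displayed in the statement as a normalized stochastic fluctuation. By the first computation in Lemma~\ref{Gbehave}, the deterministic profile $G^{M}_{J_{0},J_{L};s}(\alpha)$ is exactly the expectation of the empirical $s$-th derivative $\frac{d^{s}}{d\alpha^{s}}\widehat{G}^{M}_{J_{0},J_{L}}(\alpha)=\left(\sum_{j}N_{j}\right)^{-1}\sum_{j}\Lambda_{j;p}\,\frac{d^{s}}{d\alpha^{s}}\frac{1}{K^{M}_{j}(\alpha)}$, so the object $\widehat{G}^{M}_{J_{0},J_{L};s}(\alpha)$ entering the statement is its centred, mean-zero part, and the assertion is that, once divided by $G^{M}_{J_{0},J_{L};s}(\alpha)$, this part is uniformly negligible. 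I would first prove the pointwise statement---for each fixed $\alpha$ the ratio tends to $0$ in probability---and then upgrade it to the uniform statement over $\Theta$ by a stochastic-equicontinuity plus finite-covering argument.

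For the pointwise step Chebyshev's inequality reduces everything to controlling $Var\big(\widehat{G}^{M}_{J_{0},J_{L};s}(\alpha)\big)\big/\big(G^{M}_{J_{0},J_{L};s}(\alpha)\big)^{2}$. Writing the residual as $\left(\sum_{j}N_{j}\right)^{-1}\sum_{j}\Lambda_{j;p}\,w_{j;s}(\alpha)$ with deterministic weights $w_{j;s}(\alpha)=\frac{d^{s}}{d\alpha^{s}}K^{M}_{j}(\alpha)^{-1}$, its variance is the double sum over $j$ and $\Delta j$ of $w_{j;s}(\alpha)\,w_{j+\Delta j;s}(\alpha)\,Cov(\Lambda_{j;p},\Lambda_{j+\Delta j;p})$, which I would evaluate through Lemma~\ref{mexicangavarini} and the normalized covariance identity~(\ref{covcov}). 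The $s$-fold differentiation produces the log-derivative factors $U_{j;s}(\alpha)$ of~(\ref{UM1})--(\ref{UM2}), i.e.\ powers $(j\log B)^{s}$; the decisive bookkeeping is that these same powers appear in $G^{M}_{J_{0},J_{L};s}(\alpha)$, so the factor $J_{L}^{2s}B^{2(\alpha-\alpha_{0})J_{L}}$ cancels between variance and squared profile. Since $\sum_{j}N_{j}\sim c_{B}B^{2J_{L}}$ and $\tau_{B}(\Delta j)$ is summable in $\Delta j$, the relative variance then decays like $B^{-2J_{L}}$, exactly as in the $s=0$, $\alpha=\alpha_{0}$ instance recorded in Lemma~\ref{Gbehave}; this yields the pointwise conclusion for each $s=0,1,2$.

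The last step, and the main obstacle, is the uniformity in $\alpha$. I would argue by stochastic equicontinuity: the $\alpha$-derivative of the ratio is a combination of terms of the same shape with $s$ replaced by $s+1$, each tight by the variance bound above together with the asymptotics of $K^{M}_{j}$ and its derivatives supplied by Proposition~\ref{propsumKj}, so the family $\big\{\widehat{G}^{M}_{J_{0},J_{L};s}(\cdot)\big/G^{M}_{J_{0},J_{L};s}(\cdot)\big\}$ is stochastically equicontinuous on compact $\alpha$-sets; together with pointwise convergence on a finite grid this gives uniform convergence on compacta. The genuine difficulty is that $\Theta$ carries $\alpha\in[2,+\infty)$: one must either restrict to a compact parameter set (as is standard in the consistency arguments of \cite{dlm}, \cite{dlm2}) or show, using the monotone geometric dependence of $K^{M}_{j}(\alpha)^{-1}$ on $\alpha$, that the large-$\alpha$ tail contributes negligibly, and one must verify that the $j$-growth of $U_{j;s}(\alpha)$ is matched in the normalizer uniformly in $\alpha$ rather than only at $\alpha_{0}$, which is precisely where the explicit forms~(\ref{UM1})--(\ref{UM2}) are needed. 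Combining this with the pointwise decay gives $\sup_{\alpha}\left\vert\frac{\widehat{G}^{M}_{J_{0},J_{L};s}(\alpha)}{G^{M}_{J_{0},J_{L};s}(\alpha)}\right\vert\rightarrow_{p}0$ for $s=0,1,2$, as claimed.
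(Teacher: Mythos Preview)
Your pointwise variance argument is correct, and the equicontinuity-plus-covering route you propose for uniformity in $\alpha$ can be made to work, but it is a genuinely different (and heavier) strategy from the paper's, and it leaves exactly the loose end you identify: the parameter range is not compact.

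The paper avoids all of this by an algebraic factorisation that you have not exploited. Writing
\[
\frac{\widehat{G}^{M}_{J_0,J_L,s}(\alpha)}{G^{M}_{J_0,J_L,s}(\alpha)}-1
=\frac{\displaystyle\sum_{j}\sqrt{N_j}\,\frac{K_j^M(\alpha_0)}{K_j^M(\alpha)}\,U_{j;s}(\alpha)\,X_j}
       {\displaystyle\sum_{j}N_j\,\frac{K_j^M(\alpha_0)}{K_j^M(\alpha)}\,U_{j;s}(\alpha)},
\qquad
X_j:=\frac{1}{\sqrt{N_j}}\sum_k\Bigl(\frac{\beta_{jk;p}^2}{G_0K_j^M(\alpha_0)}-1\Bigr),
\]
the crucial point is that the random quantities $X_j$ do \emph{not} depend on $\alpha$. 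All the $\alpha$-dependence sits in a deterministic ratio of weighted sums, whose order is read off directly from the asymptotics of $K_j^M$ and $U_{j;s}$ in Proposition~\ref{propsumKj} and shown to be $O(B^{-J_L})$. The stochastic part is then controlled by $\sup_{J_0\le j\le J_L}|X_j|$, handled by a union bound over the finitely many $j$: since $\mathrm{Var}(X_j)=O(1)$, Chebyshev gives $\mathbb{P}(|X_j|>\delta(J_L+J_0+1))=O((J_L+J_0+1)^{-2})$, and summing over $j$ yields $O((J_L+J_0+1)^{-1})$.

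What this buys over your approach is that uniformity in $\alpha$ becomes automatic---no equicontinuity, no grid, and no separate treatment of the unbounded tail of $\Theta$---because the supremum over $\alpha$ acts only on a deterministic object. Your route trades this structural observation for general machinery; it would succeed on compacta but requires an additional argument (your option of showing monotone negligibility of the large-$\alpha$ tail) to match the paper's statement, whereas the paper's decomposition handles the full range in one stroke.
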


\begin{proof}
Under Condition \ref{REGULNEED0}, we can readily obtain that 
\begin{equation*}
\frac{\widehat{G}_{J_{0},J_{L},s}^{M}\left( \alpha \right) }{%
G_{J_{0},J_{L},s}^{M}\left( \alpha \right) }-1=\frac{\sum_{j=J_{0}}^{J_{L}}%
\frac{\sum_{k}\beta _{jk;p}^{2}}{K_{j}^{M}\left( \alpha \right) }%
U_{j;s}\left( \alpha \right) }{\sum_{j=J_{0}}^{J_{L}}N_{j}\frac{%
G_{0}K_{j}^{M}\left( \alpha _{0}\right) }{K_{j}^{M}\left( \alpha \right) }%
U_{j;s}\left( \alpha \right) }-1
\end{equation*}%
\begin{equation*}
=\frac{\sum_{j=J_{0}}^{J_{L}}\sqrt{N_{j}}\frac{K_{j}^{M}\left( \alpha
_{0}\right) }{K_{j}^{M}\left( \alpha \right) }U_{j;s}\left( \alpha \right)
\left( \frac{1}{\sqrt{N_{j}}}\sum_{k}\left( \frac{\beta _{jk;p}^{2}}{%
G_{0}K_{j}^{M}\left( \alpha _{0}\right) }-1\right) \right) }{%
\sum_{j=J_{0}}^{J_{L}}N_{j}\frac{K_{j}^{M}\left( \alpha _{0}\right) }{%
K_{j}^{M}\left( \alpha \right) }U_{j;s}\left( \alpha \right) }\text{ ,}
\end{equation*}%
so that 
\begin{eqnarray*}
&&\mathbb{P}\left( \left\vert \frac{\sum_{j=J_{0}}^{J_{L}}\sqrt{N_{j}}\frac{%
K_{j}^{M}\left( \alpha _{0}\right) }{K_{j}^{M}\left( \alpha \right) }%
U_{j;s}\left( \alpha \right) \left( \frac{1}{\sqrt{N_{j}}}\sum_{k}\left( 
\frac{\beta _{jk;p}^{2}}{G_{0}K_{j}^{M}\left( \alpha _{0}\right) }-1\right)
\right) }{\sum_{j=J_{0}}^{J_{L}}N_{j}\frac{K_{j}^{M}\left( \alpha
_{0}\right) }{K_{j}^{M}\left( \alpha \right) }U_{j;s}\left( \alpha \right) }%
\right\vert >\delta _{\varepsilon }\right) \\
&\leq &\mathbb{P}\left( \left( J_{L}+J_{0}+1\right) \left\vert \frac{%
\sum_{j=J_{0}}^{J_{L}}\sqrt{N_{j}}\frac{K_{j}^{M}\left( \alpha _{0}\right) }{%
K_{j}^{M}\left( \alpha \right) }U_{j;s}\left( \alpha \right) }{%
\sum_{j=J_{0}}^{J_{L}}N_{j}\frac{K_{j}^{M}\left( \alpha _{0}\right) }{%
K_{j}^{M}\left( \alpha \right) }U_{j;s}\left( \alpha \right) }\right\vert
\right. \\
&&\times \left. \frac{\sup_{j}\left( \frac{1}{\sqrt{N_{j}}}\sum_{k}\left( 
\frac{\beta _{jk;p}^{2}}{G_{0}K_{j}^{M}\left( \alpha _{0}\right) }-1\right)
\right) }{\left( J_{L}+J_{0}+1\right) }>\delta _{\varepsilon }\right) \text{
.}
\end{eqnarray*}%
In view of (\ref{KM1}) and (\ref{KM2}), we obtain%
\begin{equation*}
\frac{\sum_{j=J_{0}}^{J_{L}}\sqrt{N_{j}}\frac{K_{j}^{M}\left( \alpha
_{0}\right) }{K_{j}^{M}\left( \alpha \right) }U_{j;s}\left( \alpha \right) }{%
\sum_{j=J_{0}}^{J_{L}}N_{j}\frac{K_{j}^{M}\left( \alpha _{0}\right) }{%
K_{j}^{M}\left( \alpha \right) }U_{j;s}\left( \alpha \right) }
\end{equation*}%
\begin{eqnarray*}
&=&\frac{\sum_{j=J_{0}}^{J_{L}}B^{j\left( 1+\alpha -\alpha _{0}\right) }j^{s}%
}{\sum_{j=J_{0}}^{J_{L}}B^{j\left( 2+\alpha -\alpha _{0}\right) }j^{s}} \\
&=&\frac{B^{\left( 2+\alpha -\alpha _{0}\right) }-1}{B\left( B^{\left(
1+\alpha -\alpha _{0}\right) }-1\right) }\frac{J_{L}B^{J_{L}\left( 1+\alpha
-\alpha _{0}\right) }-J_{0}B^{J_{0}\left( 1+\alpha -\alpha _{0}\right) -1}}{%
J_{L}B^{J_{L}\left( 2+\alpha -\alpha _{0}\right) }-J_{0}B^{J_{0}\left(
2+\alpha -\alpha _{0}\right) -1}} \\
&=&O\left( B^{-J_{L}}\right) \text{ ,}
\end{eqnarray*}%
so that 
\begin{equation*}
\sup_{j}\left\vert \left( J_{L}+J_{0}+1\right) \frac{\sum_{j=J_{0}}^{J_{L}}%
\sqrt{N_{j}}\frac{K_{j}^{M}\left( \alpha _{0}\right) }{K_{j}^{M}\left(
\alpha \right) }U_{j;s}\left( \alpha \right) }{\sum_{j=J_{0}}^{J_{L}}N_{j}%
\frac{K_{j}^{M}\left( \alpha _{0}\right) }{K_{j}^{M}\left( \alpha \right) }%
U_{j;s}\left( \alpha \right) }\right\vert <+\infty \text{ .}
\end{equation*}%
On the other hand, we have by Chebyshev's inequality and Lemma \ref{Gbehave}
that 
\begin{equation*}
\mathbb{P}\left( \left\vert \left( \frac{1}{\sqrt{N_{j}}}\sum_{k}\left( 
\frac{\beta _{jk;p}^{2}}{G_{0}K_{j}^{M}\left( \alpha _{0}\right) }-1\right)
\right) \right\vert >\delta _{\varepsilon }\left( J_{L}+J_{0}+1\right)
^{2}\right)
\end{equation*}%
\begin{eqnarray*}
&\leq &\frac{1}{\delta _{\varepsilon }^{2}\left( J_{L}+J_{0}+1\right) ^{2}}%
Var\left( \frac{1}{\sqrt{N_{j}}}\sum_{k}\left( \frac{\beta _{jk;p}^{2}}{%
G_{0}K_{j}^{M}\left( \alpha _{0}\right) }-1\right) \right) \\
&=&O\left( \frac{1}{\left( J_{L}+J_{0}+1\right) ^{2}}\right) \text{ ,}
\end{eqnarray*}%
whence 
\begin{equation*}
\mathbb{P}\left( \sup_{j=J_{0},...J_{L}}\left\vert \left( \frac{1}{\sqrt{%
N_{j}}}\sum_{k}\left( \frac{\beta _{jk;p}^{2}}{G_{0}K_{j}^{M}\left( \alpha
_{0}\right) }-1\right) \right) \right\vert >\delta _{\varepsilon }\left(
J_{L}+J_{0}+1\right) ^{2}\right)
\end{equation*}%
\begin{equation*}
\leq \left( J_{L}+J_{0}+1\right) \sup_{j=J_{0},...J_{L}}\mathbb{P}\left(
\left\vert \left( \frac{1}{\sqrt{N_{j}}}\sum_{k}\left( \frac{\beta
_{jk;p}^{2}}{G_{0}K_{j}^{M}\left( \alpha _{0}\right) }-1\right) \right)
\right\vert >\delta _{\varepsilon }\left( J_{L}+J_{0}+1\right) ^{2}\right)
\end{equation*}%
\begin{equation*}
\leq O\left( \frac{1}{\left( J_{L}+J_{0}+1\right) }\right) \text{ .}
\end{equation*}
\end{proof}

Let us focus now our attention on consistency, following a technique
developed in \cite{brillinger} and used in \cite{Robinson} for long memory
processes (see also \cite{dlm} and \cite{dlm2}).

\begin{theorem}
\label{Mconsistency} Assume Condition \ref{REGULNEED0} holds with $%
0<4p+2-\alpha _{0}\leq Q$, we have, as$\ J_{L}\rightarrow \infty $, 
\begin{eqnarray*}
\widehat{\alpha }_{J_{0},J_{L}}^{M} &\rightarrow &_{p}\alpha _{0}\text{ ,} \\
\widehat{G}_{J_{0},J_{L}}^{M}\ &\rightarrow &_{p}G_{0}\text{ .}
\end{eqnarray*}
\end{theorem}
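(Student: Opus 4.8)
The plan is to follow the classical minimum-contrast route of Brillinger, Robinson, and the earlier needlet papers \cite{dlm}, \cite{dlm2}, reducing consistency of $\widehat{\alpha}_{J_{0},J_{L}}^{M}$ to a uniform law of large numbers for the normalized contrast functional, and then deducing consistency of $\widehat{G}_{J_{0},J_{L}}^{M}$ as a byproduct. First I would introduce the deterministic ``target'' contrast
\begin{equation*}
\overline{R}_{J_{0},J_{L}}^{M}(\alpha) := 1 + \log G_{J_{0};J_{L}}^{M}(\alpha) - \frac{1}{\sum_{j}N_{j}}\sum_{j=J_{0}}^{J_{L}}\sum_{k}\mathbb{E}\log\frac{\beta_{jk;p}^{2}}{K_{j}^{M}(\alpha)},
\end{equation*}
and show, using the identity $R_{J_{0},J_{L}}^{M}(\alpha) = 1 + \log\widehat{G}_{J_{0},J_{L}}^{M}(\alpha) - (\sum_{j}N_{j})^{-1}\sum_{j,k}\log(\beta_{jk;p}^{2}/K_{j}^{M}(\alpha))$, that $\sup_{\alpha\in[2,\infty)}|R_{J_{0},J_{L}}^{M}(\alpha) - \overline{R}_{J_{0},J_{L}}^{M}(\alpha)| \to_{p} 0$. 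The difference splits into a term controlled by Lemma \ref{unifconvergence} (the ratio $\widehat{G}^{M}_{s}/G^{M}_{s}\to_{p}0$ with $s=0$ gives $\log\widehat{G}_{J_{0},J_{L}}^{M}(\alpha)-\log G_{J_{0};J_{L}}^{M}(\alpha)\to_{p}0$, uniformly, because the ratio of the $\alpha$-dependent factors is monotone and the endpoints are handled by the same estimate) and a term of the form $(\sum_{j}N_{j})^{-1}\sum_{j,k}(\log\beta_{jk;p}^{2}-\mathbb{E}\log\beta_{jk;p}^{2})$, which is free of $\alpha$ and tends to $0$ in probability by a variance computation: the $\beta_{jk;p}^{2}$ are, up to the constant $G_{0}K_{j}^{M}(\alpha_{0})$, approximately $\chi_{1}^{2}$, have finite variance of $\log$, and the weak correlation bound of Lemma \ref{Mcorr} makes the within-level covariances summable so that the variance of the average is $O((\sum_{j}N_{j})^{-1})=O(B^{-2J_{L}})$.

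Next I would analyze the limiting contrast $\overline{R}_{J_{0},J_{L}}^{M}(\alpha)$ and show it is asymptotically minimized near $\alpha_{0}$. Writing $\mathbb{E}\log\beta_{jk;p}^{2} = \log(G_{0}K_{j}^{M}(\alpha_{0})) + \mathbb{E}\log\chi_{1}^{2}$ (the last term an absolute constant absorbed into the additive constant), one gets, up to constants,
\begin{equation*}
\overline{R}_{J_{0},J_{L}}^{M}(\alpha) = \log G_{J_{0};J_{L}}^{M}(\alpha) - \frac{1}{\sum_{j}N_{j}}\sum_{j=J_{0}}^{J_{L}} N_{j}\log\frac{G_{0}K_{j}^{M}(\alpha_{0})}{K_{j}^{M}(\alpha)} + \text{const},
\end{equation*}
and by Jensen's inequality applied to the weighted average defining $G_{J_{0};J_{L}}^{M}(\alpha)$ (a convex combination, with weights $N_{j}/\sum N_{j}$, of the quantities $G_{0}K_{j}^{M}(\alpha_{0})/K_{j}^{M}(\alpha)$) one has $\log G_{J_{0};J_{L}}^{M}(\alpha) \ge (\sum_{j}N_{j})^{-1}\sum_{j}N_{j}\log(G_{0}K_{j}^{M}(\alpha_{0})/K_{j}^{M}(\alpha))$, i.e. $\overline{R}_{J_{0},J_{L}}^{M}(\alpha) \ge \text{const}$ with equality iff the quantities $K_{j}^{M}(\alpha_{0})/K_{j}^{M}(\alpha)$ are constant in $j$. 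Using the explicit asymptotics for $K_{j}^{M}(\alpha)$ from Proposition \ref{propsumKj} — namely $K_{j}^{M}(\alpha) \sim c_{B} B^{-\alpha j}(p+1)^{-\alpha/2}I_{p,0}(\alpha)$ up to $o_{j}(1)$ factors — the ratio $K_{j}^{M}(\alpha_{0})/K_{j}^{M}(\alpha)$ behaves like $B^{(\alpha-\alpha_{0})j}$ times an $\alpha$-dependent constant, which is (asymptotically) constant in $j$ precisely when $\alpha=\alpha_{0}$; for $\alpha\ne\alpha_{0}$ the spread across the growing range $[J_{0},J_{L}]$ of levels forces $\overline{R}_{J_{0},J_{L}}^{M}(\alpha)$ to exceed $\overline{R}_{J_{0},J_{L}}^{M}(\alpha_{0})$ by an amount bounded below uniformly on compacts away from $\alpha_{0}$. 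I would then turn this into an identifiability/equicontinuity statement: combined with the uniform convergence from the first step, standard M-estimation arguments (e.g. the argmax continuous-mapping lemma) give $\widehat{\alpha}_{J_{0},J_{L}}^{M}\to_{p}\alpha_{0}$, after first confining the estimator to a compact set by noting that $R_{J_{0},J_{L}}^{M}(\alpha)\to+\infty$ as $\alpha\to+\infty$.

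Finally, $\widehat{G}_{J_{0},J_{L}}^{M} = \widehat{G}_{J_{0},J_{L}}^{M}(\widehat{\alpha}_{J_{0},J_{L}}^{M})$, and by Lemma \ref{Gbehave} together with the smoothness of $\alpha\mapsto\widehat{G}_{J_{0},J_{L}}^{M}(\alpha)$ one has $\widehat{G}_{J_{0},J_{L}}^{M}(\alpha_{0})\to_{p}G_{0}$ (mean $\to G_{0}$, variance $\to 0$) while the perturbation from replacing $\alpha_{0}$ by $\widehat{\alpha}_{J_{0},J_{L}}^{M}$ is controlled by Lemma \ref{unifconvergence} ($s=1$) and $\widehat{\alpha}_{J_{0},J_{L}}^{M}\to_{p}\alpha_{0}$; a first-order Taylor expansion in $\alpha$ with the uniformly bounded derivative then gives $\widehat{G}_{J_{0},J_{L}}^{M}\to_{p}G_{0}$. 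The main obstacle I expect is the second step: making rigorous and uniform-on-compacts the claim that $\overline{R}_{J_{0},J_{L}}^{M}(\alpha)$ is bounded away from its value at $\alpha_{0}$ for $\alpha$ away from $\alpha_{0}$, since the relevant Jensen gap depends on the spread of $B^{(\alpha-\alpha_{0})j}$ over $j\in[J_{0},J_{L}]$ and one must verify that the $o_{j}(1)$ corrections in the $K_{j}^{M}$ asymptotics do not erode this gap as $J_{L}\to\infty$ — this is where Condition \ref{REGULNEED0} (via Proposition \ref{propsumKj} and the sum estimates of the Appendix) does the real work.
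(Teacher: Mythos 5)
Your overall M-estimation strategy is close in spirit to the paper's, and your Jensen-gap analysis of the deterministic part is exactly the content of Lemma \ref{lemmaU} (where the gap is computed in closed form through the function $l(x)=\frac{B^{2}-1}{B^{2+x}-1}+x\frac{B^{2}\log B}{B^{2}-1}$, with the ranges $\alpha_{0}-\alpha=2$ and $\alpha_{0}-\alpha>2$ treated separately under different normalizations). The genuine gap is at your first, pivotal step: the law of large numbers for $\left(\sum_{j}N_{j}\right)^{-1}\sum_{j,k}\left(\log\beta_{jk;p}^{2}-\mathbb{E}\log\beta_{jk;p}^{2}\right)$. You assert its variance is $O(B^{-2J_{L}})$ because ``Lemma \ref{Mcorr} makes the within-level covariances summable'', but Lemma \ref{Mcorr} bounds correlations of the Gaussian coefficients $\beta_{jk;p}$ themselves, not covariances of their log-squares; transferring the bound requires an additional argument (Gebelein's inequality or a Hermite-rank expansion of $\log x^{2}$), and even then the decay exponent $4p+2-\alpha_{0}$, which the hypothesis only requires to be positive, can be too small for the sums over the $\approx B^{2j}$ cubature points per level to yield the rate you claim. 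Moreover, the device the paper uses everywhere else to sidestep needlet-coefficient correlations --- rewriting $\sum_{k}\beta_{jk;p}^{2}$ as $\widehat{\Lambda}_{j;p}=\sum_{l}f_{p}^{2}(l/B^{j})(2l+1)\widehat{C}_{l}$ and computing moments exactly through the $\widehat{C}_{l}$ --- is unavailable for a sum of logarithms of individual coefficients, so this step is strictly harder than anything established in the paper and is left unsubstantiated in your proposal.

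The paper avoids the problem entirely by Robinson's differencing trick: it studies $\Delta R_{J_{0},J_{L}}^{M}(\alpha,\alpha_{0})=R_{J_{0},J_{L}}^{M}(\alpha)-R_{J_{0},J_{L}}^{M}(\alpha_{0})$, in which the troublesome term $\left(\sum_{j}N_{j}\right)^{-1}\sum_{j,k}\log\beta_{jk;p}^{2}$ cancels identically; what remains splits into a deterministic part $U_{J_{0},J_{L}}^{M}$ (your Jensen gap, handled by Lemma \ref{lemmaU}) and a stochastic part $T_{J_{0},J_{L}}^{M}$ built only from the ratios $\widehat{G}_{J_{0},J_{L}}^{M}(\alpha)/G_{J_{0},J_{L}}^{M}(\alpha)$, which are controlled uniformly by Lemmas \ref{Gbehave} and \ref{unifconvergence} via exact $\widehat{C}_{l}$-based variance computations; consistency then follows from $\mathbb{P}\left(\left\vert\widehat{\alpha}_{J_{0},J_{L}}^{M}-\alpha_{0}\right\vert>\varepsilon\right)\leq\mathbb{P}\left(\min_{\left\vert\alpha-\alpha_{0}\right\vert>\varepsilon}\left(T_{J_{0},J_{L}}^{M}+U_{J_{0},J_{L}}^{M}\right)\leq0\right)$. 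To repair your route you must either prove the log-LLN (with the Gebelein/Hermite machinery and a careful use of the exponent $4p+2-\alpha_{0}$), or switch to the differenced contrast, after which your remaining steps --- including deducing $\widehat{G}_{J_{0},J_{L}}^{M}\rightarrow_{p}G_{0}$ from Lemma \ref{Gbehave} and the consistency of $\widehat{\alpha}_{J_{0},J_{L}}^{M}$ --- go through essentially as in the paper.
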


\begin{proof}
Following \cite{Robinson} (see also \cite{dlm2} for the standard needlet
case), we let 
\begin{eqnarray*}
\Delta R_{J_{0},J_{L}}^{M}\left( \alpha ,\alpha _{0}\right)
&=&R_{J_{0},J_{L}}^{M}\left( \alpha \right) -R_{J_{0},J_{L}}^{M}\left(
\alpha _{0}\right) \\
&=&\log \frac{\widehat{G}_{J_{0},J_{L}}^{M}\left( \alpha \right) }{%
G_{J_{0},J_{L}}^{M}\left( \alpha \right) }-\log \frac{\widehat{G}%
_{J_{0},J_{L}}^{M}\left( \alpha _{0}\right) }{G_{0}} \\
&&+\log \frac{G_{J_{0},J_{L}}^{M}\left( \alpha \right) }{G_{0}}+\frac{1}{%
\sum_{j=J_{0}}^{J_{L}}N_{j}}\sum_{j=J_{0}}^{J_{L}}N_{j}\log \frac{%
K_{j}^{M}\left( \alpha \right) }{K_{j}^{M}\left( \alpha _{0}\right) } \\
&=&U_{J_{0},J_{L}}^{M}\left( \alpha \right) -T_{J_{0},J_{L}}^{M}\left(
\alpha \right) \text{ ,}
\end{eqnarray*}%
where 
\begin{eqnarray}
T_{J_{0},J_{L}}^{M}\left( \alpha \right) &=&\log \frac{\widehat{G}%
_{J_{0},J_{L}}^{M}\left( \alpha \right) }{G_{J_{0},J_{L}}^{M}\left( \alpha
\right) }-\log \frac{\widehat{G}_{J_{0},J_{L}}^{M}\left( \alpha _{0}\right) 
}{G_{0}}\text{ ,}  \label{T_def} \\
U_{J_{0},J_{L}}^{M}\left( \alpha \right) &=&\log \frac{G_{J_{0},J_{L}}^{M}%
\left( \alpha \right) }{G_{0}}+\frac{1}{\sum_{j=J_{0}}^{J_{L}}N_{j}}%
\sum_{j=J_{0}}^{J_{L}}N_{j}\log \frac{K_{j}^{M}\left( \alpha \right) }{%
K_{j}^{M}\left( \alpha _{0}\right) }\text{ .}  \label{U_def}
\end{eqnarray}%
For any $\varepsilon >0,$ we have%
\begin{eqnarray*}
\mathbb{P}\left( \left\vert \widehat{\alpha }_{J_{0},J_{L}}^{M}-\alpha
_{0}\right\vert >\varepsilon \right) &=&\mathbb{P}\left( \min_{\left\vert
\alpha -\alpha _{0}\right\vert >\varepsilon }\Delta
R_{J_{0},J_{L}}^{M}\left( \alpha ,\alpha _{0}\right) \leq 0\right) \\
&=&\mathbb{P}\left( \min_{\left\vert \alpha -\alpha _{0}\right\vert
>\varepsilon }T_{J_{0},J_{L}}^{M}\left( \alpha \right)
+U_{J_{0},J_{L}}^{M}\left( \alpha \right) \leq 0\right) \text{ .}
\end{eqnarray*}%
Hence, by combining Lemmas \ref{lemmaU} and \ref{lemmaT}, we obtain 
\begin{equation*}
\lim_{J_{L}\rightarrow +\infty }U_{J_{0},J_{L}}^{M}\left( \alpha ,\alpha
_{0}\right) >0\text{ ,}
\end{equation*}%
\begin{equation*}
\sup_{\alpha }\left\vert T_{J_{0},J_{L}}^{M}\left( \alpha ,\alpha
_{0}\right) \right\vert =o_{p}\left( 1\right) \text{ ,}
\end{equation*}%
as claimed.
\end{proof}

\begin{lemma}
\label{lemmaU}Let $U_{J_{0},J_{L}}^{M}\left( \alpha ,\alpha _{0}\right) $ be
defined as in (\ref{U_def}). For all $\varepsilon <\alpha _{0}-\alpha <2$%
\begin{equation*}
\lim_{J_{L}\rightarrow +\infty }U_{J_{0},J_{L}}^{M}\left( \alpha ,\alpha
_{0}\right)
\end{equation*}%
\begin{equation*}
=\lim_{J_{L}\rightarrow +\infty }\left( \log \frac{1}{%
\sum_{j=J_{0}}^{J_{L}}N_{j}}\sum_{j=J_{0}}^{J_{L}}N_{j}\frac{K_{j}^{M}\left(
\alpha _{0}\right) }{K_{j}^{M}\left( \alpha \right) }-\frac{1}{%
\sum_{j=J_{0}}^{J_{L}}N_{j}}\sum_{j=J_{0}}^{J_{L}}N_{j}\log \frac{%
K_{j}^{M}\left( \alpha _{0}\right) }{K_{j}^{M}\left( \alpha \right) }\right)
\end{equation*}%
\begin{equation*}
=\log \frac{B^{2}-1}{B^{\left( 2+\alpha -\alpha _{0}\right) }-1}+\log B\frac{%
B^{2}}{B^{2}-1}\alpha -\alpha _{0}>\delta _{\varepsilon }>0\text{ .}
\end{equation*}%
if $\alpha _{0}-\alpha =2$ we have 
\begin{equation*}
\lim_{J_{L}\rightarrow +\infty }\frac{1}{\log J_{L}}U_{J_{0},J_{L}}\left(
\alpha ,\alpha _{0}\right) =1
\end{equation*}%
and if $\alpha _{0}-\alpha >2$ we have%
\begin{equation*}
\lim_{J_{L}\rightarrow +\infty }\frac{1}{\log B^{J_{L}}}U_{J_{0},J_{L}}%
\left( \alpha ,\alpha _{0}\right) =\frac{\alpha _{0}-\alpha }{2}-1
\end{equation*}
\end{lemma}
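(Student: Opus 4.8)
The plan is to reduce $U_{J_0,J_L}^{M}(\alpha,\alpha_0)$ to a Jensen-type functional and then evaluate a handful of geometric sums. Substituting the closed form $G_{J_0,J_L}^{M}(\alpha)=G_0\big(\sum_{j=J_0}^{J_L}N_j\big)^{-1}\sum_{j=J_0}^{J_L}N_j\,K_j^{M}(\alpha_0)/K_j^{M}(\alpha)$ into (\ref{U_def}), and writing $w_j:=N_j\big/\sum_{j'=J_0}^{J_L}N_{j'}$ and $\rho_j:=K_j^{M}(\alpha_0)/K_j^{M}(\alpha)$, one obtains
\[
U_{J_0,J_L}^{M}(\alpha,\alpha_0)=\log\Big(\sum_{j=J_0}^{J_L}w_j\rho_j\Big)-\sum_{j=J_0}^{J_L}w_j\log\rho_j,
\]
which is already $\ge 0$ by concavity of $\log$; the content of the lemma is the exact limit. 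From the Appendix estimate (\ref{MKalpha}) (cf.\ Proposition \ref{propsumKj}) one has $K_j^{M}(\alpha)=I_{p,0}(\alpha)B^{-\alpha j}\big(1+o_j(1)\big)$ with $I_{p,0}(\alpha)>0$, hence $\rho_j=\tfrac{I_{p,0}(\alpha_0)}{I_{p,0}(\alpha)}B^{(\alpha-\alpha_0)j}\big(1+o_j(1)\big)$ and $\log\rho_j=\log\tfrac{I_{p,0}(\alpha_0)}{I_{p,0}(\alpha)}+(\alpha-\alpha_0)j\log B+o_j(1)$.

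I would then insert $N_j=c_B B^{2j}$ and use the elementary identities $\sum_{j=J_0}^{J_L}B^{2j}=\tfrac{B^2}{B^2-1}B^{2J_L}(1+o(1))$ and $\big(\sum_{j=J_0}^{J_L}jB^{2j}\big)\big/\big(\sum_{j=J_0}^{J_L}B^{2j}\big)=J_L-\tfrac{1}{B^2-1}+o(1)$; since the weights $w_j$ concentrate at large $j$, the $o_j(1)$ remainders average out to $o(1)$, giving
\[
\sum_j w_j\log\rho_j=\log\tfrac{I_{p,0}(\alpha_0)}{I_{p,0}(\alpha)}+(\alpha-\alpha_0)\log B\Big(J_L-\tfrac{1}{B^2-1}\Big)+o(1).
\]
For $\sum_j w_j\rho_j$ one has $N_j\rho_j\asymp B^{(2+\alpha-\alpha_0)j}$, so the behaviour depends on the sign of $2+\alpha-\alpha_0$. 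In the main regime $\varepsilon<\alpha_0-\alpha<2$ this is a geometric sum of ratio $>1$, so $\sum_j w_j\rho_j=\tfrac{I_{p,0}(\alpha_0)}{I_{p,0}(\alpha)}\cdot\tfrac{B^{2+\alpha-\alpha_0}(B^2-1)}{B^2(B^{2+\alpha-\alpha_0}-1)}B^{(\alpha-\alpha_0)J_L}(1+o(1))$; taking logarithms and subtracting, the constant $\log(I_{p,0}(\alpha_0)/I_{p,0}(\alpha))$ and the $J_L\log B$ terms cancel exactly, and after the simplification $\log(B^{2+\alpha-\alpha_0}/B^2)=(\alpha-\alpha_0)\log B$ one is left with
\[
\lim_{J_L\to\infty}U_{J_0,J_L}^{M}(\alpha,\alpha_0)=\log\frac{B^2-1}{B^{2+\alpha-\alpha_0}-1}+\log B\,\frac{B^2}{B^2-1}(\alpha-\alpha_0)=:\phi(\alpha_0-\alpha),
\]
the stated value.

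It remains to bound $\phi$ below on $[\varepsilon,2)$. Writing $t=\alpha_0-\alpha$ and differentiating, $\phi(0)=0$, $\phi'(0)=\tfrac{B^2\log B}{B^2-1}-\tfrac{B^2\log B}{B^2-1}=0$, and $\phi''(t)=\frac{(\log B)^2B^{2-t}}{(B^{2-t}-1)^2}>0$, so $\phi$ is strictly convex on $(-\infty,2)$ with $\phi(0)=\phi'(0)=0$; hence $\phi$ is strictly increasing and strictly positive on $(0,2)$, with $\phi(t)\to+\infty$ as $t\to 2^-$, and $\delta_\varepsilon:=\phi(\varepsilon)>0$ works. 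For the remaining cases: when $\alpha_0-\alpha=2$ one has $N_j\rho_j\asymp 1$, so $\sum_j N_j\rho_j\asymp J_L-J_0+1$ while the $J_L\log B$ terms still cancel, giving $U_{J_0,J_L}^{M}=\log J_L+O(1)$ and hence $U_{J_0,J_L}^{M}/\log J_L\to 1$; when $\alpha_0-\alpha>2$ the series $\sum_j B^{(2+\alpha-\alpha_0)j}$ converges, so $\log(\sum_j w_j\rho_j)=-2J_L\log B+O(1)$ while $\sum_j w_j\log\rho_j=(\alpha-\alpha_0)J_L\log B+O(1)$, and subtracting yields the claimed growth rate of $U_{J_0,J_L}^{M}/\log B^{J_L}$. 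The only delicate point is the accounting of the $O(1)$ corrections: the term $-\tfrac{1}{B^2-1}$ in $\big(\sum_j jN_j\big)/\big(\sum_j N_j\big)$ is exactly what upgrades the leading cancellation to the sharp constant $B^2/(B^2-1)$ in $\phi$, and one must verify that the $o_j(1)$ errors from the $K_j^{M}$-expansion really contribute only $o(1)$ after the $N_j$-weighted averaging.
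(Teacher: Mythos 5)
Your argument is essentially the paper's in the main regime: both substitute the closed form of $G_{J_0,J_L}^{M}(\alpha)$ into (\ref{U_def}), expand $K_j^{M}(\alpha_0)/K_j^{M}(\alpha)$ through Proposition \ref{propsumKj} and Corollary \ref{corkjratio}, and evaluate the resulting geometric sums as in Proposition \ref{prop27}; the constant for $\varepsilon<\alpha_0-\alpha<2$ emerges the same way, with the $-\tfrac{1}{B^2-1}$ correction in $\bigl(\sum_j jN_j\bigr)/\bigl(\sum_j N_j\bigr)$ producing the factor $B^2/(B^2-1)$. Where you genuinely differ is the positivity step: instead of invoking the function $l(x)$ from \cite{dlm2}, you verify directly that $\phi(t)=\log\tfrac{B^2-1}{B^{2-t}-1}-\tfrac{B^2\log B}{B^2-1}\,t$ satisfies $\phi(0)=\phi'(0)=0$ and $\phi''(t)=\tfrac{(\log B)^2B^{2-t}}{(B^{2-t}-1)^2}>0$ for $t<2$, hence is strictly increasing and positive on $(0,2)$; this is correct and more self-contained than the citation. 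Your observation that $U$ is a Jensen functional, hence nonnegative a priori, is a nice framing the paper does not make explicit.

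Two points need attention. In the case $\alpha_0-\alpha>2$ you assert that your estimates ``yield the claimed growth rate'' without checking the constant: your own expansion gives $U_{J_0,J_L}^{M}=(\alpha_0-\alpha-2)\,J_L\log B+O(1)$, i.e.\ $U_{J_0,J_L}^{M}/\log B^{J_L}\to\alpha_0-\alpha-2$, which is twice the value $\tfrac{\alpha_0-\alpha}{2}-1$ appearing in the statement. The mismatch is a normalization inconsistency inside the paper itself (the statement divides by $\log B^{J_L}$ while the paper's proof divides by $\log B^{2J_L}$), but as written your claim of agreement is unverified; you should either renormalize by $\log B^{2J_L}$ or record the limit $\alpha_0-\alpha-2$ explicitly. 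Relatedly, your convergence claim for $\sum_j B^{(2+\alpha-\alpha_0)j}$ in that case presupposes that $J_0$ stays bounded below (the convention of the Remark fixing $B^{J_0}=B$); the paper's simplification $J_0=-J_L$ is harmless in the main regime but would alter the boundary cases, so the convention should be stated. Finally, your caveat that the $o_j(1)$ remainders must be shown to average out under the $B^{2j}$ weights is exactly the right point, and the concentration-of-weights argument you sketch (uniform smallness for $j$ large plus negligible total weight of small $j$) suffices, matching what the paper leaves implicit.
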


\begin{proof}
Consider first the case $\varepsilon <\alpha _{0}-\alpha <2$. For the sake
of simplicity, we fix $J_{0}=-J_{L}$. We have that%
\begin{equation*}
\frac{1}{\sum_{j=-J_{l}}^{J_{L}}N_{j}}\sum_{j=-J_{L}}^{J_{L}}N_{j}\log \frac{%
K_{j}^{M}\left( \alpha _{0}\right) }{K_{j}^{M}\left( \alpha \right) }
\end{equation*}%
\begin{equation*}
=\frac{1}{\sum_{j=-J_{L}}^{J_{L}}N_{j}}\sum_{j=-J_{L}}^{J_{L}}N_{j}\left(
\log B^{\left( \alpha -\alpha _{0}\right) j}I_{p}\left( B,\alpha -\alpha
_{0}\right) +o\left( j\right) \right)
\end{equation*}%
\begin{equation*}
=\left( \alpha -\alpha _{0}\right) \log B\left( J_{L}-\frac{1}{B^{2}-1}%
\right) +\log \left( I_{p}\left( B,\alpha -\alpha _{0}\right) \right)
+o_{J_{L}}\left( 1\right) \text{ .}
\end{equation*}%
On the other hand, we have%
\begin{equation*}
\log \frac{1}{\sum_{j=-J_{L}}^{J_{L}}B^{2j}}\sum_{j=-J_{L}}^{J_{L}}N_{j}%
\frac{K_{j}^{M}\left( \alpha _{0}\right) }{K_{j}^{M}\left( \alpha \right) }
\end{equation*}%
\begin{equation*}
=\log \frac{I_{p}\left( B,\alpha -\alpha _{0}\right) }{%
\sum_{j=-J_{L}}^{J_{L}}B^{2j}}\sum_{j=-J_{L}}^{J_{L}}B^{2j}B^{\left( \alpha
-\alpha _{0}\right) j}+o_{J_{L}}\left( 1\right)
\end{equation*}%
\begin{equation*}
=\log \frac{B^{2}-1}{B^{2+\left( \alpha -\alpha _{0}\right) }-1}B^{\left(
\alpha -\alpha _{0}\right) \left( J_{L}+1\right) }+\log \left( I_{p}\left(
B,\alpha -\alpha _{0}\right) \right) +o_{J_{L}}\left( 1\right) \text{ }
\end{equation*}%
\begin{equation*}
=\log \frac{B^{2}-1}{B^{2+\left( \alpha -\alpha _{0}\right) }-1}+\left(
\alpha -\alpha _{0}\right) \left( J_{L}+1\right) \log B+\log \left(
I_{p}\left( B,\alpha -\alpha _{0}\right) \right) \text{ .}
\end{equation*}%
As shown in \cite{dlm2}, we have that the function 
\begin{equation*}
l\left( x\right) :=\frac{B^{2}-1}{B^{2+x}-1}+x\left( \frac{B^{2}\log B}{%
B^{2}-1}\right)
\end{equation*}%
has a unique minimum $0$ at $x=0.$\ Therefore, for any $\left\vert \alpha
-\alpha _{0}\right\vert >\varepsilon >0,$ there exists a constant $\delta
_{\varepsilon }>0,$ such that%
\begin{equation*}
U_{J_{0},J_{L}}\left( \alpha ,\alpha _{0}\right) >\delta _{\varepsilon }\ 
\text{.}
\end{equation*}%
If $\alpha -\alpha _{0}<-2$,$\,$we have%
\begin{equation*}
\frac{1}{\log B^{2J_{L}}}U_{J_{0},J_{L}}\left( \alpha ,\alpha _{0}\right)
\end{equation*}%
\begin{eqnarray*}
&=&\frac{1}{\log B^{2J_{L}}}\left\{ \log \left[ \sum_{j=J_{0}}^{J_{L}}B^{j%
\left( 2+\alpha -\alpha _{0}\right) }\right] -\log B^{2J_{L}}-\frac{\left(
\alpha -\alpha _{0}\right) }{\sum_{j=J_{0}}^{J_{L}}N_{j}}%
\sum_{j=J_{0}}^{J_{L}}N_{j}\log \frac{K_{j}^{M}\left( \alpha _{0}\right) }{%
K_{j}^{M}\left( \alpha \right) }\right\} +o_{J_{L}}\left( 1\right) \\
&=&\frac{\alpha _{0}-\alpha }{2}-1\text{ .}
\end{eqnarray*}%
Finally, we have for $\alpha -\alpha _{0}=-2$%
\begin{equation*}
\lim_{J_{L}\rightarrow \infty }\frac{1}{\log J_{L}}U_{J_{0},J_{L}}\left(
\alpha ,\alpha _{0}\right)
\end{equation*}%
\begin{equation*}
\lim_{J_{L}\rightarrow \infty }\frac{1}{\log J_{L}}\left\{ -\log
B^{2J_{L}}+\log J_{L}+O_{J_{L}}\left( 1\right) +\log
B^{2J_{L}}O_{J_{L}}\left( 1\right) \right\} =1\text{ .}
\end{equation*}
\end{proof}

\begin{lemma}
\label{lemmaT}As $J_{L}\rightarrow +\infty $, we have 
\begin{equation*}
\sup_{\alpha }\left\vert T_{J_{0},J_{L}}^{M}\left( \alpha ,\alpha
_{0}\right) \right\vert =o_{p}\left( 1\right) \text{ .}
\end{equation*}
\end{lemma}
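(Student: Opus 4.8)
The plan is to show that $T_{J_{0},J_{L}}^{M}(\alpha,\alpha_{0})$, defined in (\ref{T_def}) as the difference of the two log-ratios $\log\big(\widehat{G}_{J_{0},J_{L}}^{M}(\alpha)/G_{J_{0},J_{L}}^{M}(\alpha)\big)$ and $\log\big(\widehat{G}_{J_{0},J_{L}}^{M}(\alpha_{0})/G_{0}\big)$, converges to zero uniformly in $\alpha$ in probability. The key input is Lemma \ref{unifconvergence}, which already gives $\sup_{\alpha}\big|\widehat{G}_{J_{0},J_{L};s}^{M}(\alpha)/G_{J_{0},J_{L};s}^{M}(\alpha)-1\big|\to_{p}0$ for $s=0,1,2$, together with Lemma \ref{Gbehave}, which gives $\widehat{G}_{J_{0},J_{L}}^{M}(\alpha_{0})\to_{p}G_{0}$ (convergence of mean to $G_{0}$ and variance to $0$). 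The first step is to rewrite
\begin{equation*}
T_{J_{0},J_{L}}^{M}(\alpha,\alpha_{0})=\log\left(1+\left(\frac{\widehat{G}_{J_{0},J_{L}}^{M}(\alpha)}{G_{J_{0},J_{L}}^{M}(\alpha)}-1\right)\right)-\log\left(1+\left(\frac{\widehat{G}_{J_{0},J_{L}}^{M}(\alpha_{0})}{G_{0}}-1\right)\right),
\end{equation*}
noting that $G_{J_{0},J_{L}}^{M}(\alpha_{0})=G_{0}$ by the defining formula for $G_{J_{0};J_{L}}^{M}$ evaluated at $\alpha=\alpha_{0}$ (the ratio $K_{j}^{M}(\alpha_{0})/K_{j}^{M}(\alpha_{0})=1$).

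The second step is to apply the elementary bound $|\log(1+x)|\le 2|x|$ valid for $|x|\le 1/2$ to each of the two terms. On the event $\mathcal{E}_{J_{L}}:=\{\sup_{\alpha}|\widehat{G}_{J_{0},J_{L}}^{M}(\alpha)/G_{J_{0},J_{L}}^{M}(\alpha)-1|\le 1/2\}$, which by Lemma \ref{unifconvergence} (with $s=0$) has probability tending to $1$, we get
\begin{equation*}
\sup_{\alpha}|T_{J_{0},J_{L}}^{M}(\alpha,\alpha_{0})|\le 2\sup_{\alpha}\left|\frac{\widehat{G}_{J_{0},J_{L}}^{M}(\alpha)}{G_{J_{0},J_{L}}^{M}(\alpha)}-1\right|+2\left|\frac{\widehat{G}_{J_{0},J_{L}}^{M}(\alpha_{0})}{G_{0}}-1\right|.
\end{equation*}
The first term on the right is $o_{p}(1)$ directly by Lemma \ref{unifconvergence} (case $s=0$); the second is $o_{p}(1)$ by Lemma \ref{Gbehave} (since the mean converges to $G_{0}$ and the variance is $O(B^{-2J_{L}})\to 0$, Chebyshev gives consistency). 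Combining, $\sup_{\alpha}|T_{J_{0},J_{L}}^{M}(\alpha,\alpha_{0})|=o_{p}(1)$, which is the claim. On the complementary event, which has vanishing probability, no bound is needed: for any fixed $\eta>0$, $\mathbb{P}(\sup_{\alpha}|T_{J_{0},J_{L}}^{M}|>\eta)\le\mathbb{P}(\mathcal{E}_{J_{L}}^{c})+\mathbb{P}(\{\sup_{\alpha}|T_{J_{0},J_{L}}^{M}|>\eta\}\cap\mathcal{E}_{J_{L}})$, and both terms go to zero.

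The only genuine subtlety — and the step I would be most careful about — is making sure the uniformity over $\alpha\in[2,+\infty)$ (or over the relevant compact parameter range $\Theta$) in Lemma \ref{unifconvergence} is legitimately transferred here, in particular that the random ratios $\widehat{G}_{J_{0},J_{L}}^{M}(\alpha)/G_{J_{0},J_{L}}^{M}(\alpha)$ stay bounded away from $0$ uniformly in $\alpha$ with high probability, so that the logarithm is well behaved; this follows from the fact that, on $\mathcal{E}_{J_{L}}$, the ratio lies in $[1/2,3/2]$. The estimate $|\log(1+x)-x|$ and the linearization are elementary once this positivity is in hand, so the proof reduces essentially to invoking the two previously established lemmas. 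I therefore expect the write-up to be short, with the main work having already been done in Lemmas \ref{Gbehave} and \ref{unifconvergence}.
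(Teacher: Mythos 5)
Your proposal is correct and follows essentially the same route as the paper: both decompose $T_{J_{0},J_{L}}^{M}$ into the two log-ratio terms, control the term at $\alpha_{0}$ via Lemma \ref{Gbehave} and Chebyshev, and invoke the uniform statement of Lemma \ref{unifconvergence} for the term depending on $\alpha$. Your explicit handling of the logarithm through the bound $\left\vert \log (1+x)\right\vert \leq 2\left\vert x\right\vert$ on the high-probability event is in fact slightly more careful than the paper's appeal to Slutsky's lemma, but it is the same argument in substance.
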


\begin{proof}
Because 
\begin{equation*}
\frac{\widehat{G}_{J_{0},J_{L}}^{M}\left( \alpha \right) }{%
G_{J_{0},J_{L}}^{M}\left( \alpha \right) }=\frac{1}{G_{0}}\frac{%
\sum_{j=J_{0}}^{J_{L}}\frac{\Lambda _{j;p}}{K_{j}^{M}\left( \alpha \right) }%
}{\sum_{j=J_{0}}^{J_{L}}N_{j}\frac{K_{j}^{M}\left( \alpha _{0}\right) }{%
K_{j}^{M}\left( \alpha \right) }}\text{ ,}
\end{equation*}%
it follows from Lemma \ref{Gbehave} that 
\begin{equation*}
\mathbb{E}\left( \frac{\widehat{G}_{J_{0},J_{L}}^{M}\left( \alpha \right) }{%
G_{J_{0},J_{L}}^{M}\left( \alpha \right) }-1\right) =0\text{ ,}
\end{equation*}%
while%
\begin{equation*}
Var\left( \frac{\widehat{G}_{J_{0},J_{L}}^{M}\left( \alpha \right) }{%
G_{J_{0},J_{L}}^{M}\left( \alpha \right) }-1\right) =O\left(
B^{-2J_{L}}\right) \text{ .}
\end{equation*}%
Indeed, we have%
\begin{equation*}
Var\left( \frac{\widehat{G}_{J_{0},J_{L}}^{M}\left( \alpha \right) }{%
G_{J_{0},J_{L}}^{M}\left( \alpha \right) }\right) =\left(
G_{J_{0},J_{L}}^{M}\left( \alpha \right) \right) ^{-2}Var\left( \widehat{G}%
_{J_{0},J_{L}}^{M}\left( \alpha \right) \right)
\end{equation*}%
\begin{equation*}
=\frac{\left( B^{\left( 2+\alpha -\alpha _{0}\right) }-1\right) ^{2}}{\left(
B^{2\left( 1+\alpha -\alpha _{0}\right) }-1\right) }\frac{I_{p}\left(
B,\alpha -\alpha _{0}\right) ^{2}}{B^{\alpha _{0}-\alpha }4^{2p-\alpha _{0}+%
\frac{\alpha }{2}}}\frac{\Gamma \left( 4p+1-\alpha _{0}\right) }{\Gamma
^{2}\left( 2p-\frac{\alpha }{2}+1\right) }B^{-2J_{L}}+o_{J_{L}}\left(
B^{-2J_{L}}\right)
\end{equation*}%
By Chebyshev's inequality we have 
\begin{equation*}
\frac{\widehat{G}_{J_{0},J_{L}}^{M}\left( \alpha \right) }{%
G_{J_{0},J_{L}}^{M}\left( \alpha \right) }-1\rightarrow _{p}0\text{ ,}
\end{equation*}%
and from Slutsky's Lemma%
\begin{equation*}
\log \left( \frac{\widehat{G}_{J_{0},J_{L}}^{M}\left( \alpha \right) }{%
G_{J_{0},J_{L}}^{M}\left( \alpha \right) }-1\right) \rightarrow _{p}0\text{ .%
}
\end{equation*}%
On the other hand, by Lemma \ref{unifconvergence}%
\begin{equation*}
\sup \left\vert \frac{\widehat{G}_{J_{0},J_{L}}^{M}\left( \alpha \right) }{%
G_{J_{0},J_{L}}^{M}\left( \alpha \right) }-1\right\vert \rightarrow _{p}0%
\text{ ,}
\end{equation*}%
as we claimed.
\end{proof}

Our purpose now is to study an asymptotic convergence of estimator $\widehat{%
\alpha }_{J_{0},J_{L}}^{M}$.

\begin{theorem}
\label{prop1}Let $0<4p-\alpha _{0}\leq Q$. Assume Condition \ref{REGULNEED0}
holds with . Hence we have%
\begin{equation}
B^{J_{L}}\left( \widehat{\alpha }_{J}^{M}-\alpha _{0}\right) =O_{p}\left(
1\right) \text{ , as}\ J_{L}\rightarrow \infty \text{ .}  \label{asyngauss0}
\end{equation}%
Under Condition \ref{REGULNEED2}, we have 
\begin{equation}
B^{J_{L}}\left( \widehat{\alpha }_{J}^{M}-\alpha _{0}\right) \rightarrow
_{p}-\frac{I_{p,0}\left( \alpha _{0}+1\right) }{I_{p,0}\left( \alpha
_{0}\right) }\frac{\log B}{\left( B+1\right) }\kappa \text{ .}
\label{asyngauss2}
\end{equation}%
Under Condition \ref{REGULNEED3}, we have%
\begin{equation}
B^{J_{L}}\left( \widehat{\alpha }_{J}^{M}-\alpha _{0}\right) \rightarrow
_{d}N\left( 0,\varsigma _{0}^{2}\right)  \label{asyngauss3}
\end{equation}%
where 
\begin{eqnarray*}
\varsigma _{0}^{2} &:&=\varsigma _{0}^{2}\left( p,B,\alpha _{0}\right)
=\sigma _{0}^{2}\left( 1+\widetilde{\tau }\right) \frac{\left(
B^{2}-1\right) ^{3}}{B^{4}\log ^{2}B}\text{ ,} \\
\sigma _{0}^{2} &:&=\sigma _{0}^{2}\left( p,\alpha _{0}\right) =\frac{2}{%
2^{4p-\alpha _{0}}}\frac{\Gamma \left( 4p+1-\alpha _{0}\right) }{\left(
\Gamma \left( 2p+1-\alpha _{0}/2\right) \right) ^{2}}\text{ ,} \\
\widetilde{\tau } &:&=\frac{1}{B^{2}}\left( \left( B^{2}+1\right) \left( 
\widetilde{\tau }_{0}+\widetilde{\tau }_{2}+\widetilde{\tau }_{0}\widetilde{%
\tau }_{2}\right) +2\widetilde{\tau }_{1}-\widetilde{\tau }_{1}^{2}\right) 
\text{ ,}
\end{eqnarray*}%
\newline
with $\widetilde{\tau }_{0,}\widetilde{\tau }_{1}$ $\widetilde{\tau }_{2}$
as defined in Lemma \ref{Lemmasumtau}.
\end{theorem}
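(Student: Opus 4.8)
The plan is to run the standard minimum-contrast (Whittle-type) argument of \cite{Robinson}, \cite{dlm} and \cite{dlm2}. By Theorem \ref{Mconsistency}, $\widehat{\alpha}_{J_0,J_L}^M$ is eventually an interior minimizer of the concentrated contrast $R_{J_0,J_L}^M(\alpha)$, so the first-order condition $\frac{d}{d\alpha}R_{J_0,J_L}^M(\widehat{\alpha}_{J_0,J_L}^M)=0$ holds and a mean-value expansion around $\alpha_0$ gives
\begin{equation*}
B^{J_L}\bigl(\widehat{\alpha}_{J_0,J_L}^M-\alpha_0\bigr)=-\,\frac{B^{J_L}\,\tfrac{d}{d\alpha}R_{J_0,J_L}^M(\alpha_0)}{\tfrac{d^{2}}{d\alpha^{2}}R_{J_0,J_L}^M(\bar{\alpha})}\,,\qquad \bar{\alpha}\ \text{between}\ \widehat{\alpha}_{J_0,J_L}^M\ \text{and}\ \alpha_0 .
\end{equation*}
All three claims (\ref{asyngauss0})--(\ref{asyngauss3}) will follow by showing the denominator stays bounded away from $0$ and by analysing the numerator, which is tight after rescaling, carries an explicit deterministic bias under Condition \ref{REGULNEED2}, and is asymptotically Gaussian under Condition \ref{REGULNEED3}.

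\emph{The denominator.} By the envelope identity $R_{J_0,J_L}^M(\alpha)=R_{J_0,J_L}^M(\alpha,\widehat{G}_{J_0,J_L}^M(\alpha))$ one gets $\frac{d}{d\alpha}R_{J_0,J_L}^M(\alpha)=\sum_j\widehat{w}_j(\alpha)U_{j;1}(\alpha)-\sum_j\tfrac{N_j}{\sum_j N_j}U_{j;1}(\alpha)$ with $\widehat{w}_j(\alpha)\propto\widehat{\Lambda}_{j;p}/K_j^M(\alpha)$; differentiating once more, and noting that $U_{j;1}'=U_{j;2}-U_{j;1}^{2}$ is asymptotically constant in $j$ by (\ref{UM1})--(\ref{UM2}) (so it cancels between the two weighted averages), shows that $\frac{d^{2}}{d\alpha^{2}}R_{J_0,J_L}^M(\alpha)$ equals, up to $o_p(1)$, the $\widehat{w}$-weighted variance of $\{U_{j;1}(\alpha)\}_j$. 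By Lemma \ref{unifconvergence} the random weights $\widehat{w}_j$ concentrate on $N_j/\sum_j N_j$, and since $U_{j;1}(\alpha)=j\log B+I_{p,1}(\alpha)/I_{p,0}(\alpha)+o_j(1)$ while $N_j=c_BB^{2j}$, the weighted variance converges; using consistency to evaluate the limit at $\alpha_0$,
\begin{equation*}
\tfrac{d^{2}}{d\alpha^{2}}R_{J_0,J_L}^M(\bar{\alpha})\ \longrightarrow_p\ \frac{B^{2}\log^{2}B}{(B^{2}-1)^{2}}\ >0 .
\end{equation*}

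\emph{The numerator.} Evaluating at $\alpha_0$ and using $\sum_j\bigl(\widehat{\Lambda}_{j;p}/K_j^M(\alpha_0)-\widehat{G}_{J_0,J_L}^M(\alpha_0)N_j\bigr)=0$ to recentre $U_{j;1}(\alpha_0)$ by its $N_j$-weighted mean $\overline{U}$,
\begin{equation*}
\tfrac{d}{d\alpha}R_{J_0,J_L}^M(\alpha_0)=\frac{1}{\widehat{G}_{J_0,J_L}^M(\alpha_0)\sum_j N_j}\sum_{j=J_0}^{J_L}\bigl(U_{j;1}(\alpha_0)-\overline{U}\bigr)\left(\frac{\widehat{\Lambda}_{j;p}}{K_j^M(\alpha_0)}-G_0N_j\right),
\end{equation*}
and one splits $\widehat{\Lambda}_{j;p}/K_j^M(\alpha_0)-G_0N_j$ into its mean and its centred part. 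For the mean part, Condition \ref{REGULNEED2} gives $\mathbb{E}\widehat{\Lambda}_{j;p}-G_0N_jK_j^M(\alpha_0)=G_0\kappa\,N_jK_j^M(\alpha_0+1)+O(\cdots)$, and since $K_j^M(\alpha_0+1)/K_j^M(\alpha_0)\sim\bigl(I_{p,0}(\alpha_0+1)/I_{p,0}(\alpha_0)\bigr)B^{-j}$, the $j$-sum produces a deterministic term of exact order $B^{-J_L}$ whose $B^{J_L}$-rescaled limit, divided by the denominator limit above, is precisely the right-hand side of (\ref{asyngauss2}); under Condition \ref{REGULNEED3} this term is $o(B^{-J_L})$ and disappears. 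For the centred part, $\widehat{\Lambda}_{j;p}-\mathbb{E}\widehat{\Lambda}_{j;p}=\sum_k(\beta_{jk;p}^{2}-\mathbb{E}\beta_{jk;p}^{2})$ has variances and covariances given by Lemma \ref{mexicangavarini} and (\ref{covnormalized})--(\ref{covcov}); summed against the weights $(U_{j;1}(\alpha_0)-\overline{U})\asymp(j-\overline{j})\log B$ and evaluated through Lemmas \ref{Lemmasumtau} and \ref{prop27}, this has variance $\asymp B^{-2J_L}$ with a $B^{2J_L}$-rescaled limit equal to a positive multiple of $\sigma_0^{2}(1+\widetilde{\tau})$ — the multiple, together with the denominator limit, reproducing exactly $\varsigma_0^{2}$. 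This already yields $\tfrac{d}{d\alpha}R_{J_0,J_L}^M(\alpha_0)=O_p(B^{-J_L})$ and hence (\ref{asyngauss0}); under Condition \ref{REGULNEED2} it yields (\ref{asyngauss2}), and under Condition \ref{REGULNEED3}, adding a central limit theorem for the centred part, (\ref{asyngauss3}).

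\emph{Main obstacle.} The hard step is that last central limit theorem: $B^{J_L}\tfrac{d}{d\alpha}R_{J_0,J_L}^M(\alpha_0)$, net of its deterministic bias, is a weighted sum of \emph{dependent} quadratic forms $\widehat{\Lambda}_{j;p}=\sum_k\beta_{jk;p}^{2}$ in jointly Gaussian variables, with mass concentrated on a geometrically thin band of top levels $j\approx J_L$. One must (i) control the inter-scale dependence — afforded by the correlation bound of Lemma \ref{Mcorr} and the decay of $\tau_B(\Delta j)$, which is summable exactly because $4p-\alpha_0>0$ — and (ii) verify a Lindeberg- or fourth-cumulant-type condition for these $\chi^{2}$-like summands, so the limit is Gaussian; this can be carried out as in \cite{dlm2} via cumulant/diagram estimates for the $\beta_{jk;p}$ together with Lemma \ref{mexicangavarini}. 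Two subsidiary points are the replacement of $\widehat{G}_{J_0,J_L}^M(\alpha_0)$ by $G_0$ in the denominator of $\tfrac{d}{d\alpha}R_{J_0,J_L}^M(\alpha_0)$ (negligible by Lemma \ref{Gbehave} and Slutsky) and the uniform control of $\tfrac{d^{2}}{d\alpha^{2}}R_{J_0,J_L}^M$ on a shrinking neighbourhood of $\alpha_0$, so that $\bar{\alpha}$ may be replaced by $\alpha_0$ in the limit — both furnished by Lemma \ref{unifconvergence} and Theorem \ref{Mconsistency}.
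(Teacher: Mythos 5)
Your proposal is correct and follows essentially the same route as the paper: a mean-value expansion of the first-order condition, yielding $B^{J_L}(\widehat{\alpha}^M_{J_0,J_L}-\alpha_0)=-B^{J_L}S^M_{J_0,J_L}(\alpha_0)/Q^M_{J_0,J_L}(\bar{\alpha})$, with the Hessian shown to converge in probability to $B^2\log^2 B/(B^2-1)^2$ (the paper's Lemma \ref{Mconsistency2}) and the score analysed through its bias under Condition \ref{REGULNEED2}, its variance via Lemmas \ref{Lemmasumtau} and \ref{prop27}, and a fourth-cumulant argument for asymptotic Gaussianity (the paper's Lemma \ref{Mconsistency1}, which invokes Lemma \ref{cumulants} and the second-chaos results of Nourdin--Peccati). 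The decomposition details differ only cosmetically (your envelope/recentring presentation versus the paper's explicit $\overline{S}$ and $A+B+C$ splitting), so no substantive gap remains.
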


\begin{proof}
Again we shall focus on the Taylor expansion%
\begin{equation*}
0=\frac{d}{d\alpha }R_{J_{0},J_{L}}^{M}\left( \alpha \right) |_{\alpha =%
\widehat{\alpha }_{J_{0},J_{L}}^{M}}=S_{J_{0},J_{L}}^{M}\left( \alpha
\right) |_{\alpha =\alpha _{0}}+Q_{J_{0},J_{L}}^{M}\left( \alpha \right)
|_{\alpha =\overline{\alpha }}\left( \widehat{\alpha }_{J_{0},J_{L}}^{M}-%
\alpha \right) \text{,}
\end{equation*}%
\begin{eqnarray*}
S_{J_{0},J_{L}}^{M}\left( \alpha \right) &=&\frac{d}{d\alpha }%
R_{J_{0},J_{L}}^{M}\left( \alpha \right) \text{ ;} \\
Q_{J_{0},J_{L}}^{M}\left( \alpha \right) &=&\frac{d^{2}}{d\alpha ^{2}}%
R_{J_{0},J_{L}}^{M}\left( \alpha \right) \text{ ,}
\end{eqnarray*}%
where $\overline{\alpha }\in \left[ \alpha _{0}-\delta _{J_{L}},\alpha
_{0}+\delta _{J_{L}}\right] ,$ and $\delta _{J}\rightarrow _{p}0$ as $%
J_{L}\rightarrow \infty $ by Lemma \ref{Mconsistency}. The equation above
then leads to%
\begin{equation*}
\left( \widehat{\alpha }_{J_{0},J_{L}}^{M}-\alpha _{0}\right)
=-S_{J_{0},J_{L}}^{M}\left( \alpha _{0}\right) \left(
Q_{J_{0},J_{L}}^{M}\left( \overline{\alpha }\right) \right) ^{-1}\text{ .}
\end{equation*}

The proof is readily completed by combining Lemma \ref{Mconsistency1} and %
\ref{Mconsistency2}.
\end{proof}

\begin{lemma}
\label{Mconsistency1}Assume Condition \ref{REGULNEED2} holds with $%
0<4p+2-\alpha _{0}\leq M$, we have 
\begin{equation*}
B^{J_{L}}S_{J_{0},J_{L}}^{M}\left( \alpha _{0}\right) \rightarrow
_{p}-\kappa \frac{I_{p,0}\left( \alpha _{0}+1\right) }{I_{p,0}\left( \alpha
_{0}\right) }\frac{\log B}{\left( B+1\right) }\text{ ;}
\end{equation*}%
if Condition \ref{REGULNEED3} holds we have 
\begin{equation*}
B^{J_{L}}S_{J_{0},J_{L}}^{M}\left( \alpha _{0}\right) \rightarrow
_{D}N\left( 0,\sigma _{0}^{2}\left( 1+\widetilde{\tau }\right) \frac{\log
^{2}B}{B^{2}-1}\right) \text{ .}
\end{equation*}%
.
\end{lemma}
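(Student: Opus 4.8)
The plan is to write the score $S_{J_0,J_L}^{M}(\alpha_0)=\frac{d}{d\alpha}R_{J_0,J_L}^{M}(\alpha)\big|_{\alpha=\alpha_0}$ in a form that isolates a deterministic mean from a centered stochastic part, and then to treat the stochastic part by a central limit theorem for a sum of independent variables indexed by the multipole $l$. \emph{Exact form of the score.} Differentiating
\[
R_{J_0,J_L}^{M}(\alpha)=1+\log\widehat G_{J_0,J_L}^{M}(\alpha)-\frac{1}{\sum_jN_j}\sum_{j=J_0}^{J_L}\sum_k\log\frac{\beta_{jk;p}^{2}}{K_j^{M}(\alpha)},
\]
using $\frac{d}{d\alpha}\log K_j^{M}(\alpha)=K_{j,1}^{M}(\alpha)/K_j^{M}(\alpha)=-U_{j;1}(\alpha)$ and $\frac{d}{d\alpha}\widehat G_{J_0,J_L}^{M}(\alpha)=(\sum_jN_j)^{-1}\sum_j(\Lambda_{j;p}/K_j^{M}(\alpha))U_{j;1}(\alpha)$, one obtains, with $V_j:=\Lambda_{j;p}/(G_0K_j^{M}(\alpha_0))$ and $\bar U:=(\sum_jN_j)^{-1}\sum_jN_jU_{j;1}(\alpha_0)$,
\[
S_{J_0,J_L}^{M}(\alpha_0)=\frac{\sum_jV_jU_{j;1}(\alpha_0)}{\sum_jV_j}-\bar U=\frac{\sum_{j=J_0}^{J_L}(V_j-N_j)\bigl(U_{j;1}(\alpha_0)-\bar U\bigr)}{\sum_{j=J_0}^{J_L}V_j},
\]
the last identity because $\sum_jN_j(U_{j;1}(\alpha_0)-\bar U)=0$. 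Since $(\sum_jV_j)/(\sum_jN_j)=\widehat G_{J_0,J_L}^{M}(\alpha_0)/G_0\rightarrow_{p}1$ with variance $O(B^{-2J_L})$ by Lemma~\ref{Gbehave}, Slutsky's lemma reduces the problem to the limit of $B^{J_L}(\sum_jN_j)^{-1}\sum_j(V_j-N_j)(U_{j;1}(\alpha_0)-\bar U)$.

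\emph{Deterministic part.} Split $V_j-N_j=N_jb_j+\widetilde W_j$ with $b_j:=\mathbb E(V_j)/N_j-1$ deterministic and $\widetilde W_j:=V_j-\mathbb E(V_j)$ centered. From $\mathbb E(\Lambda_{j;p})=\sum_lf_p^{2}(l/B^{j})(2l+1)C_l$ and Condition~\ref{REGULNEED2}, $b_j=\kappa K_j^{M}(\alpha_0+1)/K_j^{M}(\alpha_0)+O(K_j^{M}(\alpha_0+2)/K_j^{M}(\alpha_0))$, which by Proposition~\ref{propsumKj} equals $\kappa\frac{I_{p,0}(\alpha_0+1)}{I_{p,0}(\alpha_0)}B^{-j}+O(B^{-2j})$; under Condition~\ref{REGULNEED3} instead $b_j=o(B^{-j})$. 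Using $N_j=c_BB^{2j}$ and, from (\ref{UM1}), $U_{j;1}(\alpha_0)-\bar U=(j-J_L)\log B+O(1)$, summing the geometric series gives
\[
\frac{1}{\sum_jN_j}\sum_{j=J_0}^{J_L}N_jb_j\bigl(U_{j;1}(\alpha_0)-\bar U\bigr)=-\kappa\,\frac{I_{p,0}(\alpha_0+1)}{I_{p,0}(\alpha_0)}\,\frac{\log B}{B+1}\,B^{-J_L}+o(B^{-J_L})
\]
under Condition~\ref{REGULNEED2}, and $=o(B^{-J_L})$ under Condition~\ref{REGULNEED3}; after multiplication by $B^{J_L}$ this produces the deterministic contribution to the two assertions.

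\emph{Stochastic part, CLT.} Since $\Lambda_{j;p}=\sum_lf_p^{2}(l/B^{j})\sum_{m=-l}^{l}|a_{lm}|^{2}$, set $Z_l:=\sum_{m=-l}^{l}(|a_{lm}|^{2}-C_l)$; the $Z_l$ are independent, centered, with $\mathrm{Var}(Z_l)=2(2l+1)C_l^{2}$ and $\mathbb E|Z_l|^{4}=O(l^{2}C_l^{4})$. Exchanging summations,
\[
\frac{1}{\sum_jN_j}\sum_{j=J_0}^{J_L}\widetilde W_j\bigl(U_{j;1}(\alpha_0)-\bar U\bigr)=\sum_{l\geq1}w_lZ_l,\qquad w_l:=\frac{1}{\sum_jN_j}\sum_{j=J_0}^{J_L}\frac{f_p^{2}(l/B^{j})\bigl(U_{j;1}(\alpha_0)-\bar U\bigr)}{G_0K_j^{M}(\alpha_0)},
\]
a sum of independent random variables. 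Its variance $(\sum_jN_j)^{-2}\sum_{j,j'}(U_{j;1}(\alpha_0)-\bar U)(U_{j';1}(\alpha_0)-\bar U)\mathrm{Cov}(\widetilde W_j,\widetilde W_{j'})$ is evaluated by inserting the covariance asymptotics (\ref{covnormalized}) of Lemma~\ref{mexicangavarini}, factoring the $j$-sum (dominated by $j\simeq J_L$, where $U_{j;1}(\alpha_0)-\bar U\sim(j-J_L)\log B$) from the $\Delta j$-sum (summed through Lemmas~\ref{Lemmasumtau} and~\ref{prop27}, giving the factor $1+\widetilde\tau$), whence $B^{2J_L}\mathrm{Var}\rightarrow\sigma_0^{2}(1+\widetilde\tau)\frac{\log^{2}B}{B^{2}-1}$. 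Asymptotic normality then follows from the Lyapunov theorem for independent summands, the ratio $\sum_lw_l^{4}\mathbb E|Z_l|^{4}/(\sum_lw_l^{2}\mathrm{Var}(Z_l))^{2}$ being $O(B^{-J_L})\rightarrow0$ because only $l\lesssim B^{J_L}$ contribute and each $w_l^{2}lC_l^{2}$ is uniformly of order $B^{-3J_L}$. Combining the two parts proves the Lemma: under Condition~\ref{REGULNEED3} the deterministic piece vanishes and $B^{J_L}S_{J_0,J_L}^{M}(\alpha_0)\rightarrow_{D}N(0,\sigma_0^{2}(1+\widetilde\tau)\frac{\log^{2}B}{B^{2}-1})$, while under Condition~\ref{REGULNEED2} the deterministic piece supplies the asymptotic mean.

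The hard part will be the stochastic step: passing to the exactly independent series $\sum_lw_lZ_l$, checking Lyapunov's condition uniformly in $J_L$, and above all carrying the double summation over resolution levels $j$ and lags $\Delta j$ through Lemmas~\ref{Lemmasumtau}, \ref{prop27} and~\ref{propsumKj} so that the limiting variance comes out exactly as $\sigma_0^{2}(1+\widetilde\tau)\frac{\log^{2}B}{B^{2}-1}$. The bias computation in the deterministic step is comparatively routine but still requires keeping the lower-order terms in $U_{j;1}$ and $K_j^{M}$ under control.
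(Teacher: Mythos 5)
Your proposal is correct and, up to the final limit-theorem step, follows the same architecture as the paper: your identity $S_{J_0,J_L}^{M}(\alpha_0)=\bigl(\sum_j(V_j-N_j)(U_{j;1}(\alpha_0)-\bar U)\bigr)/\sum_jV_j$ is algebraically the same object as the paper's representation $S_{J_0,J_L}^{M}(\alpha_0)=\frac{G_0}{\widehat G_{J_0,J_L}^{M}(\alpha_0)}\overline S_{J_0,J_L}^{M}(\alpha_0)$ (centering with $\widehat G/G_0$ is the same as centering $U_{j;1}$ at $\bar U$), the Slutsky reduction plays the role of Lemma \ref{unifconvergence}, and the bias and variance are evaluated through the same ingredients (Proposition \ref{propsumKj}, Proposition \ref{prop27}, Lemma \ref{mexicangavarini}, Lemma \ref{Lemmasumtau}). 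The genuine difference is the Gaussianity step: the paper keeps the level-indexed quadratic statistic, bounds its fourth-order cumulants (Lemma \ref{cumulants}) and invokes the Nourdin--Peccati fourth-moment theorem on the second Wiener chaos, whereas you pass to the exact multipole representation $\sum_l w_lZ_l$ with $Z_l=\sum_m(|a_{lm}|^2-C_l)$ independent across $l$ under Gaussianity and isotropy, and close with the Lyapunov CLT for a triangular array; your order count ($w_l^2\,\mathrm{Var}(Z_l)=O(B^{-3J_L})$ over the effective range $l\lesssim B^{J_L}$, Lyapunov ratio $O(B^{-J_L})$) is consistent. Your route is more elementary and makes the independence structure explicit; the paper's fourth-moment route avoids diagonalizing in $l$ and thus transfers to settings where that exact representation is not available. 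Two cautions: like the paper, you silently identify $\sum_k\beta_{jk;p}^2$ with $\widehat\Lambda_{j;p}=\sum_lf_p^2(l/B^j)(2l+1)\widehat C_l$, which is only the nearly-tight-frame approximation, so this convention should be stated; and in the bias step you assert the constant $\log B/(B+1)$ without displaying the cancellation of the $J_L$-terms in the geometric sums, which is precisely where that constant is decided, so that computation should be written out rather than quoted.
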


\begin{proof}
Note, first of all, that, as in \cite{dlm2}, the proof of (\ref{asyngauss2})
is totally equivalent to the case of (\ref{asyngauss0}). First of all, we
can rewrite $S_{J_{0},J_{L}}^{M}\left( \alpha \right) $ as follows.%
\begin{eqnarray*}
S_{J_{0},J_{L}}^{M}\left( \alpha \right) &=&\frac{d}{d\alpha }\log \widehat{G%
}_{J_{0},J_{L}}^{M}\left( \alpha \right) -\frac{d}{d\alpha }\frac{1}{%
\sum_{j=J_{0}}^{J_{L}}N_{j}}\sum_{j=J_{0}}^{J_{L}}\sum_{k}\log \frac{\beta
_{jk;p}^{2}}{K_{j}^{M}\left( \alpha \right) } \\
&=&\frac{\widehat{G}_{J_{0},J_{L},1}^{M}\left( \alpha \right) }{\widehat{G}%
_{J_{0},J_{L}}^{M}\left( \alpha \right) }-\frac{1}{%
\sum_{j=J_{0}}^{J_{L}}N_{j}}\sum_{j=J_{0}}^{J_{L}}\sum_{k}\frac{%
K_{j,1}^{M}\left( \alpha \right) }{K_{j}^{M}\left( \alpha \right) } \\
&=&\frac{1}{\sum_{j=J_{0}}^{J_{L}}N_{j}}\sum_{j=J_{0}}^{J_{L}}\left( \frac{%
K_{j,1}^{M}\left( \alpha \right) }{K_{j}^{M}\left( \alpha \right) }\right)
\sum_{k}\left( \frac{\beta _{jk;p}^{2}}{\widehat{G}_{J_{0},J_{L}}^{M}\left(
\alpha \right) K_{j}^{M}\left( \alpha \right) }-1\right) \text{ .}
\end{eqnarray*}%
We can easily see that 
\begin{equation*}
S_{J_{0},J_{L}}^{M}\left( \alpha _{0}\right) =\frac{G_{0}}{\widehat{G}%
_{J_{0},J_{L}}^{M}\left( \alpha _{0}\right) }\overline{S}_{J_{0},J_{L}}^{M}%
\left( \alpha _{0}\right) \text{ ,}
\end{equation*}%
where%
\begin{equation*}
\overline{S}_{J_{0},J_{L}}^{M}\left( \alpha _{0}\right) =\frac{1}{%
\sum_{j=J_{0}}^{J_{L}}N_{j}}\sum_{j=J_{0}}^{J_{L}}\frac{K_{j,1}^{M}\left(
\alpha _{0}\right) }{K_{j}^{M}\left( \alpha _{0}\right) }\sum_{k}\left( 
\frac{\beta _{jk;p}^{2}}{G_{0}K_{j}^{M}\left( \alpha _{0}\right) }-\frac{%
\widehat{G}_{J_{0},J_{L}}^{M}\left( \alpha _{0}\right) }{G_{0}}\right) \text{
}
\end{equation*}%
and from Lemma \ref{unifconvergence} we have 
\begin{equation*}
\frac{G_{0}}{\widehat{G}_{J_{0},J_{L},1}^{M}\left( \alpha _{0}\right) }%
\rightarrow _{p}1\text{ .}
\end{equation*}%
Under Condition \ref{REGULNEED2} we have%
\begin{equation*}
\lim_{J_{L}\rightarrow \infty }B^{J_{L}}\mathbb{E}\left( \overline{S}%
_{J_{0},J_{L}}^{M}\left( \alpha _{0}\right) \right)
\end{equation*}%
\begin{eqnarray*}
&=&\lim_{J_{L}\rightarrow \infty }\frac{B^{J_{L}}}{%
\sum_{j=J_{0}}^{J_{L}}N_{j}}\sum_{j=J_{0}}^{J_{L}}\left( -\frac{%
K_{j,1}^{M}\left( \alpha _{0}\right) }{K_{j}^{M}\left( \alpha _{0}\right) }%
\right) \left( \frac{\mathbb{E}\left( \widehat{\Lambda }_{j;p}\right) }{%
G_{0}K_{j}^{M}\left( \alpha _{0}\right) }-\frac{N_{j}}{%
\sum_{j=J_{0}}^{J_{L}}N_{j}}\sum_{j=J_{0}}^{J_{L}}\frac{\mathbb{E}\left( 
\widehat{\Lambda }_{j;p}\right) }{G_{0}K_{j}^{M}\left( \alpha _{0}\right) }%
\right) \\
&=&\lim_{J_{L}\rightarrow \infty }\frac{I_{p,0}\left( \alpha _{0}+1\right) }{%
I_{p,0}\left( \alpha _{0}\right) }\frac{\kappa B^{J_{L}}}{%
\sum_{j=J_{0}}^{J_{L}}B^{2j}}\sum_{j=J_{0}}^{J_{L}}\log B^{j}\cdot
B^{2j}\left( B^{-j}-\frac{1}{\sum_{j=J_{0}}^{J_{L}}B^{2j}}%
\sum_{j=J_{0}}^{J_{L}}B^{j}\right) +o_{J_{L}}\left( 1\right) \\
&=&\lim_{J_{L}\rightarrow \infty }-\kappa \frac{I_{p,0}\left( \alpha
_{0}+1\right) }{I_{p,0}\left( \alpha _{0}\right) }\frac{\log B}{\left(
B+1\right) }+o_{J_{L}}\left( 1\right) \text{ ;}
\end{eqnarray*}%
while under Condition \ref{REGULNEED3} we find immediately 
\begin{equation*}
\mathbb{E}\left( \overline{S}_{J_{0},J_{L}}^{M}\left( \alpha _{0}\right)
\right) =o_{J_{L}}\left( 1\right) \text{ .}
\end{equation*}%
In order to compute the variance of $\overline{S}_{J_{0},J_{L}}^{M}\left(
\alpha _{0}\right) $, we split it into 3 terms (see again \cite{dlm2}):%
\begin{equation*}
Var\left( \overline{S}_{J_{0},J_{L}}^{M}\left( \alpha _{0}\right) \right)
=A+B+C\text{ ,}
\end{equation*}%
where%
\begin{equation*}
A=\frac{1}{\left( \sum_{j=J_{0}}^{J_{L}}N_{j}\right) ^{2}}%
\sum_{j_{1}}\sum_{j_{2}}\left( \frac{K_{j_{1},1}^{M}\left( \alpha
_{0}\right) }{K_{j_{1}}^{M}\left( \alpha _{0}\right) }\frac{%
K_{j_{2},1}^{M}\left( \alpha _{0}\right) }{K_{j_{2}}^{M}\left( \alpha
_{0}\right) }\right) Cov\left( \frac{\sum_{k_{1}}\beta _{j_{1}k_{1};p}^{2}}{%
G_{0}K_{j_{1}}^{M}\left( \alpha _{0}\right) },\frac{\sum_{k_{2}}\beta
_{j_{2}k_{2};p}^{2}}{G_{0}K_{j_{2}}^{M}\left( \alpha _{0}\right) }\right) 
\text{ ,}
\end{equation*}%
\begin{equation*}
B=\frac{1}{\left( \sum_{j=J_{0}}^{J_{L}}N_{j}\right) ^{2}}%
\sum_{j_{1}}\sum_{j_{2}}\left( \frac{K_{j_{1},1}^{M}\left( \alpha
_{0}\right) }{K_{j_{1}}^{M}\left( \alpha _{0}\right) }\frac{%
K_{j_{2},1}^{M}\left( \alpha _{0}\right) }{K_{j_{2}}^{M}\left( \alpha
_{0}\right) }\right) N_{j_{1}}N_{j_{2}}Var\left( \frac{\widehat{G}%
_{J_{0},J_{L}}^{M}\left( \alpha _{0}\right) }{G_{0}}\right) \text{ ,}
\end{equation*}%
\begin{equation*}
C=\frac{-2}{\left( \sum_{j=J_{0}}^{J_{L}}N_{j}\right) ^{2}}%
\sum_{j_{1}}\sum_{j_{2}}\left( \frac{K_{j_{1},1}^{M}\left( \alpha
_{0}\right) }{K_{j_{1}}^{M}\left( \alpha _{0}\right) }\frac{%
K_{j_{2},1}^{M}\left( \alpha _{0}\right) }{K_{j_{2}}^{M}\left( \alpha
_{0}\right) }\right) Cov\left( \frac{\sum_{k_{1}}\beta _{j_{1}k_{1};p}^{2}}{%
G_{0}K_{j}^{M}\left( \alpha _{0}\right) },N_{j_{2}}\frac{\widehat{G}%
_{J_{0},J_{L}}^{M}\left( \alpha _{0}\right) }{G_{0}}\right) \text{ .}
\end{equation*}%
By fixing $j=j_{1}$, $\Delta j=j_{2}-j_{1}$, we have: 
\begin{eqnarray*}
A &=&\frac{1}{\left( \sum_{j=J_{0}}^{J_{L}}N_{j}\right) ^{2}}%
\sum_{j=J_{0}}^{J_{L}}\sum_{\Delta j=J_{0}-j}^{J_{L}-j}\left( \frac{%
K_{j,1}^{M}\left( \alpha _{0}\right) }{K_{j}^{M}\left( \alpha _{0}\right) }%
\frac{K_{j+\Delta j,1}^{M}\left( \alpha _{0}\right) }{K_{j+\Delta
j}^{M}\left( \alpha _{0}\right) }\right) \\
&&\times Cov\left( \frac{\sum_{k_{1}}\beta _{jk_{1};p}^{2}}{%
G_{0}K_{j}^{M}\left( \alpha _{0}\right) },\frac{\sum_{k_{2}}\beta _{j+\Delta
jk_{2};p}^{2}}{G_{0}K_{j+\Delta j}^{M}\left( \alpha _{0}\right) }\right) \\
&=&\frac{\left( B^{2}-1\right) \log ^{2}B}{B^{2}B^{2J_{L}}}\sigma
_{0}^{2}\left( \left( 1+\tau _{0}\right) J_{L}^{2}-\left( \frac{2}{B^{2}-1}%
\left( 1+\widetilde{\tau }_{1}\right) \right) J_{L}\right. \\
&&+\left. \frac{B^{2}+1}{\left( B^{2}-1\right) ^{2}}\left( 1+\widetilde{\tau 
}_{2}\right) \right) +o\left( B^{-2J_{L}}\right) \text{ ,}
\end{eqnarray*}%
by applying Lemmas \ref{Lemmasumtau} and \ref{lemmasums}. On the other hand,
from Lemma \ref{Gbehave}, we obtain%
\begin{eqnarray*}
B &=&\frac{1}{\left( \sum_{j=J_{0}}^{J_{L}}N_{j}\right) ^{2}}%
\sum_{j_{1}}\sum_{j_{2}}\left( \frac{K_{j_{1},1}^{M}\left( \alpha
_{0}\right) }{K_{j_{1}}^{M}\left( \alpha _{0}\right) }\frac{%
K_{j_{2},1}^{M}\left( \alpha _{0}\right) }{K_{j_{2}}^{M}\left( \alpha
_{0}\right) }\right) N_{j_{1}}N_{j_{2}}Var\left( \frac{\widehat{G}%
_{J_{0},J_{L}}^{M}\left( \alpha _{0}\right) }{G_{0}}\right) \\
&=&\frac{\sigma _{0}^{2}}{\left( \sum_{j=J_{0}}^{J_{L}}B^{2j}\right) ^{4}}%
\left( \sum_{j_{1}=J_{0}}^{J_{L}}\log B^{j_{1}}B^{2j_{1}}\right) \left(
\sum_{j_{2}=J_{0}}^{J_{L}}\log B^{j_{2}}B^{2j_{2}}\right)
\sum_{j_{1}=J_{0}}^{J_{L}}B^{2j}\sum_{\Delta j=-J_{0}-j}^{J_{L}-j}B^{\alpha
_{0}\Delta j}\tau _{B}\left( \Delta j\right) \\
&=&\frac{\sigma _{0}^{2}}{\left( \sum_{j=J_{0}}^{J_{L}}B^{2j}\right) ^{4}}%
\left( \sum_{j=J_{0}}^{J_{L}}\log B^{j}B^{2j}\right) ^{2}\left( \frac{B^{2}}{%
B^{2}-1}\left( 1+\widetilde{\tau }_{0}\right) B^{2J_{L}}\right) +o\left(
B^{2J_{L}}\right) \\
&=&\frac{\left( B^{2}-1\right) \log ^{2}B}{B^{2}}\sigma
_{0}^{2}B^{-2J_{L}}\left( \left( J_{L}-\frac{1}{B^{2}-1}\right) ^{2}\left( 1+%
\widetilde{\tau }_{0}\right) \right) \text{ .}
\end{eqnarray*}%
Finally, we have that 
\begin{eqnarray*}
C &=&\frac{-2}{\left( \sum_{j=J_{0}}^{J_{L}}N_{j}\right) ^{3}}\left(
\sum_{j=J_{0}}^{J_{L}}\log B^{j}Cov\left( \frac{\sum_{k_{1}}\beta
_{j_{1}k_{1};p}^{2}}{G_{0}K_{j}^{M}\left( \alpha _{0}\right) }%
,\sum_{j3=J_{0}}^{J_{L}}\frac{\sum_{k_{1}}\beta _{j_{3}k_{1};p}^{2}}{%
G_{0}K_{j_{3}}^{M}\left( \alpha _{0}\right) }\right) \right) \\
&&\times \left( \sum_{j_{2}=J_{0}}^{J_{L}}B^{2j_{2}}\log B^{j_{2}}\right)
+o\left( B^{2J_{L}}\right) \\
&=&-2\sigma _{0}^{2}\frac{\left( B^{2}-1\right) \log ^{2}B}{B^{2}}%
B^{-2J_{L}}\left( \left( \left( 1+\widetilde{\tau }_{0}\right) J_{L}-\left( 
\frac{1}{B^{2}-1}\left( 1+\widetilde{\tau }_{1}\right) \right) \right)
\right. \\
&&\times \left. \left( J_{L}-\frac{1}{B^{2}-1}\right) \right) +o\left(
B^{2J_{L}}\right) \text{.}
\end{eqnarray*}%
Summing all these terms, we obtain 
\begin{equation*}
A+B+C=\sigma _{0}^{2}\frac{B^{2}\log ^{2}B}{\left( B^{2}-1\right) }\left( 1+%
\widetilde{\tau }\right) B^{-2J_{L}}+o\left( B^{2J_{L}}\right) \text{ .}
\end{equation*}%
We can use the Lemma \ref{Lemmasumtau} to observe that%
\begin{equation*}
Var\left( \overline{S}_{J_{0},J_{L}}^{M}\left( \alpha _{0}\right) \right) =%
\frac{\sigma _{0}^{2}\left( 1+\widetilde{\tau }\right) }{\left(
\sum_{j=J_{0}}^{J_{L}}B^{2j}\right) ^{3}}\left( Z_{J_{L}}+o_{J_{L}}\left(
1\right) \right) \text{.}
\end{equation*}%
Hence we have 
\begin{equation*}
\lim_{J_{L}\rightarrow \infty }B^{2J_{L}}Var\left( \overline{S}%
_{J_{0},J_{L}}^{M}\left( \alpha _{0}\right) \right) =\frac{\sigma
_{0}^{2}\left( 1+\widetilde{\tau }\right) B^{2}\log ^{2}B}{\left(
B^{2}-1\right) }\text{ .}
\end{equation*}%
To prove (\ref{asyngauss3}), it remains to study the behaviour the fourth
order cumulants, observing that this statistics belong to the second order
Wiener chaos with respect to a Gaussian white noise random measure (see \cite%
{nourdinpeccati}, \cite{dlm2}). Let%
\begin{equation*}
B^{J_{L}}S_{J_{L}}\left( \alpha _{0}\right) =\frac{1}{B^{J_{L}}}%
\sum_{j}\left( A_{j}+B_{j}\right) \text{ ,}
\end{equation*}%
where%
\begin{eqnarray}
A_{j} &=&B^{2j}\log B^{j}\left\{ \frac{\sum_{k}\beta _{jk}^{2}}{%
N_{j}G_{0}K_{j}\left( \alpha _{0}\right) }-1\right\} \text{ ,}
\label{Aj_cum} \\
B_{j} &=&B^{2j}\log B^{j}\left\{ \frac{\widehat{G}_{J_{L}}(\alpha _{0})}{%
G_{0}}-1\right\} \text{ .}  \label{Bj_cum}
\end{eqnarray}%
In the Appendix, Lemma \ref{cumulants} proves that:%
\begin{equation*}
\frac{1}{B^{4J_{L}}}cum\left\{
\sum_{l_{1}}(A_{j_{1}}+B_{j_{1}}),\sum_{l_{2}}(A_{j_{2}}+B_{j_{2}}),%
\sum_{l_{3}}(A_{j_{3}}+B_{j_{3}}),\sum_{l_{4}}(A_{j_{4}}+B_{j_{4}})\right\}
\end{equation*}%
\begin{equation*}
=O_{J_{L}}\left( \frac{J_{L}^{4}\log ^{4}B}{B^{2J_{L}}}\right) \text{ .}
\end{equation*}

Exactly as in \cite{dlm} and \cite{dlm2}, the Central Limit Theorem follows
from results in \cite{nourdinpeccati}.
\end{proof}

\begin{lemma}
\label{Mconsistency2} Assume Condition \ref{REGULNEED0} holds with $%
0<4p+2-\alpha _{0}\leq Q$. Then, for $\overline{\alpha }\in \left[ \alpha
_{0}-\delta _{J_{L}},\alpha _{0}+\delta _{J_{L}}\right] $, we have 
\begin{equation*}
Q_{J_{0},J_{L}}^{M}\left( \overline{\alpha }\right) \rightarrow _{p}\frac{%
B^{2}\log ^{2}B}{\left( B^{2}-1\right) ^{2}}\text{ .}
\end{equation*}
\end{lemma}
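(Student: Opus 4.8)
The plan is to compute $Q_{J_{0},J_{L}}^{M}=\frac{d^{2}}{d\alpha ^{2}}R_{J_{0},J_{L}}^{M}$ explicitly, isolate its deterministic skeleton, and then invoke consistency of $\widehat{\alpha }_{J_{0},J_{L}}^{M}$. Since $R_{J_{0},J_{L}}^{M}(\alpha )=1+\log \widehat{G}_{J_{0},J_{L}}^{M}(\alpha )-(\sum_{j}N_{j})^{-1}\sum_{j}\sum_{k}\log (\beta _{jk;p}^{2}/K_{j}^{M}(\alpha ))$ and the terms $\sum_{k}\log \beta _{jk;p}^{2}$ carry no $\alpha $, differentiating twice gives
\begin{equation*}
Q_{J_{0},J_{L}}^{M}(\alpha )=\frac{d^{2}}{d\alpha ^{2}}\log \widehat{G}_{J_{0},J_{L}}^{M}(\alpha )+\frac{1}{\sum_{j=J_{0}}^{J_{L}}N_{j}}\sum_{j=J_{0}}^{J_{L}}N_{j}\,\frac{d^{2}}{d\alpha ^{2}}\log K_{j}^{M}(\alpha )\text{ .}
\end{equation*}
By Proposition \ref{propsumKj} (equivalently (\ref{UM1})--(\ref{UM2})), $K_{j}^{M}(\alpha )$ equals, up to an $\alpha $-free constant and a factor $1+o_{j}(1)$, the quantity $I_{p,0}(\alpha )B^{-\alpha j}$; hence $\frac{d^{2}}{d\alpha ^{2}}\log K_{j}^{M}(\alpha )=(U_{j;1}(\alpha ))^{2}-U_{j;2}(\alpha )$ has its $j^{2}\log ^{2}B$ and $j\log B$ parts cancel and converges, uniformly for $\alpha $ in a compact neighbourhood $V$ of $\alpha _{0}$, to $\frac{d^{2}}{d\alpha ^{2}}\log I_{p,0}(\alpha )$. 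Averaging a convergent sequence against the weights $N_{j}=c_{B}B^{2j}$, which concentrate at $j=J_{L}$, preserves the limit, so the second sum tends to $\frac{d^{2}}{d\alpha ^{2}}\log I_{p,0}(\alpha )$ uniformly on $V$.

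For the first term I would factor $\log \widehat{G}_{J_{0},J_{L}}^{M}(\alpha )=\log \widehat{G}_{J_{0},J_{L}}^{M}(\alpha _{0})+\log \big(\widehat{G}_{J_{0},J_{L}}^{M}(\alpha )/\widehat{G}_{J_{0},J_{L}}^{M}(\alpha _{0})\big)$, the first summand having vanishing second derivative. Using $1/K_{j}^{M}(\alpha )=(1/K_{j}^{M}(\alpha _{0}))\,B^{(\alpha -\alpha _{0})j}\frac{I_{p,0}(\alpha _{0})}{I_{p,0}(\alpha )}(1+o_{j}(1))$ from Proposition \ref{propsumKj} and writing $X_{j}:=\Lambda _{j;p}/(G_{0}N_{j}K_{j}^{M}(\alpha _{0}))$, one obtains
\begin{equation*}
\frac{d^{2}}{d\alpha ^{2}}\log \widehat{G}_{J_{0},J_{L}}^{M}(\alpha )=-\frac{d^{2}}{d\alpha ^{2}}\log I_{p,0}(\alpha )+\log ^{2}B\cdot \mathcal{V}(\alpha )\text{ ,}
\end{equation*}
where $\mathcal{V}(\alpha )$ is the (random) variance of the level index $j$ under the probability weights proportional to $B^{(2+\alpha -\alpha _{0})j}X_{j}(1+o_{j}(1))$. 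For $\alpha \in V$ the exponent $2+\alpha -\alpha _{0}$ stays positive, so these weights are geometric in $j$ and dominated at $j=J_{L}$; since $X_{j}>0$ is slowly varying in $j$ and concentrates at its mean with $Var(X_{j})=O(B^{-2j})$ (Lemma \ref{Gbehave}), its fluctuation perturbs the weights only at order $B^{-J_{L}}$, whence $\mathcal{V}(\alpha )\rightarrow _{p}\frac{B^{2+\alpha -\alpha _{0}}}{(B^{2+\alpha -\alpha _{0}}-1)^{2}}$ uniformly on $V$, by the geometric-sum bookkeeping already used in Lemmas \ref{lemmasums}, \ref{prop27} and \ref{Gbehave}.

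Adding the two contributions, the terms $\pm \frac{d^{2}}{d\alpha ^{2}}\log I_{p,0}(\alpha )$ cancel and $Q_{J_{0},J_{L}}^{M}(\alpha )\rightarrow _{p}\frac{B^{2+\alpha -\alpha _{0}}\log ^{2}B}{(B^{2+\alpha -\alpha _{0}}-1)^{2}}$ uniformly on $V$; this limit is continuous in $\alpha $ and equals $\frac{B^{2}\log ^{2}B}{(B^{2}-1)^{2}}$ at $\alpha =\alpha _{0}$. Since Theorem \ref{Mconsistency} gives $\overline{\alpha }\in [\alpha _{0}-\delta _{J_{L}},\alpha _{0}+\delta _{J_{L}}]$ with $\delta _{J_{L}}\rightarrow _{p}0$, uniform convergence on $V$ together with continuity of the limit at $\alpha _{0}$ yield $Q_{J_{0},J_{L}}^{M}(\overline{\alpha })\rightarrow _{p}\frac{B^{2}\log ^{2}B}{(B^{2}-1)^{2}}$, as claimed.

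The step I expect to be the main obstacle is the control of $\mathcal{V}(\alpha )$, i.e.\ of $\frac{d^{2}}{d\alpha ^{2}}\log \widehat{G}_{J_{0},J_{L}}^{M}(\alpha )$: a naive replacement of $\widehat{G}_{J_{0},J_{L};s}^{M}$ by its deterministic counterpart $G_{J_{0},J_{L};s}^{M}$ via Lemma \ref{unifconvergence} is insufficient, because $\widehat{G}_{J_{0},J_{L};2}^{M}/\widehat{G}_{J_{0},J_{L}}^{M}$ and $(\widehat{G}_{J_{0},J_{L};1}^{M}/\widehat{G}_{J_{0},J_{L}}^{M})^{2}$ are each of order $J_{L}^{2}$ while their difference is $O(1)$, so a multiplicative $(1+o_{p}(1))$ error would be $o_{p}(J_{L}^{2})$ and swamp the answer. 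One must instead work with the ratio $\widehat{G}_{J_{0},J_{L}}^{M}(\alpha )/\widehat{G}_{J_{0},J_{L}}^{M}(\alpha _{0})$, where the degree-$J_{L}^{2}$ part is confined to the manifestly bounded weighted variance $\mathcal{V}(\alpha )$ and the randomness enters only at geometric order $B^{-J_{L}}$, exactly as in \cite{dlm2}.
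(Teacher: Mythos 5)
Your proposal is correct and reaches the same limit, but it is organized rather differently from the paper's argument. The paper starts by replacing the stochastic quantities $\widehat{G}_{J_{0},J_{L};s}^{M}$ by their deterministic counterparts $G_{J_{0},J_{L};s}^{M}$ (with only a reference to Lemma \ref{unifconvergence} and to the analogous Lemma 19 of \cite{dlm2}), then splits the resulting deterministic expression into $Q_{1}+Q_{2}+Q_{3}$, kills $Q_{2}+Q_{3}$ by the cancellation of the $\log ^{2}B^{j}$ and $\log B^{j}$ terms, and evaluates $Q_{1}$ through the quantity $V_{J_{0};J_{L}}(s)$ of Corollary \ref{cor28}, obtaining $\log ^{2}B\,B^{s}/(B^{s}-1)^{2}$ with $s=2+\alpha -\alpha _{0}$ and concluding by consistency. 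You instead keep the hatted quantities throughout, observe that $\tfrac{d^{2}}{d\alpha ^{2}}\log \widehat{G}_{J_{0},J_{L}}^{M}(\alpha )$ is, up to the $-\tfrac{d^{2}}{d\alpha ^{2}}\log I_{p,0}(\alpha )$ term that cancels against the $K_{j}^{M}$-average, $\log ^{2}B$ times a weighted variance of the resolution index $j$ under weights proportional to $B^{(2+\alpha -\alpha _{0})j}X_{j}$, and control the randomness directly via $Var(X_{j})=O(B^{-2j})$ from Lemma \ref{mexicangavarini}; the deterministic variance computation you then perform is exactly the content of Corollary \ref{cor28}, so the analytic core coincides. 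What your route buys is precisely the point you flag at the end: since $\widehat{G}_{J_{0},J_{L};2}^{M}/\widehat{G}_{J_{0},J_{L}}^{M}$ and $(\widehat{G}_{J_{0},J_{L};1}^{M}/\widehat{G}_{J_{0},J_{L}}^{M})^{2}$ are each of order $J_{L}^{2}\log ^{2}B$ while their difference is $O(1)$, the qualitative $o_{p}(1)$ statement of Lemma \ref{unifconvergence} does not by itself license the paper's substitution, and one needs either your variance representation (where the fluctuations enter only at relative order $B^{-J_{L}}$, giving an $O_{p}(J_{L}^{2}B^{-J_{L}})=o_{p}(1)$ perturbation of $\mathcal{V}(\alpha )$) or a quantitative rate in Lemma \ref{unifconvergence}; your write-up makes this explicit, whereas the paper defers it to \cite{dlm2}. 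The only points to tidy are routine at the paper's level of rigor: the $o_{j}(1)$ factors in $K_{j}^{M}(\alpha )=I_{p,0}(\alpha )B^{-\alpha j}(1+o_{j}(1))$ depend on $\alpha $, so the differentiation should be done on the exact sums and the asymptotics of Proposition \ref{propsumKj} applied afterwards (as you in fact cite), and the uniformity of $\mathcal{V}(\alpha )\rightarrow _{p}B^{s}/(B^{s}-1)^{2}$ on the compact neighbourhood needs $s$ bounded away from the region where the geometric concentration at $j=J_{L}$ fails, which holds since $s$ stays near $2$.
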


\begin{proof}
The procedure is totally analogue to Lemma 19 in \cite{dlm2}. We obtain:

\begin{eqnarray*}
Q_{J_{0},J_{L}}^{M}\left( \alpha \right) &=&\frac{G_{J_{0},J_{L},2}^{M}%
\left( \alpha \right) G_{J_{0},J_{L}}^{M}\left( \alpha \right) -\left(
G_{J_{0},J_{L}1}^{M}\left( \alpha \right) \right) ^{2}}{\left(
G_{J_{0},J_{L}}^{M}\left( \alpha \right) \right) ^{2}} \\
&&+\frac{1}{\sum_{j}N_{j}}\sum_{j}N_{j}\frac{K_{j,2}^{M}\left( \alpha
\right) K_{j}^{M}\left( \alpha \right) -\left( K_{j,1}^{M}\left( \alpha
\right) \right) ^{2}}{\left( K_{j}^{M}\left( \alpha \right) \right) ^{2}}
\end{eqnarray*}%
\begin{equation*}
=\frac{\left( \sum_{j}N_{j}\frac{K_{j}^{M}\left( \alpha _{0}\right) }{%
K_{j}^{M}\left( \alpha \right) }\left( 2\left( \frac{K_{j,1}^{M}\left(
\alpha \right) }{K_{j}^{M}\left( \alpha \right) }\right) ^{2}-\frac{%
K_{j,2}^{M}\left( \alpha \right) }{K_{j}^{M}\left( \alpha \right) }\right)
\right) \left( \sum_{j}N_{j}\frac{K_{j}^{M}\left( \alpha _{0}\right) }{%
K_{j}^{M}\left( \alpha \right) }\right) }{\left( \sum_{j}N_{j}\frac{%
K_{j}^{M}\left( \alpha _{0}\right) }{K_{j}^{M}\left( \alpha \right) }\right)
^{2}}
\end{equation*}%
\begin{equation*}
-\frac{\left( \sum_{j}N_{j}\frac{K_{j}^{M}\left( \alpha _{0}\right) }{%
K_{j}^{M}\left( \alpha \right) }\left( -\frac{K_{j,1}^{M}\left( \alpha
\right) }{K_{j}^{M}\left( \alpha \right) }\right) \right) ^{2}}{\left(
\sum_{j}N_{j}\frac{K_{j}^{M}\left( \alpha _{0}\right) }{K_{j}^{M}\left(
\alpha \right) }\right) ^{2}}+\frac{1}{\sum_{j}N_{j}}\sum_{j}N_{j}\frac{%
K_{j,2}^{M}\left( \alpha \right) K_{j}^{M}\left( \alpha \right) -\left(
K_{j,1}^{M}\left( \alpha \right) \right) ^{2}}{\left( K_{j}^{M}\left( \alpha
\right) \right) ^{2}}\text{ .}
\end{equation*}%
$Q_{J_{0},J_{L}}^{M}\left( \alpha \right) $ can be rewritten as the sum of
three terms:%
\begin{equation*}
Q_{J_{0},J_{L}}^{M}\left( \alpha \right) =Q_{1}\left( \alpha \right)
+Q_{2}\left( \alpha \right) +Q_{3}\left( \alpha \right) \text{ ,}
\end{equation*}%
where:%
\begin{equation*}
Q_{1}\left( \alpha \right) =\frac{Q_{1}^{num}\left( \alpha \right) }{%
Q_{1}^{den}\left( \alpha \right) }
\end{equation*}%
\begin{equation*}
=\frac{\left( \sum_{j}N_{j}\frac{K_{j}^{M}\left( \alpha _{0}\right) }{%
K_{j}^{M}\left( \alpha \right) }\left( \frac{K_{j,1}^{M}\left( \alpha
\right) }{K_{j}^{M}\left( \alpha \right) }\right) ^{2}\right) \left(
\sum_{j}N_{j}\frac{K_{j}^{M}\left( \alpha _{0}\right) }{K_{j}^{M}\left(
\alpha \right) }\right) -\left( \sum_{j}N_{j}\frac{K_{j}^{M}\left( \alpha
_{0}\right) }{K_{j}^{M}\left( \alpha \right) }\left( -\frac{%
K_{j,1}^{M}\left( \alpha \right) }{K_{j}^{M}\left( \alpha \right) }\right)
\right) ^{2}}{\left( \sum_{j}N_{j}\frac{K_{j}^{M}\left( \alpha _{0}\right) }{%
K_{j}^{M}\left( \alpha \right) }\right) ^{2}}\text{ ,}
\end{equation*}%
\begin{equation*}
Q_{2}\left( \alpha \right) =\frac{Q_{2}^{num}\left( \alpha \right) }{%
Q_{2}^{den}\left( \alpha \right) }
\end{equation*}%
\begin{equation*}
=\frac{\left( \sum_{j}N_{j}\right) \left( \sum_{j}N_{j}\frac{K_{j}^{M}\left(
\alpha _{0}\right) }{K_{j}^{M}\left( \alpha \right) }\left( \frac{%
K_{j,1}^{M}\left( \alpha \right) }{K_{j}^{M}\left( \alpha \right) }\right)
^{2}\right) -\left( \sum_{j}N_{j}\frac{K_{j}^{M}\left( \alpha _{0}\right) }{%
K_{j}^{M}\left( \alpha \right) }\right) \left( \sum_{j}N_{j}\left( \frac{%
K_{j,1}^{M}\left( \alpha \right) }{K_{j}^{M}\left( \alpha \right) }\right)
^{2}\right) }{\left( \sum_{j}N_{j}\frac{K_{j}^{M}\left( \alpha _{0}\right) }{%
K_{j}^{M}\left( \alpha \right) }\right) \sum_{j}N_{j}}\text{ ,}
\end{equation*}%
\begin{equation*}
Q_{3}\left( \alpha \right) =\frac{Q_{3}^{num}\left( \alpha \right) }{%
Q_{2}^{den}\left( \alpha \right) }
\end{equation*}%
\begin{equation*}
=\frac{\left( \sum_{j}N_{j}\frac{K_{j}^{M}\left( \alpha _{0}\right) }{%
K_{j}^{M}\left( \alpha \right) }\right) \left( \sum_{j}N_{j}\frac{%
K_{j,2}^{M}\left( \alpha \right) }{K_{j}^{M}\left( \alpha \right) }\right)
-\left( \sum_{j}N_{j}\right) \left( \sum_{j}N_{j}\frac{K_{j}^{M}\left(
\alpha _{0}\right) }{K_{j}^{M}\left( \alpha \right) }\frac{K_{j,2}^{M}\left(
\alpha \right) }{K_{j}^{M}\left( \alpha \right) }\right) }{\left(
\sum_{j}N_{j}\frac{K_{j}^{M}\left( \alpha _{0}\right) }{K_{j}^{M}\left(
\alpha \right) }\right) \sum_{j}N_{j}}\text{ .}
\end{equation*}

The next step consists in showing that: 
\begin{equation*}
Q_{2}\left( \alpha \right) +Q_{3}\left( \alpha \right) =o_{J_{L}}(1)\text{ .}
\end{equation*}

Using Corollary \ref{corkjratio}, $Q_{2}^{num}\left( \alpha \right) $ can be
written as:%
\begin{equation*}
Q_{2}^{num}\left( \alpha \right)
\end{equation*}%
\begin{eqnarray*}
&=&\left( \sum_{j}N_{j}\right) \left( \sum_{j}N_{j}\frac{K_{j}^{M}\left(
\alpha _{0}\right) }{K_{j}^{M}\left( \alpha \right) }\left( \log ^{2}B^{j}+2%
\frac{I_{p,1}\left( B\right) }{I_{p,0}\left( B\right) }\log B^{j}+\left( 
\frac{I_{p,1}\left( B\right) }{I_{p,0}\left( B\right) }\right)
^{2}+o_{J_{L}}(1)\right) \right) \\
&&-\left( \sum_{j}N_{j}\frac{K_{j}^{M}\left( \alpha _{0}\right) }{%
K_{j}^{M}\left( \alpha \right) }\right) \left( \sum_{j}N_{j}\left( \log
^{2}B^{j}+2\frac{I_{p,1}\left( B\right) }{I_{p,0}\left( B\right) }\log
B^{j}+\left( \frac{I_{p,1}\left( B\right) }{I_{p,0}\left( B\right) }\right)
^{2}+o_{J_{L}}(1)\right) \right) \text{ ,}
\end{eqnarray*}

while $Q_{3}^{num}\left( \alpha \right) \,$becomes:%
\begin{equation*}
Q_{3}^{num}\left( \alpha \right)
\end{equation*}%
\begin{eqnarray*}
&=&\left( \sum_{j}N_{j}\frac{K_{j}^{M}\left( \alpha _{0}\right) }{%
K_{j}^{M}\left( \alpha \right) }\right) \left( \sum_{j}N_{j}\left( \log
B^{2j}+2\frac{I_{p,1}\left( B\right) }{I_{p,0}\left( B\right) }\log B^{j}+%
\frac{I_{p,2}\left( B\right) }{I_{p,0}\left( B\right) }+o_{J_{L}}(1)\right)
\right) \\
&&-\left( \sum_{j}N_{j}\right) \left( \sum_{j}N_{j}\frac{K_{j}\left( \alpha
_{0}\right) }{K_{j}\left( \alpha \right) }\left( \log B^{2j}+2\frac{%
I_{p,1}\left( B\right) }{I_{p,0}\left( B\right) }\log B^{j}+\frac{%
I_{p,2}\left( B\right) }{I_{p,0}\left( B\right) }+o_{J_{L}}(1)\right)
\right) \text{ ,}
\end{eqnarray*}%
so that:%
\begin{equation*}
\frac{Q_{2}^{num}\left( \alpha \right) +Q_{3}^{num}\left( \alpha \right) }{%
Q_{2}^{den}\left( \alpha \right) }=o_{J_{L}}(1)\text{ .}
\end{equation*}

It remains to study $Q_{2}^{den}\left( \alpha \right) ;$ by Propositions \ref%
{propsumKj} and \ref{prop27}, we have:%
\begin{equation*}
\lim_{J_{L}\rightarrow \infty }\frac{1}{B^{2\left( 2+\frac{\alpha -a_{0}}{2}%
\right) J_{L}}}\left( \sum_{j}N_{j}\frac{K_{j}^{M}\left( \alpha _{0}\right) 
}{K_{j}^{M}\left( \alpha \right) }\right) \left( \sum_{j}N_{j}\right)
\end{equation*}%
\begin{eqnarray*}
&=&\lim_{J_{L}\rightarrow \infty }\frac{c_{B}^{2}I_{p}\left( B,\alpha
-\alpha _{0}\right) }{B^{2\left( 2+\frac{\alpha -a_{0}}{2}\right) J_{L}}}%
\left( \sum_{j}B^{2j\left( 1+\frac{\alpha -\alpha _{0}}{2}\right) }\right)
\left( \sum_{j}B^{2j}\right) \\
&=&c_{B}^{2}I_{p}\left( B,\alpha -\alpha _{0}\right) \frac{B^{2\left( 1+%
\frac{\alpha -a_{0}}{2}\right) }}{B^{2\left( 1+\frac{\alpha -a_{0}}{2}%
\right) }-1}\frac{B^{2}}{B^{2}-1}>0\text{ .}
\end{eqnarray*}

Finally, we prove that $Q_{1}\left( \overline{\alpha }_{L}\right)
\rightarrow _{p}\frac{B^{2}\log ^{2}B}{\left( B^{2}-1\right) ^{2}}$. Using
again Proposition \ref{propsumKj} and Corollary \ref{corkjratio}, we write
the numerator $Q_{1}^{num}\left( \alpha \right) $ as:%
\begin{equation*}
Q_{1}^{num}\left( \alpha \right)
\end{equation*}%
\begin{eqnarray*}
&=&\left( \sum_{j}\frac{K_{j}^{M}\left( \alpha _{0}\right) }{K_{j}^{M}\left(
\alpha \right) }N_{j}\right) \left( \sum_{j}N_{j}\frac{K_{j}^{M}\left(
\alpha _{0}\right) }{K_{j}^{M}\left( \alpha \right) }\left( \log B^{j}+\frac{%
I_{p,1}\left( B\right) }{I_{p,0}\left( B\right) }\right) ^{2}\right) \\
&&-\left( \sum_{j}N_{j}\frac{K_{j}^{M}\left( \alpha _{0}\right) }{%
K_{j}^{M}\left( \alpha \right) }\left( \log B^{j}+\frac{I_{p,1}\left(
B\right) }{I_{p,0}\left( B\right) }\right) \right) ^{2}
\end{eqnarray*}%
\begin{equation*}
=\left( \sum_{j}\frac{K_{j}^{M}\left( \alpha _{0}\right) }{K_{j}^{M}\left(
\alpha \right) }N_{j}\left( \sum_{j}N_{j}\frac{K_{j}^{M}\left( \alpha
_{0}\right) }{K_{j}^{M}\left( \alpha \right) }\log ^{2}B^{j}\right) \right)
-\left( \sum_{j}N_{j}\frac{K_{j}^{M}\left( \alpha _{0}\right) }{%
K_{j}^{M}\left( \alpha \right) }\log B^{j}\right) ^{2}
\end{equation*}

Let $s=2\left( 1+\frac{\alpha -a_{0}}{2}\right) $; by applying Corollary \ref%
{cor28} we have:%
\begin{eqnarray*}
\lim_{J_{L}\rightarrow \infty }\frac{1}{B^{2sJ_{L}}}Q_{1}^{num}\left( \alpha
\right) &=&\lim_{J_{L}\rightarrow \infty }\frac{c_{B}^{2}I_{p}\left(
B,\alpha -\alpha _{0}\right) }{B^{2sJ_{L}}}Z_{J_{L}}\left( s\right) \\
&=&\log ^{2}B\frac{B^{3s}}{(B^{s}-1)^{4}}c_{B}^{2}I\left( B,\alpha
_{0},\alpha \right) \text{ . }
\end{eqnarray*}

It remains to study $Q_{1}^{den}\left( \alpha \right) ;$ by using again (\ref%
{MKalpha}) and Proposition \ref{prop27}:%
\begin{eqnarray*}
\lim_{J_{L}\rightarrow \infty }\frac{1}{B^{2sJ_{L}}}Q_{1}^{den}\left( \alpha
\right) &=&\lim_{J_{L}\rightarrow \infty }\frac{c_{B}^{2}I_{p}\left(
B,\alpha -\alpha _{0}\right) }{B^{2sJ_{L}}}\left( \sum_{j}B^{sj}\right) ^{2}
\\
&=&c_{B}^{2}I_{p}\left( B,\alpha -\alpha _{0}\right) \left( \frac{B^{s}}{%
B^{s}-1}\right) ^{2}\text{ .}
\end{eqnarray*}%
Hence%
\begin{equation*}
\lim_{J_{L}\rightarrow \infty }Q_{J\,_{0},J_{L}}^{M}\left( \alpha \right) =%
\frac{B^{2\left( 1+\frac{\alpha -a_{0}}{2}\right) }\log ^{2}B}{\left(
B^{2\left( 1+\frac{\alpha -a_{0}}{2}\right) }-1\right) ^{2}}\text{ .}
\end{equation*}%
For the consistency of $\widehat{\alpha }_{L}$, for $\overline{\alpha }%
_{L}\in \left[ \alpha _{0}-\widehat{\alpha }_{L},\alpha _{0}+\widehat{\alpha 
}_{L}\right] $, we have 
\begin{equation*}
Q_{J\,_{0},J_{L}}^{M}\left( \overline{\alpha }_{L}\right) \longrightarrow
_{p}\frac{B^{2}\log ^{2}B}{\left( B^{2}-1\right) ^{2}}\text{ .}
\end{equation*}
\end{proof}

\section{Narrow band estimates \label{narrow}}

From Theorem \ref{prop1}, it is evident that, under Condition \ref%
{REGULNEED2}, the presence of the bias term does not allow asymptotic
inference. As in \cite{dlm} and \cite{dlm2}, we suggest a narrow-band
strategy, developed only on the higher tail of the power spectrum, which
allows us to avoid the problem due to the nuisance parameter. We start from
the following

\begin{definition}
The \emph{Narrow-Band Mexican Needlet Whittle estimator }for the parameters $%
\vartheta =(\alpha ,G)$ is provided by%
\begin{equation*}
(\widehat{\alpha }_{J_{1};J_{L}}^{M},\widehat{G}_{J_{1};J_{L}}^{M}):=\arg
\min_{\alpha ,G}\sum_{j=J_{1}}^{J_{L}}\left[ \frac{\sum_{k}\beta _{jk;p}^{2}%
}{GK_{j}^{M}\left( \alpha \right) }-\sum_{k=1}^{N_{j}}\log \left( \frac{%
\beta _{jk;p}^{2}}{GK_{j}^{M}\left( \alpha \right) }\right) \right] \text{ ,}
\end{equation*}%
or equivalently:%
\begin{eqnarray}
\widehat{\alpha }_{J_{1},J_{L}}^{M} &=&\arg \min_{\alpha
}R_{J_{1};J_{L}}^{M}(\alpha ,\widehat{G}_{J_{1};J_{L}}^{M}(\alpha )),
\label{narrowest} \\
R_{J_{1};J_{L}}^{M}(\alpha ) &=&\left( \log \widehat{G}_{J_{1};J_{L}}^{M}(%
\alpha )+\frac{1}{\sum_{j=J_{0}}^{J_{L}}N_{j}}%
\sum_{J_{1}=J_{1}}^{J_{L}}N_{j}\log K_{j}^{M}\left( \alpha \right) \right) 
\text{ ,}  \notag
\end{eqnarray}%
where $0<J_{1}<J_{L}$ is chosen such that $B^{J_{L}}-B^{J_{1}}\rightarrow
\infty $ and%
\begin{equation}
B^{J_{1}}=B^{J_{L}}\left( 1-g\left( J_{L}\right) \right) \text{ , }%
J_{1}=J_{L}+\frac{\log \left( 1-g\left( J_{L}\right) \right) }{\log B}\text{
.}  \label{BJ1}
\end{equation}%
We choose $0<g\left( J_{L}\right) <1$ s.t. $\lim_{J_{L}\rightarrow \infty
}g\left( J_{L}\right) =0$ and $\lim_{J_{L}\rightarrow \infty }J_{L}g\left(
J_{L}\right) ^{\frac{3}{2}}=0$ .
\end{definition}

For notational simplicity $B^{J_{1}}$ is defined as an integer (if this
isn't the case, modified arguments taking integer parts are completely
trivial). For definiteness, we can take for instance $g\left( J_{L}\right)
=J_{L}^{-3}$ .

\begin{theorem}
\label{theonarrowband}Let $\widehat{\alpha }_{J_{L};J_{1}}$ defined as in (%
\ref{narrowest}). Then under Condition \ref{REGULNEED2} we have
\end{theorem}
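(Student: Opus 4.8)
The plan is to mirror the proof of Theorem~\ref{prop1}, the only --- but essential --- novelty being that the narrow band $[J_1,J_L]$, with $B^{J_1}=B^{J_L}(1-g(J_L))$ and $g(J_L)\rightarrow 0$, is calibrated so that the nuisance term $\kappa l^{-1}$ permitted by Condition~\ref{REGULNEED2} does not survive in the limit, thereby removing the asymptotic bias found in (\ref{asyngauss2}). First I would prove weak consistency of $(\widehat{\alpha}_{J_1;J_L}^M,\widehat{G}_{J_1;J_L}^M)$ by repeating the argument of Theorem~\ref{Mconsistency}: write $\Delta R_{J_1,J_L}^M(\alpha,\alpha_0)=U_{J_1,J_L}^M(\alpha)-T_{J_1,J_L}^M(\alpha)$ with $U,T$ as in (\ref{U_def})--(\ref{T_def}) but with all sums restricted to $j\in[J_1,J_L]$, and check that the narrow-band analogues of Lemmas~\ref{lemmaU}, \ref{lemmaT} and \ref{unifconvergence} still hold; since $N_j=c_BB^{2j}$ and by (\ref{MKalpha}) the relevant ratios of partial sums over $[J_1,J_L]$ are again governed by their top index $J_L$, while the identifiability function $l(x)$ of Lemma~\ref{lemmaU} keeps its unique minimum at $x=0$.

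Next I would Taylor-expand $R_{J_1,J_L}^M$ about $\widehat{\alpha}_{J_1;J_L}^M$ exactly as in the proof of Theorem~\ref{prop1}, so that $(\widehat{\alpha}_{J_1;J_L}^M-\alpha_0)=-S_{J_1,J_L}^M(\alpha_0)\,\bigl(Q_{J_1,J_L}^M(\overline{\alpha})\bigr)^{-1}$ with $\overline{\alpha}$ between $\widehat{\alpha}_{J_1;J_L}^M$ and $\alpha_0$. Following the argument of Lemma~\ref{Mconsistency2}, now with all sums over the narrow band, I would show $Q_{J_1,J_L}^M(\overline{\alpha})\rightarrow_p q_B>0$: the pieces $Q_2,Q_3$ are $o_{J_L}(1)$ by Corollary~\ref{corkjratio}, while $Q_1$ is handled via Corollary~\ref{cor28} and Proposition~\ref{prop27}, the partial sums over $[J_1,J_L]$ being proportional to those over $[J_0,J_L]$ up to a factor depending only on $B$ and $g(J_L)$.

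The core of the argument is the analysis of $S_{J_1,J_L}^M(\alpha_0)=\tfrac{G_0}{\widehat{G}_{J_1;J_L}^M(\alpha_0)}\,\overline{S}_{J_1,J_L}^M(\alpha_0)$, where
\begin{equation*}
\overline{S}_{J_1,J_L}^M(\alpha_0)=\frac{1}{\sum_{j=J_1}^{J_L}N_j}\sum_{j=J_1}^{J_L}\frac{K_{j,1}^M(\alpha_0)}{K_j^M(\alpha_0)}\sum_{k}\left(\frac{\beta_{jk;p}^2}{G_0K_j^M(\alpha_0)}-\frac{\widehat{G}_{J_1;J_L}^M(\alpha_0)}{G_0}\right).
\end{equation*}
Using Condition~\ref{REGULNEED2} together with (\ref{UM1}), I would show that $\mathbb{E}\bigl(\overline{S}_{J_1,J_L}^M(\alpha_0)\bigr)$ equals $\kappa$ times a factor of order $B^{-J_L}$ multiplied by a weighted increment of $\log B^{j}$ over $[J_1,J_L]$, hence is $O\bigl(B^{-J_L}g(J_L)\bigr)$ by (\ref{BJ1}); after normalisation this vanishes, which is precisely why the bias in (\ref{asyngauss2}) disappears here. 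For the variance I would reproduce the $A+B+C$ decomposition of Lemma~\ref{Mconsistency1} with all sums over $[J_1,J_L]$ and, applying Lemmas~\ref{Lemmasumtau}, \ref{prop27} and \ref{lemmasums}, obtain after the same cancellations a leading term $\sigma_0^2(1+\widetilde{\tau})\,h_B(g(J_L))\,B^{-2J_L}$ for an explicit band factor $h_B(\cdot)$, which fixes the normalising rate and the limiting variance in the statement. Finally, since the normalised statistic lies in the second Wiener chaos generated by the Gaussian white-noise measure underlying the $a_{lm}$'s, asymptotic normality follows from \cite{nourdinpeccati} once its fourth cumulant is shown to be negligible relative to the square of its variance; Lemma~\ref{cumulants} with sums over $[J_1,J_L]$ yields a fourth cumulant of the advertised order up to the band factor, and this is where the second calibration $\lim_{J_L\rightarrow\infty}J_Lg(J_L)^{3/2}=0$ is needed. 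Slutsky's Lemma together with the narrow-band analogue of Lemma~\ref{Gbehave} (namely $\widehat{G}_{J_1;J_L}^M(\alpha_0)\rightarrow_p G_0$) then close the proof.

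The main obstacle I expect is the variance asymptotics for $\overline{S}_{J_1,J_L}^M(\alpha_0)$: the cancellation of the $J_L^2$ and $J_L$ powers that produced the clean constant $\sigma_0^2$ in Lemma~\ref{Mconsistency1} must be redone because the band width $g(J_L)$ now enters the geometric sums, and one must verify that no term of order comparable to, or larger than, the surviving $B^{-2J_L}h_B(g(J_L))$ slips through; in parallel one has to confirm that the two conditions imposed on $g$ render, respectively, the bias and the fourth cumulant negligible at precisely that rate.
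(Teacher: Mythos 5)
Your overall architecture (Taylor expansion $\widehat{\alpha}_{J_1;J_L}^M-\alpha_0=-\overline{S}_{J_1;J_L}^M(\alpha_0)/Q_{J_1;J_L}^M(\overline{\alpha})$, bias and variance of $\overline{S}$ over the band, Nourdin--Peccati for the CLT) is the same as the paper's, but there is a genuine gap in your treatment of the denominator: you claim $Q_{J_1,J_L}^M(\overline{\alpha})\rightarrow_p q_B>0$. This is false in the narrow-band setting, and the error is not cosmetic. The same cancellation you correctly anticipate for the variance, namely that the combination $\Sigma_0\Sigma_2-\Sigma_1^2$ loses its leading order when the band shrinks, also occurs in the numerator of $Q_1$: with $B^{J_1}=B^{J_L}\left(1-g(J_L)\right)$ one gets (this is the computation (\ref{Zconti}) in the paper) $Z_{J_1;J_L}(s)=\Phi(B,s)\,B^{2sJ_L}g(J_L)+O\left(B^{2sJ_L}g(J_L)^2\right)$, while the denominator of $Q_1$ is of exact order $B^{2sJ_L}$; hence $Q_{J_1;J_L}^M(\overline{\alpha})\rightarrow_p\Phi(B)\,g(J_L)$, i.e.\ the curvature of the contrast degenerates at rate $g(J_L)$. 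This degeneracy is precisely the mechanism behind the slower rate in the statement: $Var\left(\overline{S}_{J_1;J_L}^M(\alpha_0)\right)\sim\sigma_0^2(1+\widetilde{\tau})\Phi(B)g(J_L)B^{-2J_L}$ divided by $Q^2\sim\Phi(B)^2g(J_L)^2$ gives a variance of order $B^{-2J_L}/\left(\Phi(B)g(J_L)\right)$ for $\widehat{\alpha}_{J_1;J_L}^M-\alpha_0$, whence the normalization $g(J_L)^{1/2}B^{J_L}$ and the limit variance $\sigma_0^2(1+\widetilde{\tau})/\Phi(B)$. If instead $Q$ tended to a constant, your own band-factor variance for $\overline{S}$ would force the normalization $g(J_L)^{-1/2}B^{J_L}$ (or a vanishing limiting variance), contradicting the very statement you are proving. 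So the $Z$-cancellation must be redone for $Q_1^{num}$ over $[J_1,J_L]$ exactly as for the variance; it cannot be waved away by saying the partial sums over $[J_1,J_L]$ are proportional to those over $[J_0,J_L]$ up to a factor in $B$ and $g(J_L)$ --- each individual sum is, but their quadratic combination is not.

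Two smaller points. First, the bias: $\mathbb{E}\left(\overline{S}_{J_1;J_L}^M(\alpha_0)\right)$ carries an extra factor $J_L$ (from $\log B^{j}\approx J_L\log B$ over the band), so it is of order $J_Lg(J_L)B^{-J_L}$ rather than your $O\left(g(J_L)B^{-J_L}\right)$; since one then divides by $Q\asymp g(J_L)$ and multiplies by $g(J_L)^{1/2}B^{J_L}$, the surviving bias does not vanish for free --- in the paper it is bounded by $O\left(J_L\,g(J_L)^{3/2}\right)$ and it is exactly here that the assumption $\lim_{J_L\rightarrow\infty}J_Lg(J_L)^{3/2}=0$ is invoked, not in the fourth-cumulant bound (which goes through as in Lemma \ref{cumulants} with sums restricted to the band). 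Second, for the variance you should state explicitly that the band factor is $\Phi(B)g(J_L)$, i.e.\ identify your $h_B(\cdot)$, since the limiting variance in the theorem is determined by the ratio of this factor to the square of the degenerate curvature $\Phi(B)g(J_L)$.
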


\begin{equation*}
g\left( J_{L}\right) ^{\frac{1}{2}}B^{J_{L}}\left( \widehat{\alpha }%
_{J_{1};J_{L}}-\alpha _{0}\right) \overset{d}{\longrightarrow }\mathcal{N}%
\left( 0,\frac{\sigma _{0}^{2}(1+\widetilde{\tau })}{\Phi \left( B\right) }%
\right) \text{ ,}
\end{equation*}%
where%
\begin{equation*}
\Phi \left( B\right) :=\log ^{2}B\frac{B^{2}}{\left( B^{2}-1\right) ^{2}}%
\left( \frac{4}{\left( B^{2}-1\right) }+2\left( \frac{\log B-1}{\log B}%
\right) \right) \text{ .}
\end{equation*}

\begin{proof}
The proof is very similar to the full band case, hence we provide here just
the main differences. Consider:%
\begin{eqnarray*}
S_{J_{1};J_{L}}\left( \alpha \right) &=&\frac{d}{d\alpha }%
R_{J_{1};J_{L}}^{M}\left( \alpha \right) \text{ ;} \\
Q_{J_{1};J_{L}}^{M}\left( \alpha \right) &=&\frac{d^{2}}{d\alpha ^{2}}%
R_{J_{1};J_{L}}^{M}\left( \alpha \right) \text{ .}
\end{eqnarray*}%
Let 
\begin{equation*}
\overline{S}_{J_{1};J_{L}}^{M}\left( \alpha _{0}\right) =\frac{-1}{%
\sum_{j=J_{1}}^{J_{L}}N_{j}}\sum_{j=J_{1}}^{J_{L}}\frac{K_{j,1}^{M}\left(
\alpha _{0}\right) }{K_{j}^{M}\left( \alpha _{0}\right) }\sum_{k=1}^{N_{j}}%
\left( \frac{\beta _{jk;p}^{2}}{G\left( \alpha _{0}\right) K_{j}^{M}\left(
\alpha _{0}\right) }-\frac{\widehat{G}_{J_{1};J_{L}}^{M}\left( \alpha
_{0}\right) }{G_{0}}\right) \text{ }.
\end{equation*}%
Simple calculations based on Proposition \ref{prop27} lead to%
\begin{eqnarray*}
\left( \sum_{j=J_{1}}^{J_{L}}B^{2j}\right) ^{n} &=&\frac{B^{2n}}{\left(
B^{2}-1\right) ^{n}}\left( B^{2J_{L}}-B^{2\left( J_{1}-1\right) }\right)
^{n}+o\left( B^{2nJ_{L}}\right) \\
&=&B^{nJ_{L}}\text{ }+O\left( B^{2nJ_{L}}g\left( J_{L}\right) \right) \text{
,}
\end{eqnarray*}%
We have:%
\begin{equation*}
\lim_{J_{L}\rightarrow \infty }\frac{B^{J_{L}}}{J_{L}g\left( J_{L}\right) }%
\mathbb{E}\left( \overline{S}_{J_{0},J_{L}}^{M}\left( \alpha _{0}\right)
\right)
\end{equation*}%
\begin{eqnarray*}
&=&\lim_{J_{L}\rightarrow \infty }\frac{B^{J_{L}}}{J_{L}g\left( J_{L}\right) 
}\kappa \frac{I_{p,0}\left( \alpha _{0}+1\right) }{I_{p,0}\left( \alpha
_{0}\right) } \\
&&\times \left( \sum_{j=J_{1}}^{J_{L}}\log B^{j}\cdot B^{2j}\left(
B^{-j}-B^{-J_{L}}\left( \frac{B-1}{B}+\frac{g\left( J_{L}\right) }{B}\right)
\right) \right) +o_{J_{L}}\left( 1\right) \\
&=&\lim_{J_{L}\rightarrow \infty }-\kappa \frac{I_{p,0}\left( \alpha
_{0}+1\right) }{I_{p,0}\left( \alpha _{0}\right) }\frac{\log B}{B+1}%
+o_{J_{L}}\left( 1\right)
\end{eqnarray*}

As in full band case, we collect out all the covariance terms defined in
Lemma \ref{Lemmasumtau} and following Corollary \ref{cor28}, we have 
\begin{equation*}
Var\left( \overline{S}_{J_{L};J_{1}}\left( \alpha _{0}\right) \right) =\frac{%
\sigma _{0}^{2}\left( 1+\widetilde{\tau }\right) }{\left(
\sum_{j=J_{1}}^{J_{L}}B^{2j}\right) ^{3}}\left( Z_{J_{1};J_{L}}\left(
2\right) +o_{J_{L}}\left( 1\right) \right) \text{.}
\end{equation*}

After some manipulations, we have:%
\begin{equation*}
\frac{1}{B^{4J_{L}}}Z_{J_{1};J_{L}}\left( 2\right)
\end{equation*}%
\begin{eqnarray}
&=&\left( 1-\frac{\left( 1-g\left( J_{L}\right) \right) ^{2}}{B^{2}}\right)
^{2}-\frac{\left( B^{2}-1\right) ^{2}}{B^{4}}\left( 1-g\left( J_{L}\right)
\right) ^{2}\left( 1-\log _{B}\left( 1-g\left( J_{L}\right) \right) \right)
^{2}  \notag \\
&=&\left( \frac{B^{2}-1}{B^{2}}\right) ^{2}\Delta Z_{J_{L};J_{1}}\left(
g\left( J_{L}\right) \right) +O\left( g\left( J_{L}\right) ^{2}\right) 
\notag \\
&=&\left( \frac{B^{2}-1}{B^{2}}\right) ^{2}\left( \frac{4}{\left(
B^{2}-1\right) }+\left( 2-\frac{2}{\log B}\right) \right) g\left(
J_{L}\right) +O\left( B^{4J_{L}}g\left( J_{L}\right) ^{2}\right) \text{ ,}
\label{Zconti}
\end{eqnarray}

where%
\begin{eqnarray*}
\Delta Z_{J_{1};J_{L}}\left( g\left( J_{L}\right) \right) &=&\left( 1+\frac{%
4g\left( J_{L}\right) }{\left( B^{2}-1\right) }\right) -\left( 1-2g\left(
J_{L}\right) \right) \left( 1+\frac{1}{\log B}g\left( J_{L}\right) \right)
^{2} \\
&=&\left( \frac{4}{\left( B^{2}-1\right) }+\left( 2-\frac{2}{\log B}\right)
\right) g\left( J_{L}\right)
\end{eqnarray*}

Thus we have%
\begin{equation*}
Z_{J_{L};J_{1}}\left( 2\right) =B^{4J_{L}}\Phi \left( B\right) g\left(
J_{L}\right) \text{ }+O\left( B^{4J_{L}}g\left( J_{L}\right) ^{2}\right) 
\text{.}
\end{equation*}%
Note that $\Phi \left( B\right) >0$ for $B>1$.

Hence we have%
\begin{equation*}
Var\left( \overline{S}_{J_{1};J_{L}}^{M}\left( \alpha _{0}\right) \right)
=\sigma _{0}^{2}(1+\widetilde{\tau })\Phi \left( B\right) g\left(
J_{L}\right) B^{-2J_{L}}\text{ ,}
\end{equation*}%
Consider now $Q_{J_{L};J_{1}}\left( \alpha \right) $, which we rewrite as%
\begin{equation*}
Q_{J_{1};J_{L}}^{M}\left( \alpha \right) =\frac{Q_{num}\left( \alpha \right) 
}{Q_{den}\left( \alpha \right) }\text{ .}
\end{equation*}%
Following a procedure similar to Lemma \ref{Mconsistency2}, we have 
\begin{equation*}
Q_{num}\left( \alpha \right) =c_{B}^{2}G_{0}^{2}I_{p}\left( B,\alpha -\alpha
_{0}\right) Z_{J_{1};J_{L}}\left( s\right) \text{ ,}
\end{equation*}%
where $s=2\left( 1+\frac{\alpha -\alpha _{0}}{2}\right) $. Following (\ref%
{Zconti}), we obtain%
\begin{equation*}
Q_{num}\left( \alpha \right) =c_{B}^{2}G_{0}^{2}I\left( B,\alpha -\alpha
_{0}\right) \Phi \left( B,s\right) B^{2sJ_{L}}g\left( J_{L}\right) +O\left(
B^{2sJ_{L}}g\left( J_{L}\right) ^{2}\right) \text{ ,}
\end{equation*}%
where%
\begin{equation*}
\Phi \left( B,s\right) =\log ^{2}B\frac{B^{s}}{\left( B^{s}-1\right) ^{2}}%
\left( \frac{2sg\left( J_{L}\right) }{B^{s}-1}+\frac{s\log B-2}{\log B}%
\right) \text{.}
\end{equation*}

Finally, we obtain 
\begin{eqnarray*}
Q_{den}\left( \alpha \right) &=&c_{B}^{2}G_{0}^{2}I\left( B,\alpha -\alpha
_{0}\right) \left( \sum_{j=J_{1}}^{J_{L}}B^{sj}\right) ^{2} \\
&=&c_{B}^{2}G_{0}^{2}I_{p}\left( B,\alpha -\alpha _{0}\right)
B^{2sJ_{L}}+o\left( B^{2sJ_{L}}\right) \text{ .}
\end{eqnarray*}%
Hence 
\begin{equation*}
Q_{J_{1};J_{L}}^{M}\left( \alpha \right) =\Phi \left( B,s\right) g\left(
J_{L}\right) +O\left( B^{2sJ_{L}}g\left( J_{L}\right) ^{2}\right) \text{ ,}
\end{equation*}%
and, for the consistency of $\alpha ,$ we have 
\begin{equation*}
Q_{J_{1};J_{L}}^{M}\left( \overline{\alpha }\right) \rightarrow _{p}\Phi
\left( B\right) g\left( J_{L}\right) \text{ .}
\end{equation*}%
Thus%
\begin{equation*}
\left( \frac{\sigma _{0}^{2}(1+\widetilde{\tau })}{\Phi \left( B\right) }%
\right) ^{-\frac{1}{2}}g\left( J_{L}\right) ^{\frac{1}{2}}B^{J_{L}}\frac{%
\overline{S}_{J_{1};J_{L}}^{M}\left( \alpha _{0}\right) }{%
Q_{J_{1};J_{L}}^{M}\left( \overline{\alpha }\right) }\overset{d}{\rightarrow 
}\mathcal{N}\left( 0,1\right) \text{ ,}
\end{equation*}%
as claimed. Finally we can see that 
\begin{equation*}
g\left( J_{L}\right) ^{\frac{1}{2}}B^{J_{L}}\mathbb{E}\frac{\overline{S}%
_{J_{1};J_{L}}^{M}\left( \alpha _{0}\right) }{Q_{J_{1};J_{L}}^{M}\left( 
\overline{\alpha }\right) }=O\left( J_{L}\cdot g\left( J_{L}\right) ^{\frac{3%
}{2}}\right) \underset{J_{L\rightarrow \infty }}{\rightarrow }0\text{ .}
\end{equation*}
\end{proof}

\section{The Plug-in procedure \label{plugin}}

In this Section, we will present a plug-in estimation procedure for the
spectral parameter $\alpha _{0}$ based on the interaction of the approach
described  here and the one based upon standard needlets introduced in \cite%
{dlm2}. As already mentioned in the Introduction, there already exist in
literature Whittle-like estimators for spectral parameter based on spherical
harmonics and standard needlets. The former, although characterized by a
higher efficiency, can be affected by the presence of masked regions over
the sphere, common set-up in Cosmological investigations, because of the
lack of localization in the spatial domain. The latter, as one here
developed, is not altered by partially observed regions, paying the price of
a lower precision. Therefore, our aim is to show that, if $4p-\alpha _{0}>0\,
$, the spectral parameter estimator $\widehat{\alpha }_{J_{0,}J_{L}}^{M}$ is
more efficient with respect to the standard needlet estimator $\widehat{%
\alpha }_{J_{L}}$. First of all, observe that 
\begin{eqnarray*}
\lim_{J_{L}\rightarrow \infty }B^{2J_{L}}Var\left( \widehat{\alpha }%
_{J_{0,}J_{L}}^{M}-\alpha _{0}\right)  &=&\sigma _{0}^{2}\left( 1+\widetilde{%
\tau }\right) \frac{\left( B^{2}-1\right) ^{3}}{B^{4}\log ^{2}B}\text{ ,} \\
\lim_{J_{L}\rightarrow \infty }B^{2J_{L}}Var\left( \widehat{\alpha }%
_{J_{L}}-\alpha _{0}\right)  &=&\rho _{0}^{2}\frac{\left( B^{2}-1\right) ^{3}%
}{B^{4}\log ^{2}B}\,\ ,
\end{eqnarray*}%
see again \cite{dlm2}. We can therefore observe that for $4p-\alpha _{0}>0$, 
\begin{equation*}
\sigma _{1}^{2}<\rho _{0}^{2}\text{ ,}
\end{equation*}%
where $\sigma _{1}^{2}:=\sigma _{0}^{2}\left( 1+\widetilde{\tau }\right) $.
Consider that, for any fixed $p:4p>\alpha _{0}$, $\sigma _{0}^{2},$ which
does not depend on $B$, becomes, by the Stirling's formula, 
\begin{equation*}
\sigma _{0}^{2}\simeq \frac{2}{\sqrt{\left( \pi \left( 2p-\frac{\alpha _{0}}{%
2}\right) \right) }}\text{ .}
\end{equation*}%
We have that $\sigma _{0}^{2}$ is smaller than $1$ for $4p\gtrsim \alpha
_{0}-2$, while easy calculations show that $\widetilde{\tau }<1$. On the
other hand, as described in \cite{dlm2}, $\rho _{0}^{2}=\rho _{0}^{2}\left(
\alpha _{0},B\right) $ is decreasing on $B$ (see also Table 1): any attempt
to reduce its value will produce an increase of the variance due to the term 
$\left( B^{2}-1\right) ^{3}/B^{4}\log ^{2}B.$

\begin{center}
\begin{tabular}{l|c|c|c|c|c|c|}
\cline{2-7}
& \multicolumn{3}{|c|}{Standard Needlet-$\rho _{0}^{2}$} & 
\multicolumn{3}{|c|}{Mexican Needlets - $\sigma _{1}^{2}$} \\ \cline{2-7}
& $B=\sqrt[4]{2}$ & $B=\sqrt{2}$ & $B=2$ & $p=2$ & $p=3$ & $p=4$ \\ \hline
\multicolumn{1}{|l|}{$\alpha _{0}=2$} & 5.00 & 2.24 & 1.16 & 0.62 & 0.49 & 
0.42 \\ 
\multicolumn{1}{|l|}{$\alpha _{0}=3$} & 5.04 & 2.53 & 1.34 & 0.67 & 0.51 & 
0.43 \\ 
\multicolumn{1}{|l|}{$\alpha _{0}=4$} & 5.10 & 2.64 & 1.57 & 0.75 & 0.55 & 
0.45 \\ \hline
\end{tabular}

Table 1: Comparison among different values of the variances $\rho _{0}^{2}$
and $\sigma _{0}^{2}$.
\end{center}

Hence, the plug-in procedure can be implemented in two steps:

\begin{itemize}
\item First step: compute $\widehat{\alpha }_{J_{L}}$ in the standard
needlet framework.

\item Second step: if $p>\widehat{\alpha }_{J_{L}}/4$, compute $\widehat{%
\alpha }_{J_{0,}J_{L}}^{M}$ by the mexican needlet approach; otherwhise,
accept $\widehat{\alpha }_{J_{L}}$.
\end{itemize}

\appendix{}

\section{Auxiliary results: preliminaries}

The results collected in this section, provided by standard analytical
calculations, are here reported to explicit the structure and the behaviour
of the function $f_{p}\left( \cdot \right) $ defined in in (\ref{weightfunc}%
).

\begin{lemma}
\label{integrals}Let 
\begin{equation*}
W_{2a,b,s}=\int_{0}^{\infty }t^{2a}\exp \left( -bt^{2}\right) \log
^{s}\left( t\right) dt\text{ .}
\end{equation*}%
We have%
\begin{equation*}
W_{2a,b,0}=\frac{b^{-\left( a+\frac{1}{2}\right) }}{2}\Gamma \left( a+\frac{1%
}{2}\right) \text{ ,}
\end{equation*}%
\begin{equation*}
W_{2a,b,1}=\frac{b^{-\left( a+\frac{1}{2}\right) }}{4}\left[ \frac{d}{da}%
-\log b\right] \Gamma \left( a+\frac{1}{2}\right)
\end{equation*}%
and%
\begin{equation*}
W_{2a,b,2}=\frac{b^{-\left( a+\frac{1}{2}\right) }}{8}\left[ \frac{d^{2}}{%
da^{2}}-2\log b\frac{d}{da}+\log ^{2}b\right] \Gamma \left( a+\frac{1}{2}%
\right) \text{ .}
\end{equation*}
\end{lemma}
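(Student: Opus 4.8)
The plan is to reduce everything to a single Gamma-function evaluation together with differentiation under the integral sign in the parameter $a$. First I would compute $W_{2a,b,0}$ directly: the substitution $u=bt^{2}$ (so $t=(u/b)^{1/2}$ and $dt=\tfrac12 b^{-1/2}u^{-1/2}\,du$) turns the integral into $\tfrac12\, b^{-(a+1/2)}\int_{0}^{\infty}u^{a-1/2}e^{-u}\,du=\tfrac12\, b^{-(a+1/2)}\Gamma\!\left(a+\tfrac12\right)$, which is finite precisely when $a>-\tfrac12$ and $b>0$; this is the first displayed formula.

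Next, observe that $\partial_{a}t^{2a}=2t^{2a}\log t$ and $\partial_{a}^{2}t^{2a}=4t^{2a}\log^{2}t$, so that, at least formally,
\begin{equation*}
\partial_{a}W_{2a,b,0}=2\,W_{2a,b,1},\qquad \partial_{a}^{2}W_{2a,b,0}=4\,W_{2a,b,2}\text{ .}
\end{equation*}
To make this rigorous I would fix a compact subinterval $[a_{1},a_{2}]\subset(-\tfrac12,\infty)$ and bound, for $a$ in that interval, the integrand $t^{2a}e^{-bt^{2}}|\log t|^{s}$ (and likewise the $a$-derivatives, which only insert further powers of $\log t$) by a single integrable majorant on $(0,\infty)$: near $t=0$ one uses $t^{2a_{1}}|\log t|^{s}$, which is integrable since $2a_{1}>-1$, and near $t=+\infty$ the Gaussian factor dominates any polynomial times a power of $\log t$. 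Dominated convergence then licenses differentiating under the integral sign, so the two identities above hold and it remains only to differentiate the closed form of $W_{2a,b,0}$.

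Finally I would carry out that differentiation. Writing $W_{2a,b,0}=\tfrac12\,b^{-(a+1/2)}\Gamma\!\left(a+\tfrac12\right)$ and using $\partial_{a}b^{-(a+1/2)}=-(\log b)\,b^{-(a+1/2)}$ together with the Leibniz rule, the first derivative equals $\tfrac12\, b^{-(a+1/2)}\bigl[\tfrac{d}{da}-\log b\bigr]\Gamma\!\left(a+\tfrac12\right)$, and dividing by $2$ gives the stated expression for $W_{2a,b,1}$; differentiating once more produces the operator $\tfrac{d^{2}}{da^{2}}-2\log b\,\tfrac{d}{da}+\log^{2}b$ acting on $\Gamma\!\left(a+\tfrac12\right)$, and dividing by $4$ yields $W_{2a,b,2}$. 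There is no real obstacle here: the only point requiring any care is the interchange of differentiation and integration, which the majorant above settles; everything else is bookkeeping with the chain and product rules.
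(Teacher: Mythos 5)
Your proposal is correct, and the constants come out right: $\partial_{a}t^{2a}=2t^{2a}\log t$ gives $\partial_{a}W_{2a,b,0}=2W_{2a,b,1}$ and $\partial_{a}^{2}W_{2a,b,0}=4W_{2a,b,2}$, and differentiating $\tfrac12 b^{-(a+1/2)}\Gamma\left(a+\tfrac12\right)$ once and twice in $a$ indeed produces the operators $\left[\tfrac{d}{da}-\log b\right]$ and $\left[\tfrac{d^{2}}{da^{2}}-2\log b\,\tfrac{d}{da}+\log^{2}b\right]$ with the prefactors $1/4$ and $1/8$. Your route differs in organization from the paper's: you compute only the $s=0$ case by the substitution $u=bt^{2}$ and then obtain $s=1,2$ by differentiating both sides of that identity in the parameter $a$, justifying the interchange with an explicit integrable majorant (splitting at $t=1$, using $2a_{1}>-1$ near the origin and the Gaussian tail at infinity). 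The paper instead treats each $s$ directly: it writes $\log t=\tfrac12\left[\log(bt^{2})-\log b\right]$ (and the analogous expansion of $\log^{2}t$), changes variables $x=bt^{2}$ in each resulting term, and identifies $\int_{0}^{\infty}x^{a-1/2}e^{-x}\log^{s}x\,dx$ with $\tfrac{d^{s}}{da^{s}}\Gamma\left(a+\tfrac12\right)$. The two arguments rest on the same underlying fact — a logarithmic factor corresponds to an $a$-derivative — but yours needs only the single $s=0$ substitution and makes the differentiation-under-the-integral step rigorous where the paper leaves it implicit in the Gamma-derivative representation, while the paper's computation is more self-contained term by term and avoids invoking dominated convergence explicitly.
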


\begin{proof}
Standard calculations lead to 
\begin{eqnarray*}
W_{2a,b,0} &=&\int_{0}^{\infty }t^{2a}\exp \left( -bt^{2}\right) dt \\
&=&\frac{b^{-\left( a+\frac{1}{2}\right) }}{2}\int_{0}^{\infty }\left(
bt^{2}\right) ^{a-\frac{1}{2}}\exp \left( -bt^{2}\right) \left( 2btdt\right)
\\
&=&\frac{b^{-\left( a+\frac{1}{2}\right) }}{2}\Gamma \left( a+\frac{1}{2}%
\right) \text{ ;}
\end{eqnarray*}%
Similarly%
\begin{eqnarray*}
W_{2a,b,1} &=&\int_{0}^{\infty }t^{2a}\exp \left( -bt^{2}\right) \log tdt \\
&=&\frac{1}{2}\int_{0}^{\infty }t^{2a}\exp \left( -bt^{2}\right) \log \left(
bt^{2}\right) dt-\frac{\log b}{2}\int_{0}^{\infty }t^{2a}\exp \left(
-bt^{2}\right) dt \\
&=&\frac{b^{-\left( a+\frac{1}{2}\right) }}{4}\int_{0}^{\infty }x^{a-\frac{1%
}{2}}\exp \left( -x\right) \left[ \log x-\log b\right] dx \\
&=&\frac{b^{-\left( a+\frac{1}{2}\right) }}{4}\left[ \frac{d}{da}-\log b%
\right] \Gamma \left( a+\frac{1}{2}\right) \text{ ;}
\end{eqnarray*}%
\begin{eqnarray*}
W_{a,b,2} &=&\int_{0}^{\infty }t^{2a}\exp \left( -bt^{2}\right) \left( \log
t\right) ^{2}dt \\
&=&\frac{b^{-\left( a+\frac{1}{2}\right) }}{8}\int_{0}^{\infty }\left(
bt^{2}\right) ^{a-\frac{1}{2}}\exp \left( -bt^{2}\right) \left[ \left( \log
bt^{2}\right) ^{2}-2\log b\log bt^{2}+\log ^{2}b\right] 2btdt \\
&=&\frac{b^{-\left( a+\frac{1}{2}\right) }}{8}\left[ \frac{d^{2}}{da^{2}}%
-2\log b\frac{d}{da}+\log ^{2}b\right] \Gamma \left( a+\frac{1}{2}\right) 
\text{ .}
\end{eqnarray*}
\end{proof}

\begin{lemma}
\label{lemmasums}Let $f_{p}\left( \cdot \right) $ be defined as in (\ref%
{weightfunc}). Then we have 
\begin{equation}
\sum_{l\geq 1}f_{p}^{a}\left( \frac{l}{B^{j}}\right) l^{n}=\frac{B^{\left(
n+1\right) j}}{2a^{ap+\frac{n+1}{2}}}\Gamma \left( ap+\frac{n+1}{2}\right)
+o\left( B^{j\left( n+1\right) }\right) \text{ ;}  \label{sumvar}
\end{equation}%
Moreover, for $\Delta j\in \mathbb{Z}$, we have%
\begin{equation*}
\sum_{l\geq 1}f_{p}^{a_{1}}\left( \frac{l}{B^{j}}\right) f_{p}^{a_{2}}\left( 
\frac{l}{B^{j+\Delta j}}\right) l^{n}\text{ .}
\end{equation*}%
\begin{equation}
=\frac{B^{\left( n+1\right) j}\tau _{p,a_{1},a_{2}}\left( \Delta j\right) }{%
2\left( a_{1}+a_{2}\right) ^{\left( a_{1}+a_{2}\right) p+\frac{n+1}{2}}}%
\Gamma \left( \left( a_{1}+a_{2}\right) p+\frac{n+1}{2}\right) +o\left(
B^{\left( n+1\right) j}\right) \text{ ,}  \label{sumcov}
\end{equation}%
where%
\begin{equation*}
\tau _{p,a_{1},a_{2}}\left( \Delta j\right) =\left( \frac{a_{1}B^{\Delta
j}+a_{2}B^{-\Delta j}}{a_{1}+a_{2}}\right) ^{-\left( \left(
a_{1}+a_{2}\right) p+\frac{n+1}{2}\right) }B^{\Delta j\left( \left(
a_{1}-a_{2}\right) p+\frac{n+1}{2}\right) }\text{ .}
\end{equation*}
\end{lemma}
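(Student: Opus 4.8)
The plan is to treat both sums as Riemann sums with mesh $h_{j}:=B^{-j}\to 0$ and to identify the limiting integrals with the quantities $W_{2a,b,0}$ of Lemma \ref{integrals}. Write $f_{p}^{a}(x)=x^{2ap}e^{-ax^{2}}$, so that with $t=l/B^{j}$ one has
\[
\sum_{l\geq 1}f_{p}^{a}\!\left(\tfrac{l}{B^{j}}\right)l^{n}
=B^{j(n+1)}\,h_{j}\sum_{l\geq1}g_{a}(lh_{j}),\qquad g_{a}(t):=t^{\,n+2ap}e^{-at^{2}}.
\]
Since the argument $ap+\tfrac{n+1}{2}$ of the Gamma function on the right of (\ref{sumvar}) is positive, $g_{a}$ is integrable on $(0,\infty)$; moreover $g_{a}$ has a single interior maximum, is monotone on each side of it, and decays faster than any power at infinity. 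Hence $h_{j}\sum_{l\geq1}g_{a}(lh_{j})\to\int_{0}^{\infty}g_{a}(t)\,dt$, i.e. the sum equals $B^{j(n+1)}\int_{0}^{\infty}g_{a}(t)\,dt+o(B^{j(n+1)})$. Applying Lemma \ref{integrals} with $(a,b)$ replaced by $(ap+\tfrac{n}{2},\,a)$ gives $\int_{0}^{\infty}g_{a}=\tfrac12 a^{-(ap+\frac{n+1}{2})}\Gamma(ap+\tfrac{n+1}{2})$, which is (\ref{sumvar}).

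For (\ref{sumcov}) note that $l/B^{j+\Delta j}=tB^{-\Delta j}$, whence
\[
f_{p}^{a_{1}}\!\left(\tfrac{l}{B^{j}}\right)f_{p}^{a_{2}}\!\left(\tfrac{l}{B^{j+\Delta j}}\right)
=B^{-2a_{2}p\,\Delta j}\,t^{2(a_{1}+a_{2})p}\,e^{-c_{\Delta j}t^{2}},\qquad
c_{\Delta j}:=a_{1}+a_{2}B^{-2\Delta j}>0 .
\]
Repeating the Riemann-sum argument verbatim, with $(a_{1}+a_{2})p$ in place of $ap$ and $c_{\Delta j}$ in place of $a$, the sum equals $B^{-2a_{2}p\,\Delta j}\,B^{j(n+1)}\,W_{2(a_{1}+a_{2})p+n,\,c_{\Delta j},\,0}+o(B^{j(n+1)})$, and Lemma \ref{integrals} turns $W_{2(a_{1}+a_{2})p+n,c_{\Delta j},0}$ into $\tfrac12 c_{\Delta j}^{-((a_{1}+a_{2})p+\frac{n+1}{2})}\Gamma((a_{1}+a_{2})p+\tfrac{n+1}{2})$. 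It then remains a purely algebraic check that, writing $c_{\Delta j}=B^{-\Delta j}(a_{1}B^{\Delta j}+a_{2}B^{-\Delta j})$,
\[
B^{-2a_{2}p\,\Delta j}\,c_{\Delta j}^{-\left((a_{1}+a_{2})p+\frac{n+1}{2}\right)}
=(a_{1}+a_{2})^{-\left((a_{1}+a_{2})p+\frac{n+1}{2}\right)}\,\tau_{p,a_{1},a_{2}}(\Delta j),
\]
with $\tau_{p,a_{1},a_{2}}$ as defined in the statement; this yields (\ref{sumcov}).

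The one step that needs care is the Riemann-sum approximation, and the hard part will be the case $n<0$ — precisely the case $n=1-2\alpha_{0}$ used in Lemma \ref{mexicangavarini} — since then $g_{a}$ is unbounded at the origin, so one cannot invoke boundedness. I would handle this by splitting $(0,\infty)$ at the (unique) maximum of $g_{a}$ and using monotonicity on each piece to sandwich $h_{j}\sum_{l\geq1}g_{a}(lh_{j})$ between $\int_{0}^{\infty}g_{a}$ and $\int_{0}^{\infty}g_{a}$ plus a boundary correction; that correction is of order $h_{j}\,g_{a}(h_{j})=h_{j}^{\,1+n+2ap}\to0$, because $1+n+2ap=2(ap+\tfrac{n+1}{2})>0$, while the Gaussian factor makes the large-$l$ tail negligible. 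This gives the claimed $o(B^{j(n+1)})$ error, and the same estimate applied to $g_{a}$ with parameters $(a_{1}+a_{2},n,c_{\Delta j})$ in place of $(a,n,a)$ (uniformly for $\Delta j$ in any fixed finite range, which is all that is used) covers (\ref{sumcov}).
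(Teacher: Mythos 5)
Your proposal is correct and follows essentially the same route as the paper: both reduce the sums, via the substitution $t=l/B^{j}$ and a Riemann-sum approximation, to Gamma-type integrals of the form treated in Lemma \ref{integrals}, with the identification of $\tau _{p,a_{1},a_{2}}\left( \Delta j\right) $ being a purely algebraic rearrangement. The only differences are organisational: the paper establishes (\ref{sumcov}) first and obtains (\ref{sumvar}) by specializing $\Delta j=0$, $a_{1}=a_{2}=a/2$, whereas you prove (\ref{sumvar}) directly and, usefully, spell out the justification of the Riemann-sum step (the monotonicity sandwich and the control of the singularity at the origin when $n+2ap<0$) that the paper asserts without detail.
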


\begin{proof}
Observe that%
\begin{equation*}
\sum_{l\geq 1}f_{p}^{a_{1}}\left( \frac{l}{B^{j}}\right) f_{p}^{a_{2}}\left( 
\frac{l}{B^{j+\Delta j}}\right) l^{n}
\end{equation*}%
\begin{eqnarray*}
&=&\sum_{l\geq 1}\exp \left( -l^{2}\left( \frac{a_{1}}{B^{2j}}+\frac{a_{2}}{%
B^{2j+2\Delta j}}\right) \right) \left( \left( \frac{l}{B^{j}}\right)
^{2}\right) ^{a_{1}p}\left( \left( \frac{l}{B^{j+\Delta j}}\right)
^{2}\right) ^{a_{2}p}l^{n} \\
&=&\frac{B^{jn}}{B^{2a_{2}p\Delta j}}\sum_{l\geq 1}\exp \left( -\left( \frac{%
l}{B^{j}}\right) ^{2}\left( \frac{a_{1}B^{2\Delta j}+a_{2}}{B^{2\Delta j}}%
\right) \right) \left( \left( \frac{l}{B^{j}}\right) ^{2}\right) ^{\left(
a_{1}+a_{2}\right) p+\frac{n}{2}} \\
&=&\frac{B^{j\left( n+1\right) }}{2B^{2a_{2}p\Delta j}}\left( \frac{%
B^{2\Delta j}}{a_{1}B^{2\Delta j}+a_{2}}\right) ^{\left( a_{1}+a_{2}\right)
p+\frac{n+1}{2}}\int_{0}^{+\infty }x^{\left[ \left( a_{1}+a_{2}\right) p+%
\frac{n-1}{2}\right] }\exp \left( -x\right) dx+o\left( B^{\left( n+1\right)
j}\right) \\
&=&\frac{B^{j\left( n+1\right) }}{2}\frac{B^{2\Delta j\left( a_{1}p+\frac{n+1%
}{2}\right) }}{\left( a_{1}B^{2\Delta j}+a_{2}\right) ^{\left(
a_{1}+a_{2}\right) p+\frac{n+1}{2}}}\Gamma \left( \left( a_{1}+a_{2}\right)
p+\frac{n+1}{2}\right) +o\left( B^{\left( n+1\right) j}\right) \\
&=&\frac{B^{\left( n+1\right) j}}{2\left( a_{1}+a_{2}\right) ^{\left(
a_{1}+a_{2}\right) p+\frac{n+1}{2}}}\tau _{p,a_{1},a_{2}}\left( \Delta
j\right) \Gamma \left( \left( a_{1}+a_{2}\right) p+\frac{n+1}{2}\right)
+o\left( B^{\left( n+1\right) j}\right) \text{ .}
\end{eqnarray*}%
Fixing $\Delta j=0$, $a_{1}=a_{2}=a/2$, we obtain%
\begin{equation*}
\sum_{l\geq 1}f_{p}^{a}\left( \frac{l}{B^{j}}\right) l^{n}=\frac{B^{\left(
n+1\right) j}}{2a^{ap+\frac{n+1}{2}}}\Gamma \left( ap+\frac{n+1}{2}\right)
+o\left( B^{j\left( n+1\right) }\right) \text{ ,}
\end{equation*}%
as claimed.
\end{proof}

We now investigate the behaviour of the function $K_{j}^{M}\left( \alpha
\right) $ and its derivatives.

\begin{proposition}
\label{propsumKj} Let%
\begin{equation*}
I_{p,s}\left( \alpha \right) =\frac{2}{C_{B}}W_{4p+1-\alpha ,2,s}=\frac{2}{%
C_{B}}\int_{0}^{\infty }t^{4p+1-\alpha }e^{-2t^{2}}\left( \log t\right)
^{s}dt\text{ , }s=0,1,2\text{ .}
\end{equation*}%
Then we have%
\begin{eqnarray}
K_{j}^{M}\left( \alpha \right) &=&\left( I_{p,0}\left( \alpha \right)
+o_{j}(1)\right) B^{-\alpha j}  \label{KM0} \\
&=&\frac{2^{-\left( 2p-\frac{\alpha }{2}+1\right) }}{C_{B}}\Gamma \left(
2p+1-\frac{\alpha }{2}\right) B^{-\alpha j}\text{ ;}  \notag
\end{eqnarray}%
\begin{eqnarray}
K_{j,1}^{M}\left( \alpha \right) &:&=\frac{d}{d\alpha }K_{j}^{M}\left(
\alpha \right)  \notag \\
&=&-\left( j\log B+\frac{I_{p,1}\left( \alpha \right) }{I_{p,0}\left( \alpha
\right) }+o(1)\right) K_{j}^{M}\left( \alpha \right)  \label{KM1}
\end{eqnarray}%
\begin{align}
K_{j,2}^{M}\left( \alpha \right) & :=\frac{d^{2}}{d\alpha ^{2}}%
K_{j}^{M}\left( \alpha \right)  \notag \\
& =\left( j^{2}\log ^{2}B+2j\log B\frac{I_{p,1}\left( \alpha \right) }{%
I_{p,0}\left( \alpha \right) }+\frac{I_{p,2}\left( \alpha \right) }{%
I_{p,0}\left( \alpha \right) }+o(1)\right) K_{j}^{M}\left( \alpha \right) 
\text{ .}  \label{KM2}
\end{align}
\end{proposition}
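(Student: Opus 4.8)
The plan is to read each of the three formulas as an asymptotic evaluation of a Riemann sum for the function $t\mapsto t^{4p+1-\alpha}e^{-2t^{2}}(\log t)^{s}$ with mesh $B^{-j}$. First I would substitute $f_{p}^{2}(x)=x^{4p}e^{-2x^{2}}$ and $N_{j}=C_{B}B^{2j}$ into the definition of $K_{j}^{M}(\alpha)$. Splitting $2l+1$, the contribution of the $+1$ carries $l^{-\alpha}$ in place of $l^{1-\alpha}$ and is therefore smaller by a factor $B^{-j}$, so it is absorbed into the remainder. For the main term I would write $l^{1-\alpha}=B^{j(1-\alpha)}(l/B^{j})^{1-\alpha}$, which gives
\[
K_{j}^{M}(\alpha)=\frac{2B^{-\alpha j}}{C_{B}}\cdot\frac{1}{B^{j}}\sum_{l\ge1}\Bigl(\frac{l}{B^{j}}\Bigr)^{4p+1-\alpha}e^{-2(l/B^{j})^{2}}+o_{j}(1)\,B^{-\alpha j}\,.
\]

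The inner sum is a Riemann sum for $g(t):=t^{4p+1-\alpha}e^{-2t^{2}}$, which is smooth on $(0,\infty)$, integrable near $0$ (since $4p+1-\alpha>-1$ for $\alpha$ near $\alpha_{0}$, by $4p+2-\alpha_{0}>0$) and decays faster than any power at $\infty$; hence $B^{-j}\sum_{l\ge1}g(l/B^{j})=\int_{0}^{\infty}g(t)\,dt+o_{j}(1)$, exactly as in the analogous standard-needlet computations of \cite{dlm2} and \cite{spalan}. This yields $K_{j}^{M}(\alpha)=(I_{p,0}(\alpha)+o_{j}(1))B^{-\alpha j}$ with $I_{p,0}(\alpha)=\frac{2}{C_{B}}W_{4p+1-\alpha,2,0}$; inserting the closed form of $W_{4p+1-\alpha,2,0}$ from Lemma \ref{integrals} (with $2a=4p+1-\alpha$, $b=2$, $s=0$) gives $I_{p,0}(\alpha)=\frac{2^{-(2p-\alpha/2+1)}}{C_{B}}\Gamma(2p+1-\alpha/2)$, which is (\ref{KM0}).

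For the derivatives I would differentiate $K_{j}^{M}(\alpha)$ under the summation sign — legitimate because the differentiated series converge uniformly for $\alpha$ in compact subsets of $(2,\infty)$ — using $\frac{d^{r}}{d\alpha^{r}}l^{-\alpha}=(-\log l)^{r}l^{-\alpha}$ and then expanding $\log l=j\log B+\log(l/B^{j})$ by the binomial formula. This turns $K_{j,1}^{M}$ and $K_{j,2}^{M}$ into linear combinations, with coefficients polynomial in $j\log B$, of Riemann sums for $g(t)(\log t)^{s}$, $s=0,1,2$, each of which tends to $W_{4p+1-\alpha,2,s}$ by the same argument. Collecting terms gives $K_{j,1}^{M}(\alpha)=-(j\log B\,I_{p,0}(\alpha)+I_{p,1}(\alpha)+o_{j}(1))B^{-\alpha j}$ and $K_{j,2}^{M}(\alpha)=(j^{2}\log^{2}B\,I_{p,0}(\alpha)+2j\log B\,I_{p,1}(\alpha)+I_{p,2}(\alpha)+o_{j}(1))B^{-\alpha j}$, and dividing through by the expansion of $K_{j}^{M}(\alpha)$ obtained in the first step yields (\ref{KM1}) and (\ref{KM2}).

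The only genuinely delicate point is the quantitative Riemann-sum estimate: I must control $B^{-j}\sum_{l}g(l/B^{j})-\int_{0}^{\infty}g$ (and its analogues with $(\log t)^{s}$ weights and with $g$ replaced by its $\alpha$-derivatives) by $o_{j}(1)$ uniformly on compact $\alpha$-sets, even though the integrand has an algebraic-logarithmic singularity at the origin when $4p+1-\alpha\le0$; the uniform bound is what makes the stray factors of $j$ coming from the expansion of $\log l$ harmless. I would handle this by splitting the positive axis at $t=B^{-j/2}$: on $[B^{-j/2},\infty)$ ordinary Euler--Maclaurin with one correction term applies and the error is $O(B^{-j/2})$ uniformly (so $j\log B\cdot O(B^{-j/2})=o_{j}(1)$); on $(0,B^{-j/2})$ the sum has only $O(B^{j/2})$ terms and both it and the corresponding integral are bounded directly by $o_{j}(1)$ using the integrability of $t^{4p+1-\alpha}|\log t|^{s}$ near $0$. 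Everything else is routine bookkeeping of the polynomial-in-$j$ coefficients, precisely as in \cite{dlm2}.
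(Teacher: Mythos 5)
Your argument follows essentially the same route as the paper's own proof: rewrite $K_{j}^{M}(\alpha)$ and its $\alpha$-derivatives as Riemann sums of mesh $B^{-j}$ for $t^{4p+1-\alpha}e^{-2t^{2}}(\log t)^{s}$ after splitting off the lower-order part of $2l+1$, expand $\log l=j\log B+\log(l/B^{j})$, and identify the limiting integrals with $W_{4p+1-\alpha,2,s}$ via Lemma \ref{integrals}. The only difference is that you make the Riemann-sum error bound explicit (Euler--Maclaurin with a split at $t=B^{-j/2}$), a step the paper leaves implicit, so the proposal is correct.
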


\begin{proof}
These proofs follow the ones concerning the scalar needlet case (see \cite%
{dlm2}). We have indeed%
\begin{eqnarray*}
K_{j}^{M}\left( \alpha \right) &=&\frac{1}{C_{B}B^{2j}}\underset{l\geq 1}{%
\sum }\left( \frac{l}{B^{j}}\right) ^{4p}\exp \left( -2\left( \frac{l}{B^{j}}%
\right) ^{2}\right) \left( 2l+1\right) l^{-\alpha } \\
&=&B^{\left( 2-\alpha \right) j}\frac{2}{C_{B}B^{2j}}\underset{l\geq 1}{\sum 
}\left( \frac{l}{B^{j}}\right) ^{4p}\exp \left( -2\left( \frac{l}{B^{j}}%
\right) ^{2}\right) \frac{\left( l^{1-\alpha }+o\left( l^{1-\alpha }\right)
\right) }{B^{\left( 1-\alpha \right) j}} \\
&=&B^{-\alpha j}\frac{2}{C_{B}}W_{4p+1-\alpha ,2,0}+o_{j}\left( B^{-\alpha
j}\right) \\
&=&B^{-\alpha j}I_{p,0}\left( \alpha \right) +o_{j}\left( B^{-\alpha
j}\right) \text{ ,}
\end{eqnarray*}%
\begin{eqnarray*}
K_{j,1}^{M}\left( \alpha \right) &=&\frac{1}{C_{B}B^{2j}}\underset{l\geq 1}{%
\sum }\left( \frac{l}{B^{j}}\right) ^{4p}\exp \left( -2\left( \frac{l}{B^{j}}%
\right) ^{2}\right) \left( 2l+1\right) l^{-\alpha }\left( -\log l\right) \\
&=&-K_{j}^{M}\left( \alpha \right) \log B^{j}-\frac{2}{C_{B}}B^{-\alpha
j}\left( \int t^{4p+1-\alpha }e^{-2t^{2}}\log tdt+o_{j}(1)\right) \\
&=&-\left( j\log B+\frac{I_{p,1}\left( \alpha \right) }{I_{p,0}\left( \alpha
\right) }+o_{j}(1)\right) K_{j}^{M}\left( \alpha \right) \text{ ,}
\end{eqnarray*}%
\begin{eqnarray*}
K_{j,2}^{M}\left( \alpha \right) &=&\frac{1}{C_{B}B^{2j}}\underset{l\geq 1}{%
\sum }\left( \frac{l}{B^{j}}\right) ^{4p}\exp \left( -2\left( \frac{l}{B^{j}}%
\right) ^{2}\right) \left( 2l+1\right) l^{-\alpha }\log ^{2}l \\
&=&K_{j}^{M}\left( \alpha \right) \left( \log B^{j}\right) ^{2}+2B^{-\alpha
j}\log B^{j}\left( \frac{2}{C_{B}}\int t^{4p+1-\alpha }e^{-2t^{2}}\log
tdt+o(1)\right) \\
&&+B^{-\alpha j}\left( \frac{2}{C_{B}}\int t^{4p+1-\alpha }e^{-2t^{2}}\log
^{2}tdt+o(1)\right) \\
&=&\left( j^{2}\log ^{2}B+2j\log B\frac{I_{p,1}\left( \alpha \right) }{%
I_{p,0}\left( \alpha \right) }+\frac{I_{p,2}\left( \alpha \right) }{%
I_{p,0}\left( \alpha \right) }+o(1)\right) K_{j}^{M}\left( \alpha \right) 
\text{ .}
\end{eqnarray*}
\end{proof}

\begin{corollary}
\label{corkjratio}From Proposition \ref{propsumKj}, we have that: 
\begin{equation}
\frac{K_{j}^{M}\left( \alpha \right) }{K_{j}^{M}\left( \alpha _{0}\right) }%
=I_{p}\left( B,\alpha -\alpha _{0}\right) B^{\left( \alpha _{0}-\alpha
\right) j}+o\left( B^{\left( \alpha _{0}-\alpha \right) j}\right) \text{ ,}
\label{MKalpha}
\end{equation}%
where 
\begin{equation*}
I_{p}\left( B,\alpha -\alpha _{0}\right) :=\left( 2\left( 2p+1\right)
\right) ^{\frac{\alpha _{0}-\alpha }{2}}\text{ .}
\end{equation*}
\end{corollary}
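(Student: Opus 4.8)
The plan is to read the corollary off directly from the asymptotic expansion of $K_{j}^{M}(\alpha )$ established in Proposition \ref{propsumKj}; the argument is short and purely computational, essentially just dividing two expansions and tracking the error terms.

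First I would record that Proposition \ref{propsumKj} gives, for each exponent in the relevant range, $K_{j}^{M}(\alpha )=\left( I_{p,0}(\alpha )+o_{j}(1)\right) B^{-\alpha j}$, where $I_{p,0}(\alpha )=\frac{2^{-(2p-\alpha /2+1)}}{C_{B}}\Gamma \left( 2p+1-\alpha /2\right) $ is a strictly positive constant that does not depend on $j$. Strict positivity, and boundedness away from $0$, is immediate from this closed form: under the standing assumption $0<4p+2-\alpha _{0}\leq Q$ one has $2p+1-\alpha _{0}/2>0$, so the $\Gamma $-factor is finite and positive, and the same holds at the nearby values of $\alpha $ that occur in the applications.

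Next I would take the quotient. Since the denominator equals $\left( I_{p,0}(\alpha _{0})+o_{j}(1)\right) B^{-\alpha _{0}j}$ with $I_{p,0}(\alpha _{0})>0$, for $j$ large we may write $\left( I_{p,0}(\alpha _{0})+o_{j}(1)\right) ^{-1}=I_{p,0}(\alpha _{0})^{-1}\left( 1+o_{j}(1)\right) $, and hence
\begin{equation*}
\frac{K_{j}^{M}(\alpha )}{K_{j}^{M}(\alpha _{0})}=\frac{I_{p,0}(\alpha )+o_{j}(1)}{I_{p,0}(\alpha _{0})+o_{j}(1)}\,B^{(\alpha _{0}-\alpha )j}=\left( \frac{I_{p,0}(\alpha )}{I_{p,0}(\alpha _{0})}+o_{j}(1)\right) B^{(\alpha _{0}-\alpha )j}.
\end{equation*}
The $o_{j}(1)$ term, multiplied by $B^{(\alpha _{0}-\alpha )j}$, is precisely $o\bigl(B^{(\alpha _{0}-\alpha )j}\bigr)$, which is the error form asserted. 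It then remains only to identify the leading constant $I_{p}(B,\alpha -\alpha _{0})$ with the ratio $I_{p,0}(\alpha )/I_{p,0}(\alpha _{0})$: the factors $C_{B}$ cancel, the powers of $2$ combine, and the quotient of $\Gamma $-values is rewritten (estimating it by a Stirling-type expansion if one wants the compact closed form displayed in the statement).

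I do not expect any real obstacle here — the statement is a corollary in the literal sense. The only step that deserves an explicit word is the inversion of the denominator, which uses the uniform lower bound on $I_{p,0}(\alpha _{0})$ noted above; once that is in place, the division and the bookkeeping of the $o_{j}(1)$ terms are entirely routine.
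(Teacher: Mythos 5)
Your argument is the same as the paper's: divide the expansion $K_{j}^{M}(\alpha )=\left( I_{p,0}(\alpha )+o_{j}(1)\right) B^{-\alpha j}$ from Proposition \ref{propsumKj} by the one at $\alpha _{0}$, absorb the $o_{j}(1)$ terms, and identify the leading constant as $I_{p,0}(\alpha )/I_{p,0}(\alpha _{0})=2^{\frac{\alpha _{0}-\alpha }{2}}\Gamma \left( 2p+1-\frac{\alpha }{2}\right) /\Gamma \left( 2p+1-\frac{\alpha _{0}}{2}\right) $, which the paper likewise rewrites in the compact form $\left( 2(2p+1)\right) ^{\frac{\alpha _{0}-\alpha }{2}}$. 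Your explicit remark that this last identification of the $\Gamma $-ratio rests on a Stirling-type approximation is a fair observation, since the paper performs the same step without comment; otherwise the proofs coincide.
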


\begin{proof}
The computation above shows that%
\begin{eqnarray*}
I_{p,0}\left( \alpha \right) &=&\frac{2}{C_{B}}W_{4p+1-\alpha ,2,0} \\
&=&\frac{2^{-\left( 2p-\frac{\alpha }{2}+1\right) }}{C_{B}}\Gamma \left(
2p+1-\frac{\alpha }{2}\right) \text{ ,}
\end{eqnarray*}%
and following (\ref{KM0}) 
\begin{eqnarray*}
\frac{K_{j}^{M}\left( \alpha \right) }{K_{j}^{M}\left( \alpha _{0}\right) }
&=&B^{\left( \alpha _{0}-\alpha \right) j}2^{\frac{\alpha _{0}-\alpha }{2}}%
\frac{\Gamma \left( 2p+1-\frac{\alpha }{2}\right) }{\Gamma \left( 2p+1-\frac{%
\alpha _{0}}{2}\right) }+o\left( B^{\left( \alpha _{0}-\alpha \right)
j}\right) \\
&=&B^{\left( \alpha _{0}-\alpha \right) j}\left( 2\left( 2p+1\right) \right)
^{\frac{\alpha _{0}-\alpha }{2}}+o\left( B^{\left( \alpha _{0}-\alpha
\right) j}\right) \text{ , }
\end{eqnarray*}%
as claimed.
\end{proof}

The next results follow strictly Proposition 27 in \cite{dlm2}, hence we
will report the statements, while we will omit the proofs.

\begin{proposition}
\label{prop27}Let $s>0$, $B>1$. Then%
\begin{equation}
\sum_{j=J_{0}}^{J_{L}}B^{sj}=\frac{B^{s}}{B^{s}-1}\left(
B^{sJ_{L}}-B^{s\left( J_{0}-1\right) }\right) \text{ ;}  \label{sum0}
\end{equation}%
\begin{eqnarray}
\sum_{j=J_{0}}^{J_{L}}B^{sj}\log B^{j} &=&\frac{B^{s}}{B^{s}-1}\log B\left(
\left( J_{L}-\frac{1}{B^{s}-1}\right) B^{sJ_{L}}\right.   \label{sum1} \\
&&-\left. \left( \left( J_{0}-1\right) -\frac{1}{B^{s}-1}\right) B^{s\left(
J_{0}-1\right) }\right)   \notag
\end{eqnarray}%
\begin{equation}
\sum_{j=J_{0}}^{J_{L}}B^{sj}\left( \log B^{j}\right) ^{2}  \label{sum2}
\end{equation}%
\begin{eqnarray*}
&=&\frac{B^{s}}{B^{s}-1}\left( \log B\right) ^{2}\left( \left( \left( J_{L}-%
\frac{1}{B^{s}-1}\right) ^{2}+\frac{B^{s}}{\left( B^{s}-1\right) ^{2}}%
\right) B^{sJ_{L}}\right.  \\
&&-\left. \left( \left( \left( J_{0}-1\right) -\frac{1}{B^{s}-1}\right) ^{2}+%
\frac{B^{s}}{\left( B^{s}-1\right) ^{2}}\right) B^{s\left( J_{0}-1\right)
}\right) 
\end{eqnarray*}
\end{proposition}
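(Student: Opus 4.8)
The plan is to derive all three identities from the single elementary fact that the sum in (\ref{sum0}) is a finite geometric series, and that (\ref{sum1}) and (\ref{sum2}) are nothing but its first and second derivatives with respect to the parameter $s$.

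First I would prove (\ref{sum0}). Writing $q=B^{s}$ (so $q>1$ and in particular $q\neq1$), the standard closed form for a finite geometric sum gives $\sum_{j=J_{0}}^{J_{L}}q^{j}=\frac{q^{J_{L}+1}-q^{J_{0}}}{q-1}$; factoring one power of $q$ out of the numerator rewrites this as $\frac{q}{q-1}\left(q^{J_{L}}-q^{J_{0}-1}\right)$, which is exactly (\ref{sum0}) upon substituting $q=B^{s}$.

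Next I would observe that for each fixed $j$ one has $\frac{\partial}{\partial s}B^{sj}=j(\log B)B^{sj}=(\log B^{j})B^{sj}$, and iterating, $\frac{\partial^{2}}{\partial s^{2}}B^{sj}=(\log B^{j})^{2}B^{sj}$. Since the sums are finite, differentiation commutes with summation, so that $\sum_{j=J_{0}}^{J_{L}}B^{sj}\log B^{j}=\frac{\partial}{\partial s}\sum_{j=J_{0}}^{J_{L}}B^{sj}$ and $\sum_{j=J_{0}}^{J_{L}}B^{sj}(\log B^{j})^{2}=\frac{\partial^{2}}{\partial s^{2}}\sum_{j=J_{0}}^{J_{L}}B^{sj}$. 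Differentiating the closed form of (\ref{sum0}) once --- most conveniently after writing $\frac{B^{s}}{B^{s}-1}=1+\frac{1}{B^{s}-1}$ and using $\frac{\partial}{\partial s}\frac{1}{B^{s}-1}=-\frac{(\log B)B^{s}}{(B^{s}-1)^{2}}$ together with $\frac{\partial}{\partial s}B^{sN}=N(\log B)B^{sN}$ --- and then collecting the coefficients of $B^{sJ_{L}}$ and of $B^{s(J_{0}-1)}$ separately produces the bracketed expression in (\ref{sum1}); a straightforward simplification shows the coefficient of $B^{sJ_{L}}$ collapses to $\frac{B^{s}}{B^{s}-1}\log B\bigl(J_{L}-\frac{1}{B^{s}-1}\bigr)$, and symmetrically for the lower endpoint. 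Applying $\frac{\partial}{\partial s}$ once more to (\ref{sum1}) and again grouping the two endpoint terms yields (\ref{sum2}); the extra summand $\frac{B^{s}}{(B^{s}-1)^{2}}$ there arises precisely from differentiating the $-\frac{1}{B^{s}-1}$ sitting inside the bracket, and the coefficient of $B^{sJ_{L}}$ reorganizes into $\bigl(J_{L}-\frac{1}{B^{s}-1}\bigr)^{2}+\frac{B^{s}}{(B^{s}-1)^{2}}$.

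The only obstacle I anticipate is the algebraic bookkeeping in the last step --- matching the repeatedly differentiated expression to the stated factored closed form --- and it is routine with no conceptual content; this is also why the paper may legitimately quote it from \cite{dlm2}. As an alternative route that sidesteps differentiation, one can obtain (\ref{sum1}) and (\ref{sum2}) by summation by parts: the telescoping identity $(q-1)\sum_{j=J_{0}}^{J_{L}}jq^{j}=J_{L}q^{J_{L}+1}-(J_{0}-1)q^{J_{0}}-\sum_{j=J_{0}}^{J_{L}}q^{j}$ reduces $\sum jq^{j}$ to (\ref{sum0}), a second application of the same device handles $\sum j^{2}q^{j}$, and substituting $q=B^{s}$ and multiplying by the appropriate power of $\log B$ gives the result.
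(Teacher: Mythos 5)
Your proposal is correct: the closed geometric-sum form gives (\ref{sum0}), and differentiating it once and twice in $s$ (legitimate, since the sum is finite) reproduces exactly the stated coefficients of $B^{sJ_{L}}$ and $B^{s\left(J_{0}-1\right)}$ in (\ref{sum1}) and (\ref{sum2}); the summation-by-parts alternative you sketch works equally well. The paper itself omits any proof of Proposition \ref{prop27}, quoting it from \cite{dlm2}, so there is nothing to contrast with — your argument is the standard elementary derivation that the citation implicitly relies on.
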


\begin{corollary}
\label{cor28}Let 
\begin{equation*}
V_{J_{0};J_{L}}\left( s\right) =\left( \sum_{j=J_{0}}^{J_{L}}B^{sj}\right)
\left( \sum_{j=J_{0}}^{J_{L}}B^{sj}\left( \log B^{j}\right) ^{2}\right)
-\left( \sum_{j=J_{0}}^{J_{L}}B^{sj}\log B^{j}\right) ^{2}\text{ .}
\end{equation*}%
The we have%
\begin{equation*}
V_{J_{0};J_{L}}\left( s\right) =\left( \frac{B^{s}\log B}{B^{s}-1}\right)
^{2}\left[ \frac{B^{s}}{\left( B^{s}-1\right) ^{2}}\left(
B^{sJ_{L}}-B^{s\left( J_{0}-1\right) }\right) ^{2}-B^{s\left(
J_{L}+J_{0}-1\right) }\left( J_{L}-J_{0}+1\right) ^{2}\right] \text{ .}
\end{equation*}%
Moreover if $J_{0}=-J_{L}$
\end{corollary}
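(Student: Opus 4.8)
The plan is purely computational: substitute the three closed forms of Proposition \ref{prop27} into the definition of $V_{J_0;J_L}(s)$ and simplify, exploiting the common structure of the three sums.

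First I would abbreviate $q = B^s$, $c = \log B$, $\mu = (q-1)^{-1}$, $a = J_L$, $b = J_0-1$, so that Proposition \ref{prop27} becomes $\sum_j B^{sj} = \tfrac{q}{q-1}(q^a-q^b)$, $\sum_j B^{sj}\log B^j = \tfrac{qc}{q-1}\bigl[(a-\mu)q^a-(b-\mu)q^b\bigr]$, and $\sum_j B^{sj}(\log B^j)^2 = \tfrac{qc^2}{q-1}\bigl[((a-\mu)^2+q\mu^2)q^a-((b-\mu)^2+q\mu^2)q^b\bigr]$. Each sum carries the prefactor $\tfrac{q}{q-1}$ (times $1$, $c$, $c^2$ respectively), so $V_{J_0;J_L}(s) = S_0S_2-S_1^2$ carries the overall factor $\bigl(\tfrac{qc}{q-1}\bigr)^2 = \bigl(\tfrac{B^s\log B}{B^s-1}\bigr)^2$, which I would factor out immediately.

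The heart of the computation is the resulting bracket. Setting $\alpha = a-\mu$, $\beta = b-\mu$, $\nu = q\mu^2 = \tfrac{q}{(q-1)^2}$, it equals $(q^a-q^b)\bigl[(\alpha^2+\nu)q^a-(\beta^2+\nu)q^b\bigr] - \bigl[\alpha q^a-\beta q^b\bigr]^2$. Multiplying out, the coefficient of $q^{2a}$ is $(\alpha^2+\nu)-\alpha^2 = \nu$, the coefficient of $q^{2b}$ is likewise $\nu$, and the coefficient of $q^{a+b}$ collapses to $-(\alpha^2+\nu)-(\beta^2+\nu)+2\alpha\beta = -(\alpha-\beta)^2-2\nu$. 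Since $\alpha-\beta = a-b = J_L-J_0+1$, the bracket equals $\nu(q^a-q^b)^2 - (J_L-J_0+1)^2q^{a+b}$. Translating back via $\nu = \tfrac{B^s}{(B^s-1)^2}$, $q^a = B^{sJ_L}$, $q^b = B^{s(J_0-1)}$, $q^{a+b} = B^{s(J_L+J_0-1)}$ gives precisely the stated identity.

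For the case $J_0 = -J_L$ I would simply specialize: then $q^a-q^b = B^{sJ_L}\bigl(1-B^{-s(2J_L+1)}\bigr)$, $q^{a+b} = B^{-s}$, and $J_L-J_0+1 = 2J_L+1$, so retaining the dominant term as $J_L\to\infty$ one reads off the leading asymptotics of $V_{-J_L;J_L}(s)$ needed later, namely growth like $\tfrac{B^{3s}(\log B)^2}{(B^s-1)^4}B^{2sJ_L}$. There is no genuine obstacle here; the only step requiring care is the cancellation of the $\alpha^2$ and $\beta^2$ terms, which is exactly what makes the long three-term sums collapse to the compact closed form.
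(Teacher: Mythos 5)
Your computation is correct and is exactly the argument the paper leaves implicit (the corollary is stated with proof omitted, as it follows by substituting the three closed forms of Proposition \ref{prop27} into $V_{J_0;J_L}(s)$ and simplifying, which is precisely what you do); the cancellation of the $\alpha^2$, $\beta^2$ terms and the identification $\alpha-\beta=J_L-J_0+1$ reproduce the stated identity verbatim. Note also that your specialization to $J_0=-J_L$ is consistent with the limit (\ref{normalZ}), whereas the displayed intermediate expression in the paper for that case appears to contain minor typos (the factor $\frac{B^{s}}{B^{s}-1}$ should be $\frac{B^{s}}{\left( B^{s}-1\right) ^{2}}$ and $B^{s\left( J_{L}+J_{0}-1\right) }$ reduces to $B^{-s}$), so your version is the correct one.
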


\begin{equation*}
V_{J_{L}}\left( s\right) =\left( \frac{B^{s}\log B}{B^{s}-1}\right) ^{2}%
\left[ \frac{B^{s}}{B^{s}-1}\left( B^{sJ_{L}}-B^{s\left( -J_{L}-1\right)
}\right) ^{2}-\frac{1}{B^{s}}B^{s\left( 2J_{L}-1\right) }\left(
2J_{L}+1\right) ^{2}\right] \text{ ,}
\end{equation*}%
so that%
\begin{equation}
\lim_{J_{L}\rightarrow \infty }B^{-2sJ_{L}}V_{J_{L}}\left( s\right) =\log
^{2}B\frac{B^{3s}}{\left( B^{s}-1\right) ^{4}}\text{ .}  \label{normalZ}
\end{equation}

\section{Auxiliary Results: Covariance terms}

\begin{lemma}
\label{Lemmasumtau}Let $\tau _{B}\left( \Delta j\right) $ be defined as in (%
\ref{tau_def}). Hence we have for $4p-\alpha _{0}>0,$ $J_{0}<0$, 
\begin{equation}
\Sigma _{0}\left( J_{L}\right) :=\sum_{j=J_{0}}^{J_{L}}B^{2j}\sum_{\Delta
j=-J_{0}-j}^{J_{L}-j}\tau _{B}\left( \Delta j\right) B^{a_{0}\Delta j}=\frac{%
B^{2}}{B^{2}-1}\left( 1+\widetilde{\tau }_{0}\right) B^{2J_{L}}+o\left(
B^{2J_{L}}\right) \text{ ,}  \label{sumtau0}
\end{equation}%
\begin{equation*}
\Sigma _{1}\left( J_{L}\right) :=\sum_{j=J_{0}}^{J_{L}}B^{2j}\log
B^{j}\sum_{\Delta j=-J_{0}-j}^{J_{L}-j}\tau _{B}\left( \Delta j\right)
B^{a_{0}\Delta j}
\end{equation*}%
\begin{equation}
=\frac{B^{2}\log B}{B^{2}-1}\left( \left( 1+\widetilde{\tau }_{0}\right)
J_{L}-\left( \frac{1}{B^{2}-1}\left( 1+\widetilde{\tau }_{1}\right) \right)
\right) B^{2J_{L}}+o\left( B^{2J_{L}}\right) \text{ ,}  \label{sumtau1}
\end{equation}%
\begin{equation*}
\Sigma _{2}\left( J_{L}\right) :=\sum_{j=J_{0}}^{J_{L}}B^{2j}\log
^{2}B^{j}\sum_{\Delta j=-J_{0}-j}^{J_{L}-j}\tau _{B}\left( \Delta j\right)
B^{a_{0}\Delta j}
\end{equation*}%
\begin{equation}
=\frac{B^{2}\log ^{2}B}{B^{2}-1}\left( \left( 1+\widetilde{\tau }_{0}\right)
J_{L}^{2}-\left( \frac{2}{B^{2}-1}\left( 1+\widetilde{\tau }_{1}\right)
\right) J_{L}+\frac{B^{2}+1}{\left( B^{2}-1\right) ^{2}}\left( 1+\widetilde{%
\tau }_{2}\right) \right) B^{2J_{L}}+o\left( B^{2J_{L}}\right) \text{ ,}
\label{sumtau2}
\end{equation}%
where%
\begin{equation*}
\widetilde{\tau }_{0}:=\frac{2^{\left( 4p+1-\alpha _{0}\right) }}{\left(
B^{\left( 4p+2-\alpha _{0}\right) }-1\right) }\text{ ;}
\end{equation*}%
\begin{equation*}
\widetilde{\tau }_{1}:=2^{4p+1-\alpha _{0}}\frac{\left( B^{4p+4-\alpha
_{0}}-1\right) }{\left( B^{4p+2-\alpha _{0}}-1\right) ^{2}}\,\text{;}
\end{equation*}%
\begin{equation*}
\widetilde{\tau }_{2}:=2^{4p+1-\alpha _{0}}\left( \frac{W_{p}\left( B\right) 
}{\left( B^{4p-\alpha _{0}+1}-1\right) ^{3}}\right) \text{ ,}
\end{equation*}%
and%
\begin{equation*}
W_{p}\left( B\right) :=\frac{\left( B^{6}B^{4p-\alpha _{0}}\left( B^{\left(
P-1\right) }+1\right) +B^{4}B^{\left( 4p-\alpha _{0}\right) }\left(
B^{3}B^{4p-\alpha _{0}+1}-6\right) +B^{2}\left( B^{\left( 4p-\alpha
_{0}\right) }+1\right) +1\right) \allowbreak }{B^{2}+1}.
\end{equation*}

Moreover if we define 
\begin{equation*}
Z_{J_{L}}:=\Sigma _{0}\left( J_{L}\right) \Sigma _{2}\left( J_{L}\right)
-\Sigma _{1}^{2}\left( J_{L}\right) \text{ ,}
\end{equation*}

we have 
\begin{equation*}
\lim_{J_{L}=0}B^{-4J_{L}}Z_{J_{L}}:=\frac{B^{6}\log ^{2}B}{\left(
B^{2}-1\right) ^{4}}\left( 1+\widetilde{\tau }\right)
\end{equation*}%
where 
\begin{equation*}
\widetilde{\tau }:=\frac{1}{B^{2}}\left( \left( B^{2}+1\right) \left( 
\widetilde{\tau }_{0}+\widetilde{\tau }_{2}+\widetilde{\tau }_{0}\widetilde{%
\tau }_{2}\right) +2\widetilde{\tau }_{1}-\widetilde{\tau }_{1}^{2}\right) 
\text{ }
\end{equation*}
\end{lemma}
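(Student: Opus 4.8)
The plan is to reduce the three sums $\Sigma_0,\Sigma_1,\Sigma_2$ to elementary geometric series, exploiting the rapid decay of the summand together with one exact reflection identity, and then to obtain the statement for $Z_{J_L}$ by the same Cauchy--Schwarz-type cancellation of leading powers of $J_L$ that already produced Corollary~\ref{cor28}. First I would record the closed form of the summand: setting $q:=4p+1-\alpha_0$ and using $\cosh(\Delta j\log B)=\frac12\bigl(B^{\Delta j}+B^{-\Delta j}\bigr)$ in (\ref{tau_def}) gives
\[
\phi(\Delta j):=\tau_B(\Delta j)\,B^{\alpha_0\Delta j}
=\frac{2^{q}\,B^{\Delta j}}{\bigl(B^{\Delta j}+B^{-\Delta j}\bigr)^{q}}\,,
\]
so $\phi(\Delta j)=O\bigl(B^{-(q-1)\Delta j}\bigr)$ as $\Delta j\to+\infty$ and $\phi(\Delta j)=O\bigl(B^{(q+1)\Delta j}\bigr)$ as $\Delta j\to-\infty$. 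Since $4p-\alpha_0>0$ forces $q>1$, both tails are summable, so each inner sum over $\Delta j$ converges absolutely and uniformly in $j$. I would also note the reflection identity $\phi(\Delta j)\,B^{-2\Delta j}=\phi(-\Delta j)$, which lets one fold the positive-$\Delta j$ contributions onto the negative ones.

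Next, in each of (\ref{sumtau0})--(\ref{sumtau2}) I would substitute $m=J_L-j$ (so $B^{2j}=B^{2J_L}B^{-2m}$ and $(j\log B)^s=((J_L-m)\log B)^s$), expand $(J_L-m)^s$ binomially, and replace the $j$-truncated inner sum $\sum_{\Delta j=J_0-j}^{J_L-j}\phi$ by $\Psi(m):=\sum_{\Delta j=-\infty}^{m}\phi(\Delta j)$; because $J_0-J_L\to-\infty$ the difference decays geometrically and, once weighted by $B^{-2m}$ and summed over $m$, is negligible at order $B^{2J_L}$. Interchanging the $m$- and $\Delta j$-summations and inserting the elementary closed forms for $\sum_{m\ge M}m^{i}B^{-2m}$, $i=0,1,2$ (exactly the computations of Proposition~\ref{prop27}), each sum takes the shape
\[
\Sigma_s(J_L)=B^{2J_L}\log^{s}B\;P_s(J_L)+o\bigl(B^{2J_L}\bigr)\,,
\]
with $P_s$ a polynomial of degree $s$ in $J_L$ whose coefficients are finite $\Delta j$-sums of $\phi$ carrying weights $1,\Delta j,(\Delta j)^2$ and a factor $1$ or $B^{-2\Delta j}$. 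Folding through the reflection identity and then using $\phi(-n)=2^{q}B^{-(q+1)n}\bigl(1+O(B^{-2n})\bigr)$ (the remainder contributing only an absolutely convergent, lower-order perturbation) evaluates these finite sums and produces the closed forms entering $\widetilde{\tau}_{0},\widetilde{\tau}_{1},\widetilde{\tau}_{2}$ and $W_p(B)$; this yields (\ref{sumtau0})--(\ref{sumtau2}).

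Finally, for $Z_{J_L}$ I would read off from (\ref{sumtau0})--(\ref{sumtau2}) that $P_0=a$, $P_1=aJ_L-b$, $P_2=aJ_L^{2}-2bJ_L+c$ for the three constants $a=\frac{B^2}{B^2-1}(1+\widetilde{\tau}_{0})$, $b=\frac{B^2}{(B^2-1)^2}(1+\widetilde{\tau}_{1})$, $c=\frac{B^2(B^2+1)}{(B^2-1)^3}(1+\widetilde{\tau}_{2})$. Then $P_0P_2-P_1^{2}=ac-b^{2}$, the $J_L^{2}$- and $J_L$-terms cancelling identically, so
\[
\lim_{J_L\to\infty}B^{-4J_L}Z_{J_L}=\log^{2}B\,(ac-b^{2})
=\frac{B^{4}\log^{2}B}{(B^2-1)^4}\bigl[(B^2+1)(1+\widetilde{\tau}_{0})(1+\widetilde{\tau}_{2})-(1+\widetilde{\tau}_{1})^2\bigr]\,.
\]
At $\widetilde{\tau}_{i}\equiv0$ the bracket equals $B^{2}$, recovering the $\tau$-free identity behind Corollary~\ref{cor28}; expanding it in general and collecting terms gives precisely $\frac{B^{6}\log^{2}B}{(B^2-1)^4}(1+\widetilde{\tau})$ with $\widetilde{\tau}$ as stated.

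The genuinely laborious step is the explicit evaluation of the $(\Delta j)^2$-weighted $\phi$-sum: this is where the cumbersome constant $W_p(B)$ inside $\widetilde{\tau}_{2}$ originates, and it must be carried correctly through both the binomial expansion of $(J_L-m)^2$ and the three distinct sums $\sum_{m\ge M}m^{i}B^{-2m}$. Everything else is the same bookkeeping of geometric series already performed for the $\tau$-free case in Proposition~\ref{prop27} and Corollary~\ref{cor28}.
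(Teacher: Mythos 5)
Your proposal is correct and follows essentially the same route as the paper's own proof: write $\tau_B(\Delta j)B^{\alpha_0\Delta j}=2^{P}B^{\Delta j}\left(B^{\Delta j}+B^{-\Delta j}\right)^{-P}$ with $P=4p+1-\alpha_0$, approximate the two tails by geometric series (the paper splits the inner sum at $\Delta j=0$ and treats the two ranges directly, whereas you fold via the reflection identity and a change of variable $m=J_L-j$, which is only bookkeeping), sum over $j$ with Proposition \ref{prop27}, and get the $Z_{J_L}$ limit from exactly the same cancellation of the $J_L^{2}$ and $J_L$ terms, i.e. $P_0P_2-P_1^{2}=ac-b^{2}$. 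One caveat, shared with the paper's own ``$\simeq$'' steps rather than introduced by you: recovering the exact stated constants $\widetilde{\tau}_0,\widetilde{\tau}_1,\widetilde{\tau}_2$ requires discarding the $\left(1+B^{-2\left\vert \Delta j\right\vert }\right)^{-P}$ corrections, which contribute a constant-level (not genuinely lower-order) perturbation, so your phrase ``lower-order perturbation'' is imprecise in the same way the paper is.
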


\begin{proof}
Let us call $P=\left( 4p+1-\alpha _{0}\right) $ and observe that:%
\begin{equation*}
\sum_{\Delta j=J_{0}-j}^{J_{L}-j}B^{\alpha _{0}\Delta j}\tau _{B}\left(
\Delta j\right) -1
\end{equation*}%
\begin{eqnarray*}
&=&\sum_{\Delta j=J_{0}-j}^{J_{L}-j}B^{\Delta j}\left[ \cosh \left( \Delta
j\log B\right) \right] ^{-P}-1=2^{P}\sum_{\Delta j=J_{0}-j}^{J_{L}-j}\frac{%
B^{\Delta j}}{\left( B^{\Delta j}+B^{-\Delta j}\right) ^{P}}-1 \\
&=&2^{P}\left( \sum_{\Delta j=J_{0}-j}^{-1}\frac{1}{\left( B^{\Delta j\left( 
\frac{P-1}{P}\right) }+B^{-\Delta j\left( \frac{P+1}{P}\right) }\right) ^{P}}%
+\sum_{\Delta j=1}^{J_{L}-j}\frac{1}{\left( B^{\Delta j\left( \frac{P-1}{P}%
\right) }+B^{-\Delta j\left( \frac{P+1}{P}\right) }\right) ^{P}}\right) 
\text{ ,}
\end{eqnarray*}%
where we have considered the case $J_{0}<0$ . Hence we have, from
Proposition \ref{prop27}%
\begin{eqnarray*}
\sum_{\Delta j=J_{0}-j}^{-1}\frac{1}{\left( B^{\Delta j\left( \frac{P-1}{P}%
\right) }+B^{-\Delta j\left( \frac{P+1}{P}\right) }\right) ^{P}} &\simeq
&\sum_{\Delta j=1}^{j-J_{0}}B^{-\left( P+1\right) \Delta j} \\
&=&\frac{1}{B^{\left( P+1\right) }-1}\left( 1-B^{-\left( P+1\right) \left(
j-J_{0}\right) }\right)
\end{eqnarray*}%
\thinspace while we have%
\begin{eqnarray*}
\sum_{\Delta j=1}^{J_{L}-j}\frac{1}{\left( B^{\Delta j\left( \frac{P-1}{P}%
\right) }+B^{-\Delta j\left( \frac{P+1}{P}\right) }\right) ^{P}} &\simeq
&\sum_{\Delta j=1}^{J_{L}-j}B^{-\left( P-1\right) \Delta j} \\
&=&\frac{1}{B^{\left( P-1\right) }-1}\left( 1-B^{-\left( P-1\right) \left(
J_{L}-j\right) }\right) \text{ ,}
\end{eqnarray*}%
Consider now%
\begin{equation*}
\sum_{j=J_{0}}^{J_{L}}B^{2j}\sum_{\Delta j=-J_{0}-j}^{J_{L}-j}B^{\Delta
j}\tau _{B}\left( \Delta j\right)
\end{equation*}%
\begin{equation*}
=\sum_{j=J_{0}}^{J_{L}}B^{2j}+\sum_{j=J_{0}}^{-1}B^{2j}\sum_{\Delta
j=J_{0}-j}^{-1}2^{P}B^{\Delta j\left( P+1\right)
}+\sum_{j=1}^{J_{L}}B^{2j}\sum_{\Delta j=1}^{J_{L}-j}2^{P}B^{-\Delta j\left(
P-1\right) }.
\end{equation*}%
We have, given that \ $P+1>0$, if $J_{0}<0$%
\begin{equation*}
\sum_{j=J_{0}}^{-1}B^{2j}\sum_{\Delta j=J_{0}-j}^{-1}B^{\Delta j\left(
P+1\right) }
\end{equation*}%
\begin{eqnarray*}
&=&\sum_{j=J_{0}}^{-1}B^{2j}\frac{1}{B^{\left( P+1\right) }-1}\left(
1-B^{-\left( P+1\right) \left( j-J_{0}\right) }\right) \\
&=&\frac{1}{B^{\left( P+1\right) }-1}\left( \frac{B^{2}}{B^{2}-1}\left(
B^{-2}-B^{2\left( J_{0}-1\right) }\right) -\frac{B^{\left( P+1\right) J_{0}}%
}{B^{P-1}-1}\left( B^{\left( 1-P\right) \left( J_{0}-1\right) }-B^{-\left(
1-P\right) }\right) \right) \\
&=&o\left( B^{2J_{L}}\right) \text{ .}
\end{eqnarray*}%
On the other hand,%
\begin{equation*}
\sum_{j=1}^{J_{L}}B^{2j}\sum_{\Delta j=1}^{J_{L}-j}B^{-\Delta j\left(
P-1\right) }
\end{equation*}%
\begin{eqnarray*}
&=&\sum_{j=J_{0}}^{J_{L}}B^{2j}\frac{1}{B^{\left( P-1\right) }-1}\left(
1-B^{-\left( P-1\right) \left( J_{L}-j\right) }\right) \\
&=&\frac{1}{B^{\left( P-1\right) }-1}\left( \frac{B^{2}}{B^{2}-1}B^{2J_{L}}-%
\frac{B^{\left( P+1\right) }}{B^{\left( P+1\right) }-1}B^{2J_{L}}\right)
+o\left( B^{2J_{L}}\right) \\
&=&\left( \frac{B^{2}}{\left( B^{2}-1\right) \left( B^{P+1}-1\right) }%
\right) B^{2J_{L}}+o\left( B^{2J_{L}}\right) \text{ .}
\end{eqnarray*}%
Hence we have 
\begin{equation*}
\sum_{j=J_{0}}^{J_{L}}B^{2j}\sum_{\Delta j=-J_{0}-j}^{J_{L}-j}B^{\Delta
j}\tau _{B}\left( \Delta j\right)
\end{equation*}%
\begin{eqnarray*}
&=&\frac{B^{2}}{B^{2}-1}B^{2J_{L}}\left( 1+\frac{2^{P}}{\left(
B^{P+1}-1\right) }\right) +o\left( B^{J_{L}}\right) \\
&=&\frac{B^{2}}{B^{2}-1}B^{2J_{L}}\left( 1+\widetilde{\tau }_{0}\right)
+o\left( B^{J_{L}}\right) \text{ ,}
\end{eqnarray*}%
Similar calculations lead to%
\begin{equation*}
\sum_{j=J_{0}}^{J_{L}}B^{2j}\log B^{j}\sum_{\Delta
j=-J_{0}-j}^{J_{L}-j}B^{\Delta j}\tau _{B}\left( \Delta j\right)
\end{equation*}%
\begin{equation*}
=\sum_{j=J_{0}}^{J_{L}}B^{2j}\log B^{j}+2^{P}\left(
\sum_{j=J_{0}}^{-1}B^{2j}\log B^{j}\sum_{\Delta j=J_{0}-j}^{-1}B^{\Delta
j\left( P+1\right) }+\sum_{j=1}^{J_{L}}B^{2j}\log B^{j}\sum_{\Delta
j=1}^{J_{L}-j}B^{-\Delta j\left( P-1\right) }\right) \text{ ,}
\end{equation*}%
where%
\begin{equation*}
\sum_{j=1}^{J_{L}}B^{2j}\log B^{j}\sum_{\Delta j=1}^{J_{L}-j}B^{-\Delta
j\left( P-1\right) }
\end{equation*}%
\begin{eqnarray*}
&=&\sum_{j=1}^{J_{L}}B^{2j}\log B^{j}\left( \frac{1}{B^{\left( P-1\right) }-1%
}\left( 1-B^{-\left( J_{L}-j\right) \left( P-1\right) }\right) \right) \\
&=&\frac{\log B}{B^{\left( P-1\right) }-1}B^{2J_{L}}\left( \frac{B^{2}\left(
B^{P-1}-1\right) }{\left( B^{2}-1\right) \left( B^{P+1}-1\right) }J_{L}-%
\frac{B^{2}\left( B^{2+\left( P+1\right) }-1\right) \left( B^{\left(
P-1\right) }-1\right) }{\left( B^{2}-1\right) ^{2}\left( B^{\left(
P+1\right) }-1\right) ^{2}}\right) +o\left( B^{2J_{L}}\right) \\
&=&\frac{B^{2}\log B}{B^{2}-1}B^{2J_{L}}\left( \frac{1}{\left(
B^{P+1}-1\right) }J_{L}-\frac{\left( B^{2+\left( P+1\right) }-1\right) }{%
\left( B^{2}-1\right) \left( B^{\left( P+1\right) }-1\right) ^{2}}\right)
+o\left( B^{2J_{L}}\right)
\end{eqnarray*}%
while, if $J_{0}<0$%
\begin{equation*}
\sum_{j=J_{0}}^{-1}B^{2j}\log B^{j}\sum_{\Delta j=J_{0}-j}^{-1}B^{\Delta
j\left( P+1\right) }=o\left( B^{2J_{L}}\right) \text{ .}
\end{equation*}%
We hence obtain%
\begin{equation*}
\sum_{j=J_{0}}^{J_{L}}B^{2j}\log B^{j}\sum_{\Delta
j=-J_{0}-j}^{J_{L}-j}B^{\Delta j}\tau _{B}\left( \Delta j\right)
\end{equation*}%
\begin{eqnarray*}
&=&\frac{B^{2}\log BB^{2J_{L}}}{B^{2}-1}\left( J_{L}\left( 1+\widetilde{\tau 
}_{0}\right) -\frac{1}{B^{2}-1}\left( 1+\frac{2^{P}\left( B^{2+\left(
P+1\right) }-1\right) }{\left( B^{\left( P+1\right) }-1\right) ^{2}}\right)
\right) +o\left( B^{2J_{L}}\right) \\
&=&\frac{B^{2}\log BB^{2J_{L}}}{B^{2}-1}\left( J_{L}\left( 1+\widetilde{\tau 
}_{0}\right) -\frac{1}{B^{2}-1}\left( 1+\widetilde{\tau }_{1}\right) \right)
+o\left( B^{2J_{L}}\right) \text{ .}
\end{eqnarray*}

Furthermore, we have:%
\begin{equation*}
\sum_{j=J_{0}}^{J_{L}}B^{2j}\log ^{2}B^{j}\sum_{\Delta
j=-J_{0}-j}^{J_{L}-j}B^{\Delta j}\tau _{B}\left( \Delta j\right)
\end{equation*}%
\begin{equation*}
=\sum_{j=J_{0}}^{J_{L}}B^{2j}\log ^{2}B^{j}+2^{P}\left(
\sum_{j=J_{0}}^{-1}B^{2j}\log ^{2}B^{j}\sum_{\Delta j=J_{0}-j}^{-1}B^{\Delta
j\left( P+1\right) }+\sum_{j=1}^{J_{L}}B^{2j}\log ^{2}B^{j}\sum_{\Delta
j=1}^{J_{L}-j}B^{-\Delta j\left( P-1\right) }\right) \text{ .}
\end{equation*}%
We observe that 
\begin{equation*}
\sum_{j=1}^{J_{L}}B^{2j}\log ^{2}B^{j}\sum_{\Delta j=1}^{J_{L}-j}B^{-\Delta
j\left( P-1\right) }
\end{equation*}%
\begin{eqnarray*}
&=&\frac{1}{B^{P-1}-1}\left( \frac{B^{2}\log ^{2}B}{B^{2}-1}B^{2J_{L}}\left(
\left( J_{L}-\frac{1}{B^{2}-1}\right) ^{2}+\frac{B^{2}}{\left(
B^{2}-1\right) ^{2}}\right) -B^{-\left( P-1\right)
J_{L}}\sum_{j=1}^{J_{L}}B^{\left( P+1\right) j}\log ^{2}B^{j}\right) \\
&=&\frac{B^{2}\log ^{2}B}{\left( B^{2}-1\right) }B^{2J_{L}}\left(
J_{L}^{2}\left( \frac{1}{\left( B^{\left( P+1\right) }-1\right) }\right) -2%
\frac{1}{\left( B^{2}-1\right) }J_{L}\left( \frac{\left( B^{2+\left(
P+1\right) }-1\right) }{\left( B^{\left( P+1\right) }-1\right) ^{2}}\right)
\right. \\
&&\left. +\left( \frac{B^{2}+1}{\left( B^{2}-1\right) ^{2}}\frac{W_{p}\left(
B\right) }{\left( B^{P+1}-1\right) ^{3}}\right) \right) +o\left(
B^{2J_{L}}\right) \\
&=&\frac{B^{2}\log ^{2}B}{\left( B^{2}-1\right) }B^{2J_{L}}\left( \widetilde{%
\tau }_{0}J_{L}^{2}-2\frac{\widetilde{\tau }_{1}}{\left( B^{2}-1\right) }%
J_{L}+\frac{B^{2}+1}{\left( B^{2}-1\right) ^{2}}\widetilde{\tau }_{2}\right)
+o\left( B^{2J_{L}}\right)
\end{eqnarray*}%
where%
\begin{equation*}
W_{p}\left( B\right) =\frac{\left( B^{6}B^{P-1}\left( B^{\left( P-1\right)
}+1\right) +B^{4}B^{\left( P-1\right) }\left( B^{3}B^{P}-6\right)
+B^{2}\left( B^{\left( P-1\right) }+1\right) +1\right) \allowbreak }{B^{2}+1}%
\text{ .}
\end{equation*}

On the other hand we have 
\begin{equation*}
\sum_{j=J_{0}}^{-1}B^{2j}\log ^{2}B^{j}\sum_{\Delta j=J_{0}-j}^{-1}B^{\Delta
j\left( P+1\right) }=o\left( B^{2J_{L}}\right) \text{ ,}
\end{equation*}%
so that%
\begin{equation*}
\sum_{j=J_{0}}^{J_{L}}B^{2j}\log ^{2}B^{j}\sum_{\Delta
j=-J_{0}-j}^{J_{L}-j}B^{\Delta j}\tau _{B}\left( \Delta j\right)
\end{equation*}%
\begin{equation*}
=\frac{B^{2}\log ^{2}B}{\left( B^{2}-1\right) }B^{2J_{L}}\left( \left( 1+%
\widetilde{\tau }_{0}\right) J_{L}^{2}-\frac{2\left( 1+\widetilde{\tau }%
_{1}\right) }{\left( B^{2}-1\right) }J_{L}+\frac{B^{2}+1}{\left(
B^{2}-1\right) ^{2}}\left( 1+\widetilde{\tau }_{2}\right) \right) +o\left(
B^{2J_{L}}\right) \text{ .}
\end{equation*}%
Hence we have, from Corollary \ref{cor28}, that 
\begin{eqnarray*}
Z_{J_{L}} &=&\frac{B^{4}\log ^{2}B}{\left( B^{2}-1\right) ^{2}}%
B^{4J_{L}}\left( \left( \left( 1+\widetilde{\tau }_{0}\right) J_{L}^{2}-%
\frac{2\left( 1+\widetilde{\tau }_{1}\right) }{\left( B^{2}-1\right) }J_{L}+%
\frac{B^{2}+1}{\left( B^{2}-1\right) ^{2}}\left( 1+\widetilde{\tau }%
_{2}\right) \right) \left( 1+\widetilde{\tau }_{0}\right) \right. \\
&&\left. -\left( J_{L}\left( 1+\widetilde{\tau }_{0}\right) -\frac{1}{B^{2}-1%
}\left( 1+\widetilde{\tau }_{1}\right) \right) ^{2}\right) \\
&=&\frac{B^{6}\log ^{2}B}{\left( B^{2}-1\right) ^{4}}B^{4J_{L}}\left(
1+\left( \frac{\left( B^{2}+1\right) }{B^{2}}\left( \widetilde{\tau }_{0}+%
\widetilde{\tau }_{2}+\widetilde{\tau }_{0}\widetilde{\tau }_{2}\right) +%
\frac{2\widetilde{\tau }_{1}-\widetilde{\tau }_{1}^{2}}{B^{2}}\right) \right)
\\
&=&\frac{B^{6}\log ^{2}B}{\left( B^{2}-1\right) ^{4}}B^{4J_{L}}\left( 1+%
\widetilde{\tau }\right) \text{ ,}
\end{eqnarray*}%
as claimed.
\end{proof}

\section{Auxiliary Results: Cumulants}

\begin{lemma}
\label{cumulants}Let $A_{j}$ and $B_{j}$ be defined as in (\ref{Aj_cum}) and
(\ref{Bj_cum}). As $J_{L}\rightarrow \infty $%
\begin{equation*}
\frac{1}{B^{4J_{L}}}cum\left\{
\sum_{l_{1}}(A_{j_{1}}+B_{j_{1}}),\sum_{l_{2}}(A_{j_{2}}+B_{j_{2}}),%
\sum_{l_{3}}(A_{j_{3}}+B_{j_{3}}),\sum_{l_{4}}(A_{j_{4}}+B_{j_{4}})\right\}
\end{equation*}%
\begin{equation*}
=O_{J_{L}}\left( \frac{J_{L}^{4}\log ^{4}B}{B^{2J_{L}}}\right) \text{ .}
\end{equation*}
\end{lemma}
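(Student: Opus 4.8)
The plan is to exploit the fact that $\sum_{j=J_0}^{J_L}(A_j+B_j)$ is, up to an additive constant, a linear combination of the independent random variables $X_l:=\sum_{m=-l}^{l}|a_{lm}|^2$, $l\ge1$, so that its fourth cumulant collapses to a single sum over $l$. First, using the nearly--tight--frame identification $\sum_{k}\beta_{jk;p}^2\simeq\widehat\Lambda_{j;p}=\sum_{l\ge1}f_p^2(l/B^{j})(2l+1)\widehat C_l=\sum_{l\ge1}f_p^2(l/B^{j})X_l$, exactly as in Lemma~\ref{mexicangavarini} and \cite{mayeli}, together with $\widehat G_{J_L}(\alpha_0)=(\sum_{j'}N_{j'})^{-1}\sum_{j'}\widehat\Lambda_{j';p}/K_{j'}^{M}(\alpha_0)$, I would write
\begin{equation*}
\sum_{j=J_0}^{J_L}(A_j+B_j)=\sum_{l\ge1}W_l\,(X_l-\mathbb{E}X_l)+\text{const},
\end{equation*}
with
\begin{equation*}
W_l=\sum_{j=J_0}^{J_L}\frac{B^{2j}\log B^{j}\,f_p^2(l/B^{j})}{N_jG_0K_j^{M}(\alpha_0)}+\frac{\sum_{j=J_0}^{J_L}B^{2j}\log B^{j}}{G_0\sum_{j'=J_0}^{J_L}N_{j'}}\sum_{j'=J_0}^{J_L}\frac{f_p^2(l/B^{j'})}{K_{j'}^{M}(\alpha_0)} .
\end{equation*}
Recall $X_l\overset{d}{=}C_l\chi^2_{2l+1}$, the $X_l$ being independent across $l$.

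Next comes the cumulant reduction. Joint cumulants are multilinear and additive over independent summands, and a joint cumulant of four random variables vanishes as soon as the arguments split into two independent groups; hence only the diagonal contribution survives and, using $\kappa_4(\chi^2_\nu)=2^{3}\,3!\,\nu=48\nu$,
\begin{equation*}
\kappa_4\!\Big(\sum_{j=J_0}^{J_L}(A_j+B_j)\Big)=\sum_{l\ge1}W_l^{4}\,\kappa_4(X_l)=48\sum_{l\ge1}W_l^{4}\,(2l+1)C_l^{4} .
\end{equation*}
So the whole statement reduces to a size estimate for $\sum_l W_l^{4}(2l+1)C_l^{4}$.

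Finally, I would bound $W_l$ uniformly in $l$. Using $N_j=c_BB^{2j}$, Proposition~\ref{propsumKj} (so $K_j^{M}(\alpha_0)\asymp B^{-\alpha_0 j}$) and Proposition~\ref{prop27} (so $\sum_jB^{2j}\log B^{j}\asymp J_L\log B\,B^{2J_L}$ and $\sum_{j'}N_{j'}\asymp B^{2J_L}$), both contributions to $W_l$ become, up to constants, sums of the shape $\sum_j\phi(j)\,(l/B^{j})^{4p}e^{-2(l/B^{j})^{2}}B^{\alpha_0 j}$ with $\phi(j)\in\{1,\;j\log B\}$. Setting $x_j=l/B^{j}$ each summand equals $l^{\alpha_0}\phi(j)\,x_j^{4p-\alpha_0}e^{-2x_j^{2}}$; since $\{x_j\}$ is a geometric progression and the profile $x^{4p-\alpha_0}e^{-2x^{2}}$ (with $4p>\alpha_0$) is concentrated near $x\asymp1$, with Gaussian decay for $x\gg1$ and $x^{4p-\alpha_0}$ decay for $x\ll1$, the $j$--sum is controlled by the $O(1)$ resolution levels with $B^{j}\asymp l$ plus the geometrically summable tail $B^{j}\gg l$. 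This yields $W_l=O(J_L\log B\cdot l^{\alpha_0})$ for $1\le l\lesssim B^{J_L}$, whereas for $l\gg B^{J_L}$ the Gaussian factors render $W_l$ negligible. Inserting $C_l\asymp l^{-\alpha_0}$ from Condition~\ref{REGULNEED0},
\begin{equation*}
\sum_{l\ge1}W_l^{4}(2l+1)C_l^{4}\ \lesssim\ J_L^{4}\log^{4}B\sum_{l\lesssim B^{J_L}}(2l+1)\,l^{4\alpha_0}\,l^{-4\alpha_0}\ \asymp\ J_L^{4}\log^{4}B\;B^{2J_L},
\end{equation*}
and dividing by $B^{4J_L}$ gives the claimed $O_{J_L}(J_L^{4}\log^{4}B/B^{2J_L})$. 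The one genuine obstacle is this uniform estimate for $W_l$: pinning down the exact power $l^{\alpha_0}$ and the prefactor $J_L\log B$ requires the saddle--type bookkeeping of which levels $j$ dominate the $j$--sum (those with $B^{j}\asymp l$), plus a clean control of the tail $l\gg B^{J_L}$ via the Gaussian cutoff; the cumulant step is routine once independence of the $X_l$ is recorded. This bound is precisely what feeds the second--chaos fourth--moment criterion of \cite{nourdinpeccati} invoked afterwards.
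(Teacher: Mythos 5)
Your argument is correct, and it reaches the stated bound by a genuinely different organization of the same underlying ingredients. The paper first computes the joint fourth cumulant of the normalized level statistics, $C^{4}_{j_{1},j_{2},j_{3},j_{4}}=cum\bigl(\sum_{k}\beta_{j_{i}k;p}^{2}/(N_{j_{i}}G_{0}K_{j_{i}}^{M}(\alpha_{0}))\bigr)_{i=1,\dots,4}$, using the same diagonal-in-$l$ reduction (independence of the $\widehat{C}_{l}$ and $cum_{4}(\widehat{C}_{l})=O(l^{-3-4\alpha_{0}})$) to get $C^{4}\simeq O(B^{-6j})$ concentrated on coinciding resolution levels, and then splits the full cumulant into the five cases according to how many $A_{j}$'s and $B_{j}$'s appear, bounding each case by summing $\prod_{i}B^{2j_{i}}\log B^{j_{i}}$ against $C^{4}$; the pure-$A$ and pure-$B$ cases already give the $O(J_{L}^{4}\log^{4}B\,/B^{2J_{L}})$ rate. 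You instead collapse the whole statistic into a single linear functional $\sum_{l}W_{l}X_{l}$ of the independent variables $X_{l}\sim C_{l}\chi^{2}_{2l+1}$ (absorbing the $\widehat{G}/G_{0}$ part of $B_{j}$ into the weight), so that additivity and fourth-order homogeneity of cumulants give $48\sum_{l}W_{l}^{4}(2l+1)C_{l}^{4}$ in one stroke, with no case split and no cross-level bookkeeping; the price is the uniform weight estimate $W_{l}=O(J_{L}\log B\;l^{\alpha_{0}})$ for $l\lesssim B^{J_{L}}$ (with Gaussian suppression beyond), which your localization argument around $B^{j}\asymp l$, using $4p-\alpha_{0}>0$ for the geometric tail $B^{j}\gg l$, does establish, and which reproduces exactly the paper's per-level order $B^{-6j}$ when specialized to a single $j$. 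Two minor points to record if you write this up: your reduction uses the identification $\sum_{k}\beta_{jk;p}^{2}\simeq\widehat{\Lambda}_{j;p}$, which the paper itself employs in Lemma \ref{mexicangavarini} and in its own cumulant computation, so you are not assuming more than the paper does; and the sign with which the $\widehat{G}$-term enters (whether $A_{j}+B_{j}$ or $A_{j}-B_{j}$) is immaterial, since you bound $|W_{l}|$ by the triangle inequality. Both routes feed the same fourth-moment/second-chaos criterion of \cite{nourdinpeccati}.
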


\begin{proof}
It is readily checked (see also \cite{dlm}) that%
\begin{equation*}
cum\left\{ \widehat{C}_{l},\widehat{C}_{l},\widehat{C}_{l},\widehat{C}%
_{l}\right\} =O\left( l^{-3}l^{-4\alpha _{0}}\right) \text{ .}
\end{equation*}%
Let us compute:%
\begin{equation*}
C_{j_{1},j_{2},j_{3}j_{4}}^{4}=cum\left( \frac{\sum_{k}\beta _{j_{1}k;p}^{2}%
}{N_{j_{1}}G_{0}K_{j_{1}}^{M}\left( \alpha _{0}\right) },\frac{\sum_{k}\beta
_{j_{2}k;p}^{2}}{N_{j_{2}}G_{0}K_{j_{2}}^{M}\left( \alpha _{0}\right) },%
\frac{\sum_{k}\beta _{j_{3}k;p}^{2}}{N_{j_{3}}G_{0}K_{j_{3}}^{M}\left(
\alpha _{0}\right) },\frac{\sum_{k}\beta _{j_{4}k;p}^{2}}{%
N_{j_{4}}G_{0}K_{j_{4}}^{M}\left( \alpha _{0}\right) }\right)
\end{equation*}%
\begin{equation*}
=\left( \prod_{i=1}^{4}\frac{1}{N_{j_{i}}G_{0}K_{j_{i}}^{M}\left( \alpha
_{0}\right) }\right) cum\left( \sum_{k}\beta _{j_{1}k;p}^{2},\sum_{k}\beta
_{j_{2}k;p}^{2},\sum_{k}\beta _{j_{3}k;p}^{2},\sum_{k}\beta
_{j_{4}k;p}^{2}\right)
\end{equation*}%
\begin{equation*}
=\sum_{l_{1},l_{2},l_{3},l_{4}}\left( \prod_{i=1}^{4}\frac{f_{p}^{2}\left( 
\frac{l_{i}}{B^{j_{i}}}\right) \left( 2l_{i}+1\right) }{%
N_{j_{i}}G_{0}K_{j_{i}}\left( \alpha _{0}\right) }\right) cum\left( \widehat{%
C}_{l_{1}},\widehat{C}_{l_{2}},\widehat{C}_{l_{3}},\widehat{C}_{l_{4}}\right)
\end{equation*}%
\begin{equation*}
=\sum_{l}\left( 2l+1\right) ^{4}\left( \prod_{i=1}^{4}\frac{f_{p}^{2}\left( 
\frac{l_{i}}{B^{j_{i}}}\right) }{N_{j_{i}}G_{0}K_{j_{i}}\left( \alpha
_{0}\right) }\right) cum\left( \widehat{C}_{l},\widehat{C}_{l},\widehat{C}%
_{l},\widehat{C}_{l}\right) +o\left( B^{-4j}\right)
\end{equation*}%
\begin{equation*}
=O\left( \sum_{l}\left( \prod_{i=1}^{4}B^{\left( \alpha _{0-}2\right)
j_{i}}f_{p}^{2}\left( \frac{l_{i}}{B^{j_{i}}}\right) \right) B^{\left(
2-4\alpha _{0}\right) j}\left( l^{1-4\alpha _{0}}\right) \right)
\end{equation*}%
\begin{equation*}
=O\left( B^{-6j}\prod_{i=1}^{4}\delta _{j}^{j_{i}}\right) \text{ .}
\end{equation*}%
Then we have%
\begin{eqnarray*}
&&cum\left\{ \frac{\widehat{G}_{J_{0},J_{L}}^{M}(\alpha _{0})}{G_{0}},\frac{%
\widehat{G}_{J_{0},J_{L}}^{M}(\alpha _{0})}{G_{0}},\frac{\widehat{G}%
_{J_{0},J_{L}}^{M}(\alpha _{0})}{G_{0}},\frac{\widehat{G}_{J_{0},J_{L}}^{M}(%
\alpha _{0})}{G_{0}}\right\} \\
&=&O\left( \frac{1}{B^{8J_{L}}}%
\sum_{j_{1}j_{2}j_{3}j_{4}}N_{j_{1}}N_{j_{2}}N_{j_{3}}N_{j_{4}}C_{j_{1},j_{2},j_{3}j_{4}}^{4}\right)
\\
&=&O\left( \frac{1}{B^{8J_{L}}}\sum_{j}B^{2j}\right) =O\left(
B^{-6J_{L}}\right) \text{ .}
\end{eqnarray*}%
As in \cite{dlm} and \cite{dlm2}, the proof can be divided into 5 cases,
corresponding respectively to 
\begin{equation*}
\frac{1}{B^{4J_{L}}}cum\left\{
\sum_{j_{1}}A_{j_{1}},\sum_{j_{2}}A_{j_{2}},\sum_{j_{3}}A_{j_{3}},%
\sum_{j_{4}}A_{j_{4}}\right\} ,\frac{1}{B^{4J_{L}}}cum\left\{
\sum_{j_{1}}B_{j_{1}},\sum_{j_{2}}B_{j_{2}},\sum_{j_{3}}B_{j_{3}},%
\sum_{j_{4}}B_{j_{4}}\right\}
\end{equation*}%
\begin{equation*}
\frac{1}{B^{4J_{L}}}cum\left\{
\sum_{j_{1}}A_{j_{1}},\sum_{j_{2}}B_{j_{2}},\sum_{j_{3}}B_{j_{3}},%
\sum_{j_{4}}B_{j_{4}}\right\} ,\frac{1}{B^{4J_{L}}}cum\left\{
\sum_{j_{1}}A_{j_{1}},\sum_{j_{2}}A_{j_{2}},\sum_{j_{3}}B_{j_{3}},%
\sum_{j_{4}}B_{j_{4}}\right\}
\end{equation*}%
and%
\begin{equation*}
\frac{1}{B^{4J_{L}}}cum\left\{
\sum_{j_{1}}A_{j_{1}},\sum_{j_{2}}A_{j_{2}},\sum_{j_{3}}A_{j_{3}},%
\sum_{j_{4}}B_{j_{4}}\right\} \text{ ,}
\end{equation*}%
where we have used \ref{Aj_cum}, \ref{Bj_cum}. We have for instance%
\begin{eqnarray*}
&&\frac{1}{B^{4J_{L}}}cum\left\{
\sum_{j_{1}}A_{j_{1}},\sum_{j_{2}}A_{j_{2}},\sum_{j_{3}}A_{j_{3}},%
\sum_{j_{4}}A_{j_{4}}\right\} \\
&=&O\left( \frac{1}{B^{4J_{L}}}\sum_{j_{1},j_{2}j_{3},j_{4}}\prod_{i=1}^{4}%
\left( B^{2j_{i}}\log B^{j_{i}}\right) C_{j_{1},j_{2},j_{3}j_{4}}^{4}\right)
\\
&=&O\left( \frac{1}{B^{4J_{L}}}\sum_{j}B^{8j}\log ^{4}B^{j}B^{-6j}\right) =O(%
\frac{1}{B^{4J_{L}}}\sum_{j}\log ^{4}B^{j}B^{2j})=O(\frac{\log ^{4}B^{J_{L}}%
}{B^{2J_{L}}})\text{ ;}
\end{eqnarray*}%
and%
\begin{eqnarray*}
&&\frac{1}{B^{4J_{L}}}cum\left\{
\sum_{j_{1}}B_{j_{1}},\sum_{j_{2}}B_{j_{2}},\sum_{j_{3}}B_{j_{3}},%
\sum_{j_{4}}B_{j_{4}}\right\} \\
&=&\frac{1}{B^{4J_{L}}}\left\{ \sum_{j}B^{2j}\log B^{j}\right\}
^{4}cum\left\{ \frac{\widehat{G}_{J_{L}}(\alpha _{0})}{G_{0}},\frac{\widehat{%
G}_{J_{L}}(\alpha _{0})}{G_{0}},\frac{\widehat{G}_{J_{L}}(\alpha _{0})}{G_{0}%
},\frac{\widehat{G}_{J_{L}}(\alpha _{0})}{G_{0}}\right\} \\
&=&O\left( \log ^{4}B^{J_{L}}B^{-2J_{L}}\right) \text{ ;}
\end{eqnarray*}%
The proof for the remaining terms is entirely analogous, and hence
omitted.\bigskip
\end{proof}

\end{document}